\newtheorem{theorem}{Theorem}[section]
\newtheorem{lemma}[theorem]{Lemma}
\newtheorem{corollary}[theorem]{Corollary}
\newtheorem{definition}[theorem]{Definition}
\newtheorem{defn}[theorem]{Definition}
\newtheorem{prop}[theorem]{Proposition}
\newtheorem{thm}[theorem]{Theorem}
\newtheorem{cor}[theorem]{Corollary}
\theoremstyle{definition}
\newtheorem{remark}[theorem]{Remark}
\newcommand\qc{\mathop{\rm qc}}
\newcommand\q{\mathop{\rm q}}
\newcommand\qs{\mathop{\rm qs}}
\newcommand\vect{\mathop{\rm vect}}
\newcommand\f{\mathop{\rm f}}
\newcommand\qa{\mathop{\rm qa}}
\newcommand\tr{\mathop{\rm tr}}
\newcommand\cc{\mathop{\rm qc}}
\newcommand\loc{\mathop{\rm loc}}
\newcommand\xx{\mathop{\rm x}}
\newcommand{\cl}[1]{\mathcal{#1}}
\newcommand{\bb}[1]{\mathbb{#1}}
\newcommand{\abs}[1]{\ensuremath{{\lvert#1\rvert}}}
\newcommand{\Tr}[0]{\ensuremath{\textnormal{Tr}}}
\newcommand{\ot}[0]{\otimes}
\newcommand{\Gc}[0]{{\overline{G}}}
\newcommand{\thpbar}[0]{{\overline{\vartheta}^+}}
\newcommand{\xisdp}[0]{{\xi_{\textnormal{SDP}}}}
\newcommand{\homL}[0]{\stackrel{\loc}{\to}}
\newcommand{\homX}[0]{\stackrel{{\rm x}}{\to}}
\newcommand{\homq}[0]{\stackrel{q}{\to}}
\newcommand{\myvec}[1]{\mathbf{#1}}
\newcommand{\evec}[0]{\myvec{e}}
\newcommand{\djp}[0]{*}
\begin{document}

\title[Quantum chromatic numbers]{Estimating quantum chromatic numbers}

\author[V.~I. Paulsen]{Vern I. Paulsen}
\address{Department of Mathematics, University of Houston,
Houston, Texas 77204-3476, U.S.A.}
\email{vern@math.uh.edu}

\author[S.~Severini]{Simone Severini}
\address{Department of Computer Science, University College London,
Gower Street, London WC1E 6BT, United Kingdom}
\email{s.severini@ucl.ac.uk}

\author[D.~Stahlke]{Daniel Stahlke}
\address{Department of Physics, Carnegie Mellon University, Pittsburgh, Pennsylvania 15213, USA}
\email{dan@stahlke.org}

\author[I.~G. Todorov]{Ivan G. Todorov}
\address{Pure Mathematics Research Centre, Queen's University Belfast, Belfast BT7 1NN, United Kingdom}
\email{i.todorov@qub.ac.uk}

\author[A.~Winter]{Andreas Winter}
\address{ICREA and F\'{\i}sica Te\`{o}rica: Informaci\'{o} i Fenomens Qu\`{a}ntics, Universitat Aut\`{o}noma de Barcelona, ES-08193 Bellaterra (Barcelona), Spain}
\email{andreas.winter@uab.cat}

\thanks{This work supported in part by NSF (USA), EPSRC (United Kingdom), the Royal Society, the European Commission (STREP ``RAQUEL''), the ERC (Advanced Grant ``IRQUAT''), the Spanish MINECO (project FIS2008-01236, with the support of FEDER funds), and the Isaac Newton Institute for Mathematical Sciences during the semester \emph{Mathematical Challenges in Quantum Information}, Aug-Dec 2014.}

\keywords{operator system, tensor product, chromatic number}
\subjclass[2010]{Primary 46L07, 05C15; Secondary 47L25, 81R15}

\date{10 September 2015}

\begin{abstract}
We develop further the new versions of quantum chromatic numbers of
graphs introduced by the first and fourth authors.
We prove that the problem of computation of the commuting quantum chromatic number
of a graph is solvable by an SDP  algorithm and describe an hierarchy of variants of the
commuting quantum chromatic number which converge to it.
We introduce the tracial rank of a graph, a parameter that gives a lower bound for the commuting
quantum chromatic number and parallels the projective rank, and prove that it is multiplicative.
We describe the tracial rank, the projective rank and the fractional chromatic numbers in a
unified manner that clarifies their connection with the commuting quantum chromatic number,
the quantum chromatic number and the classical chromatic number, respectively.
Finally, we present a new SDP algorithm that yields a parameter
larger than the Lov\'asz number and is yet a lower bound for the tracial rank of the graph.
We determine the precise value of the tracial rank of an odd cycle.
\end{abstract}

\maketitle

\section{Introduction}

We assume that the reader is familiar with some concepts from graph theory and refer the reader to the text \cite{gr} for any terminology that we do not explain.

In \cite{galliardwolf,AHKS06,cnmsw,SS12} the concept of the quantum chromatic number $\chi_{\q}(G)$
of a graph $G$ was developed and inequalities for estimating this parameter,
as well as methods for its computation, were presented.
In \cite{pt_chrom} several new variants of the quantum chromatic number, 
denoted $\chi_{\qc}(G)$, $\chi_{\qa}(G)$ and $\chi_{\qs}(G)$, were introduced,
as well as $\chi_{\vect}(G)$.  
The motivation behind them came from 
conjectures of Tsirelson and Connes and the fact that the set of correlations of 
quantum experiments may possibly depend on which set of quantum mechanical axioms one 
chooses to employ. Given a graph $G$, the aforementioned chromatic numbers satisfy the inequalities
\[
  \chi_{\vect}(G) \le \chi_{\qc}(G) \le \chi_{\qa}(G) \le \chi_{\qs}(G) \le \chi_{\q}(G) \le \chi(G),
\]
where $\chi(G)$ denotes the classical chromatic number of the graph $G$.

The motivation of \cite{pt_chrom} for defining and studying 
these new chromatic numbers comes from the fact that 
if Tsirelson's conjecture is true, then $\chi_{\qc}(G) = \chi_{\q}(G)$ for every graph $G$, while if 
Connes' Embedding Problem has an affirmative answer, 
then $\chi_{\qc}(G) = \chi_{\qa}(G)$ for every graph $G$.  
Thus, computing these invariants gives a means to test the corresponding conjectures.

In \cite{cmrssw} it was shown that
\[ \lceil \vartheta^+(\overline{G}) \rceil = \chi_{\vect}(G),\]
where $\lceil r \rceil$ denotes the least integer greater than or equal to
$r$ and $\vartheta^+$ is
Szegedy's~\cite{365707} variant of Lov\'asz's~\cite{lo} $\vartheta$-function. 
Furthermore, this identity was used to give the first example of a graph for which 
$\chi_{\vect}(G) \ne \chi_{\q}(G)$.  Also, since $\vartheta^+$ is defined by an SDP, 
the aforementioned result shows that $\chi_{\vect}(G)$ is computable by an SDP.

In this paper we show that for each size of graph,
$\chi_{\qc}(G)$ is also computable by an SDP.
Our proof builds on ideas borrowed from the \lq\lq NPA hierarchy'' 
exhibited in \cite{npa}. It uses a compactness argument to show that for the 
purposes of computing this integer the hierarchy terminates, but does not yield 
the stage at which it does so. 
Thus, while we can say that it is computable by one of the SDP's in the hierarchy, 
we cannot explicitly determine the size of this SDP.  
It is known that $\chi(G)$ is computable by an SDP, but it is still not known if 
the same is true for $\chi_{\qa}(G), \chi_{\qs}(G)$ and $\chi_{\q}(G)$. 
If the Tsirelson and Connes conjectures hold true 
then these three quantities must also be computable by SDP's as they
then are all equal to $\chi_{\qc}(G)$.

D.~Roberson and L.~Man\v{c}inska~\cite{arxiv:1212.1724} introduced a Hilbert 
space variant of the fractional chromatic number $\chi_{\f}(G)$ of a graph $G$, called the 
projective rank, and denoted $\xi_{\f}(G)$. 
They proved that $\xi_{\f}(G)$ is a lower bound for $\chi_{\q}(G)$;
this estimate was critical for identifying a graph $G$ with 
$\chi_{\vect}(G) \ne \chi_{\q}(G)$.  
However, it is still not known if $\xi_{\f}(G)$ is a lower bound for 
the variants of the quantum chromatic number studied in \cite{pt_chrom}.

In the present paper, we introduce a new variant of the projective rank, which we call 
\emph{tracial rank} and show that it is a lower bound for $\chi_{\qc}(G)$. 
We also give a new interpretation of the projective rank in terms of traces on finite dimensional C*-algebras. 
These parameters and their properties allow us to give the first example of a graph for which 
$\chi_{\vect}(G) \ne \chi_{\qc}(G)$.

Finally, we present a new SDP that yields a parameter larger than Szegedy's bound 
(i.e., it is an \emph{SDP relaxation} of the latter), and which is still a lower 
bound on $\chi_{\qc}(G)$. En route, we show that our tracial rank is multiplicative. 

To put our work into a broader context, recently a number of graph parameters, including clique, chromatic, and independence numbers,
have been generalized to \lq\lq quantum'' versions by relating the respective
number to attaining maximum probability $1$ in a so-called non-local game.
The present work shows that some of the techniques to bound and separate these
numbers, originally developed for the exact, finite-dimensional and tensor
product case, can be extended to the approximate and relativistic setting,
and that they can be bounded by SDP's without going directly to the NPA hierarchy.

\section{Quantum correlations and variants of the quantum chromatic number}
\label{s_qdm}
In this section we summarize the properties of some of the variants of the quantum chromatic number 
introduced in \cite{pt_chrom} from the viewpoint of correlations and derive a few 
additional properties of the corresponding models that will be essential later.
We introduce a new notation that we hope serves to clarify and unify many of the ideas from \cite{pt_chrom}.

Let $G = (V,E)$ be a graph; here, $V = V(G)$ is the set of vertices, while 
$E = E(G)$ is the set of edges, of $G$. If $(v,w)\in E$, we write $v\sim w$. 
In \cite{AHKS06} and \cite{cnmsw}, the authors considered a \lq\lq graph colouring game'', 
where two players, Alice and Bob, try to convince a referee that they have a colouring of $G$;
the referee inputs a pair $(v,w)$ of vertices of $G$, and each of the players
produces an output, according to a
previously agreed \lq\lq quantum strategy'', that is, a probability distribution
derived from an entangled state and collections of positive operator-valued measures, POVM's.
To formalise this, recall that a POVM is a collection $(E_i)_{i=1}^k$ of positive operators
acting on a Hilbert space $H$ with $\sum_{i=1}^k E_i = I$ (as usual, here we denote by $I$
the identity operator). If, in addition, each $E_i$ is a projection, then the collection is called a projection-valued measure, or PVM.  When $H = \bb{C}^p$ is finite dimensional,
we identify the operators on $H$ with elements of the algebra $M_p$ of all
$p \times p$ complex matrices.
Given POVM's $(E_{v,i})_{i=1}^c \subseteq M_p$ and
$(F_{w,j})_{j=1}^c \subseteq M_q$, where $v,w \in V$, and
a unit vector $\xi\in \bb{C}^p\otimes\bb{C}^q$, one associates with each pair
$(v,w)$ of vertices of $G$ the probability distribution
\begin{equation}
  \label{eq_qq}
  p(i,j|v,w)=\langle (E_{v,i} \otimes F_{w,j}) \xi, \xi \rangle, \, 1 \le i,j \le c,
\end{equation}
where,
for an input $(v,w)$ from the referee, $\langle (E_{v,i} \otimes F_{w,j}) \xi, \xi \rangle$
is the probability for Alice producing an output $i$ and Bob -- an output $j$.

The set of all $nc \times nc$ matrices that are obtained by allowing $p$ and $q$ to vary through all the natural numbers, is called the set of
\emph{quantum correlations} and is generally denoted $Q(n,c)$; when $n$ and $c$ 
are clear from the context, we simply write $Q$. For reasons that will be clear shortly, we shall adopt the notation:  $C_{\rm q}(n,c) = Q(n,c).$

Alice and Bob can thus convince the referee that they have a $c$-colouring if the
following conditions are satisfied:
\begin{multline}\label{qcdefn}
\forall v, \forall i \ne j, \langle (E_{v,i} \otimes F_{v,j}) \xi, \xi \rangle =0,\\
\forall (v,w) \in E, \forall i, \langle (E_{v,i} \otimes F_{w,i}) \xi, \xi \rangle =0.
\end{multline}
If this happens, we say that the graph $G$ admits a \emph{quantum $c$-colouring};
the smallest positive integer $c$ for which $G$ admits a quantum $c$-colouring was
called in \cite{cnmsw} the \emph{quantum chromatic number} of $G$ and denoted by $\chi_{\q}(G)$.

One can interpret the quantum chromatic number in terms of a linear functional on the set of quantum correlations.

\begin{defn} Let $G=(V,E)$ be a graph,
let $|V|=n$ and fix $c \in \bb N$. The graph functional, $L_{G,c}: M_{nc} \to \bb C$ is defined by
\[ L_{G,c}\big( (a_{v,i,w,j}) \big) = \sum_{i \ne j, v} a_{v,i,v,j} + \sum_{i, v\sim w} a_{v,i,w,i}. \]
\end{defn}

\begin{prop}\label{p_LGc}
Let $G$ be a graph on $n$ vertices. Then 
$\chi_{\q}(G) \le c$ if and only if there exists $A \in C_{\rm q}(n,c)$ such that $L_{G,c}(A) = 0$.
\end{prop}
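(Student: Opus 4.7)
The plan is to verify the equivalence by unpacking definitions, with the crucial observation being that every entry of a matrix $A \in C_{\rm q}(n,c)$ is a probability, hence non-negative. Since $L_{G,c}(A)$ is a sum of such non-negative entries, $L_{G,c}(A) = 0$ is equivalent to each summand being zero.

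For the forward direction, I would assume $\chi_{\q}(G) \le c$ and pick POVM's $(E_{v,i})_{i=1}^c \subseteq M_p$ and $(F_{w,j})_{j=1}^c \subseteq M_q$ together with a unit vector $\xi \in \mathbb{C}^p \otimes \mathbb{C}^q$ witnessing a quantum $c$-colouring as in (\ref{qcdefn}). Forming the matrix $A = (a_{v,i,w,j})$ with entries $a_{v,i,w,j} = \langle (E_{v,i} \otimes F_{w,j})\xi, \xi\rangle$ as in (\ref{eq_qq}) places $A$ in $C_{\rm q}(n,c) = Q(n,c)$. The two families of vanishing conditions in (\ref{qcdefn}) are precisely the statements $a_{v,i,v,j} = 0$ for $i \ne j$ and $a_{v,i,w,i} = 0$ for $v \sim w$; summing them yields $L_{G,c}(A) = 0$.

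For the converse, I would take $A \in C_{\rm q}(n,c)$ with $L_{G,c}(A) = 0$ and use the same representation: by definition of $C_{\rm q}(n,c)$, there exist POVM's and a unit vector so that $A$ has the form (\ref{eq_qq}), in particular each entry $a_{v,i,w,j}$ is a probability and therefore non-negative. Since $L_{G,c}(A)$ is a finite sum of non-negative quantities equal to $0$, every summand must vanish, which recovers (\ref{qcdefn}) exactly; thus the chosen data witness a quantum $c$-colouring of $G$, giving $\chi_{\q}(G) \le c$.

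I expect essentially no real obstacle: the proposition is a reformulation of the definition of $\chi_{\q}(G)$ as a linear constraint on a point in the quantum correlation set. The only substantive point to emphasise is the non-negativity of entries in $C_{\rm q}(n,c)$ (allowing a sum-to-zero to force each term to be zero), since without this the condition $L_{G,c}(A) = 0$ would not encode the conjunction of the vanishing probabilities in (\ref{qcdefn}).
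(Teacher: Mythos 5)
Your proof is correct and follows essentially the same route as the paper: observe that all entries of a quantum correlation are non-negative, so $L_{G,c}(A)=0$ holds precisely when every summand vanishes, which is exactly the set of conditions in (\ref{qcdefn}) defining a quantum $c$-colouring. The paper states this more tersely, but the content is identical.
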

\begin{proof} 
Since the entries of every correlation in $C_q(n,c)$ are non-negative, 
it follows that $L_{G,c}(A) =0$ if and only if $a_{v,i,w,j} =0$ whenever $v=w$ and $i \ne j$ and 
whenever $(v,w)$ is an edge and $i=j$. Thus, there exists $A \in C_{\rm q}(n,c)$ 
precisely when conditions (\ref{qcdefn}) are satisfied.
\end{proof}

Several variants of the quantum chromatic number were introduced in \cite{pt_chrom}. We focus on two 
denoted by $\chi_{\qc}(G)$ and $\chi_{\qa}(G)$, and 
called the \emph{commuting quantum chromatic number}, and the 
\emph{approximate quantum chromatic number}, respectively. 
Both of them can be obtained as in Proposition \ref{p_LGc} by 
varying the sets of correlations that can be considered in place of $C_{\rm q}(n,c)$.

Let $C_{\rm qa}(n,c) :=C_{\rm q}(n,c)^-$ be the closure of $C_{\rm q}(n,c)$; we note that 
it is not known if $C_{\rm q}(n,c)$ is closed. 
In \cite{pt_chrom} it was shown that 
$C_{\rm qa}(n,c)$ can be identified with the state space of a certain minimal 
tensor product and that consequently, $\chi_{\qa}(G) = \chi_{\rm qmin}(G)$, 
where this latter quantity was originally given a different definition.

We let $C_{\cc}(n,c)$ denote the set of correlations 
obtained using relativistic quantum field theory. 
To be precise, instead of assuming that the POVM's $(E_{v,i})$ and $(F_{w,j})$ 
act on two finite dimensional Hilbert spaces and forming the tensor product of those spaces, 
we assume instead that they act on a common, possibly infinite dimensional, Hilbert space 
and that the $E$'s and $F$'s mutually commute. 
Thus, $C_{\cc}(n,c)$ is the set of all $nc \times nc$ matrices of the form
\[ p(i,j|v,w) = \big( \langle E_{v,i}F_{w,j} \xi, \xi \rangle \big)_{v,i,w,j},\]
where $E_{v,i}, F_{w,j} $ are positive operators on a Hilbert space $\cl H$ satisfying

\begin{multline}\label{commprop}
\sum_{i=1}^c E_{v,i} = \sum_{j=1}^c F_{w,j} = I, \, \forall \, v,w,\\
E_{v,i}F_{w,j} = F_{w,j}E_{v,i}, \, \forall \, v,w,i,j,
\end{multline}
and $\xi \in \cl H$ is a unit vector.

As in the finite dimensional case, by enlarging the Hilbert space,  
one may assume without loss of generality, 
that all of these operators are orthogonal projections (see \cite[Theorem 2.9]{pt_chrom}
and Theorem \ref{findimrealisation} below).

Finally,  ${\rm Loc}(n,c)$ denotes the 
set of all \emph{classical correlations}, also called \emph{local correlations}, that is, the 
set of matrices $(p(i,j|v,w))_{v,i,w,j}$ 
which are in the closed convex hull of the product distributions $p(i,j|v,w) = p^1(i|v)p^2(j|w)$ where  $p^1(i|v) \ge 0$ and $\sum_{i=1}^c p(i|v) =1$ is a set of $c$-outcome probability distributions indexed by $v \in V$ with $|V|=n$ and, similarly, $p^2(j|w)$ is a set of $c$-outcome probability distributions indexed by $w \in V$. Since both of these sets of probability distributions define compact sets in $\bb R^{nc}$ by Caratheodory's theorem, given any element of ${\rm Loc}(n,c)$, there exist at most $n^2c^2+1$ probability distributions $(p_k^1(i|v))_{i=1}^c$, $v\in V$ 
(resp. $(p_k^2(j|w))_{i=1}^c$, $w\in V$), and non-negative scalars $\lambda_k$ 
such that 
\begin{equation}\label{eq_localco}
  p(i,j|v,w) = \sum_{k} \lambda_k p_k^1(i|v)p_k^2(j|w), \ \ i,j=1,\dots,c, \ v,w\in V.
\end{equation}
For consistency of notation, we set $C_{\rm loc}(n,c) := {\rm Loc}(n,c).$

There is another useful characterisation of the set $C_{\rm loc}(n,c)$, discussed in \cite[p. 5]{pt_chrom}. Let $\cl D$ denote the tensor product of $n$ copies of the abelian C*-algebra $\ell^{\infty}_c$, 
which is *-isomorphic to the space of all (continuous) functions on $nc$ points and set $e_{v,i}= 1 \otimes \cdots 1 \otimes e_i \otimes 1 \cdots 1,$ where $e_i$ denotes the $i$-th canonical basis vector for $\ell^{\infty}_c$ and it occurs in the $v$-th term of the tensor product, $1 \le v \le n$. Then $(p(i,j|v,w)) \in C_{loc}(n,c)$ if and only if there is a state $s: \cl D \otimes \cl D \to \bb C$ such that $p(i,j|v,w) = s(e_{v,i} \otimes e_{w,j}).$  To see this fact, note that in the above formula each $p_k^1(i|v)=s_k^1(e_{v,i})$ for a state $s_k^1: \cl D \to \bb C$, while $p_k^2(j|w)= s_k^2(e_{w,j}).$ Thus, (\ref{eq_localco}), for a typical element of $C_{\rm loc}(n,c)$, becomes $p(i,j|v,w) = \sum_k \lambda_k s_k^1 \otimes s_k^2(e_{v,i} \otimes e_{w,j}),$ which is a convex combination of product states.  The fact that convex combinations of product states yields all states on $\cl D \otimes \cl D$ follows from another application of Caratheodory's theorem.

It is easy to see that
\[
  C_{\rm loc}(n,c) \subseteq C_{\rm q}(n,c) \subseteq C_{\rm qa}(n,c) \subseteq C_{\cc}(n,c).
\]

Note that the correlations belonging to 
$C_{\rm loc}(n,c)$ can be realised as in (\ref{eq_qq}), 
but with the POVM's $(E_{v,i})_{i=1}^c$ and $(F_{w,j})_{j=1}^c$ consisting of 
mutually commuting operators. 

These various sets of correlations allowed \cite{pt_chrom} to generalise and unify the definitions of the  quantum chromatic number of a graph $G$ (on $n$ vertices) by setting $\chi_{\rm x}(G)$ equal to the least integer $c$ such that there
 exists $p(i,j|v,w) \in C_{\rm x}(n,c)$ satisfying:
 
 \begin{multline}\label{tdefn}
\forall v, \forall i \ne j, \,\, p(i,j|v,v) =0,\\
\forall (v,w) \in E, \forall i, \,\, p(i,i|v,w) =0,
\end{multline}
where ${\rm x} \in \{{\rm loc},{\rm q},{\rm qa},{\rm qc}\}.$
In \cite{cnmsw} it is shown that $\chi_{\rm loc}(G)$ is equal to the usual chromatic number of a graph, $\chi(G).$

Since $p(i,j|v,w) \ge 0$ for elements of each of these correlation sets, the proof of the following is identical to the proof of the last proposition:

\begin{prop}\label{p_chqaqc}
Let $G$ be a graph on $n$ vertices and let ${\rm x} \in \{{\rm loc},{\rm q},{\rm qa},{\rm qc}\}.$ 
Then $\chi_{\rm x}(G) \le c$ if and only if there exists $A \in C_{\rm x}(n,c)$ such that $L_{G,c}(A)=0$.
\end{prop}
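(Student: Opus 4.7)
The plan is to imitate the argument for Proposition \ref{p_LGc} essentially verbatim, because the only ingredient used there is that entries of a quantum correlation are non-negative. First I would verify this non-negativity separately for each of the four correlation sets. For $C_{\rm loc}(n,c)$ the entries are convex combinations of products of probabilities, hence non-negative. For $C_{\rm q}(n,c)$ each entry is $\langle(E_{v,i}\otimes F_{w,j})\xi,\xi\rangle$ with positive operators $E_{v,i},F_{w,j}$, hence non-negative; passing to the closure $C_{\rm qa}(n,c)$ preserves the inequality. For $C_{\cc}(n,c)$, the entry $\langle E_{v,i}F_{w,j}\xi,\xi\rangle$ involves a product of commuting positive operators, which is itself a positive operator, so the inner product is again non-negative.

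Once non-negativity is in hand, the proof reduces to the same one-line observation as in Proposition \ref{p_LGc}. The functional $L_{G,c}(A)$ is a sum of the diagonal-block entries $a_{v,i,v,j}$ with $i\ne j$ together with the entries $a_{v,i,w,i}$ with $v\sim w$. Each such entry equals a non-negative $p(i,j|v,w)$, so the equation $L_{G,c}(A)=0$ is equivalent to the vanishing of each such entry; this is precisely condition (\ref{tdefn}). Thus the existence of $A\in C_{\rm x}(n,c)$ with $L_{G,c}(A)=0$ is equivalent to the existence of a correlation in $C_{\rm x}(n,c)$ witnessing a $c$-colouring in the ${\rm x}$ sense, i.e.\ to $\chi_{\rm x}(G)\le c$.

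The main obstacle, if one can even call it one, is in the commuting case, where one must be careful to use that the product of two commuting positive operators is positive. The paper has already noted that the operators may be taken to be projections in this setting, which makes the argument even more transparent, since a product of commuting projections is again a projection. Beyond that, the proof is pure bookkeeping and is, as the authors remark, identical to the one given for Proposition \ref{p_LGc}.
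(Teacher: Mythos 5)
Your proof is correct and takes essentially the same route as the paper: the authors simply note that the entries of correlations in each of the four sets are non-negative and then invoke the one-line argument from Proposition~\ref{p_LGc}. Your careful verification of non-negativity case-by-case (including the observation that a product of commuting positive operators is positive, which handles $C_{\rm qc}$) is exactly the content the paper leaves implicit.
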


From the above containments and proposition, we immediately see \cite{pt_chrom} that
\[
  \chi_{\qc}(G) \le \chi_{\qa}(G) \le \chi_{\q}(G) \le \chi(G).
\]

\begin{remark} 
By Proposition \ref{p_chqaqc}, and the fact that  ${\rm Loc}(n,c)$ is compact and convex, each graph $G$ with 
$\chi_{\q}(G) =c < \chi(G)$ yields a graph functional $L_{G,c}$ 
that is strictly positive on the set $C_{\rm loc}(n,c)={\rm Loc}(n,c)$ of  classical correlations
and vanishes on the set $C_{\rm q}(n,c) = Q(n,c)$. Thus, each such graph gives a functional 
$L_{G,c}$ that gives a Bell-type inequality separating local from quantum.
\end{remark}

\begin{remark} By Proposition \ref{p_chqaqc}(1), determining if the minimum of $L_{G,c}$ over the polytope ${\rm Loc}(n,c)$ is 0, gives a LP to determine if $\chi(G)\le c$.
\end{remark}

\begin{remark} 
The statement $C_{\rm q}(n,c) = C_{\cc}(n,c)$ for all $n$ and $c$ is often referred to as 
the \emph{(strong bivariate) Tsirelson conjecture}. 
Thus, if the Tsirelson conjecture holds true, then necessarily, 
$\chi_{\rm c}(G) = \chi(G)$ for every graph $G$.  
In addition, work of N. Ozawa \cite{oz} shows that 
the Connes Embedding Conjecture is equivalent to the statement that 
$C_{\rm qa}(n,c) = C_{\cc}(n,c)$ for all $n$ and $c$. Thus, if the Connes Embedding Conjecture
holds true, then necessarily $\chi_{\qc}(G) = \chi_{\qa}(G)$ for every graph $G$.
We shall refer to the equality 
$C_{\rm qa}(n,c) = C_{\rm q}(n,c)$, $ \forall n,c\in \bb{N}$, as the \emph{closure conjecture}. 
Thus, if the closure conjecture is true, then $\chi_{\q}(G) = \chi_{\qa}(G)$ for every graph $G$.

Hence, determining if these chromatic numbers are always equal or 
can be separated would shed some light on these two conjectures. 
This was the original motivation  
for introducing these new parameters in \cite{pt_chrom}.
\end{remark}

\section{A hierarchy of chromatic numbers}
\label{s_qrc} 
In this section we revisit the ideas of the NPA hierarchy \cite{npa} 
and use them to construct a descending sequence of state spaces $C^N(n,c)$ that converges 
in an appropriate sense to $C_{\cc}(n,c)$.  
These will allow us to construct a sequence of ``chromatic'' numbers 
$\chi^N_{\qc}(G)$ each of which is computable by an SDP and which converges to 
$\chi_{\qc}(G)$.  To accomplish this, we need to review a certain C*-algebra 
intrinsic in the definition of $\chi_{\qc}(G)$.

Let $n,c\in \bb{N}$,
$F(n,c) = \bb Z_c * \cdots * \bb Z_c$ ($n$ copies), where $\bb Z_c = \{0,1,\dots,c-1\}$
is the cyclic group with $c$ elements, and
$C^*(F(n,c))$ is the (full) C*-algebra of $F(n,c)$.
The C*-algebra of $\bb{Z}_c$ is canonically *-isomorphic, via Fourier transform, to the (abelian) C*-algebra
$\ell^{\infty}_c = \{(\lambda_i)_{i=1}^c : \lambda_i\in \bb{C}, i = 1,\dots,c\}$.
Thus, $C^*(F(n,c))$ is canonically *-isomorphic to the free product  C*-algebra, amalgamated over the unit,
$\ell^{\infty}_c\ast_1\cdots\ast_1 \ell^{\infty}_c$ ($n$ copies).

We denote by $V$ the set $\{1,2,\dots,n\}$.
Let $e_{v,i}$ denote the element of
$C^*(F(n,c))$ that is in the $v$-th copy of $\ell^{\infty}_c$ and is the vector that is $1$ in the $i$-th component and $0$ elsewhere.
Alternatively, if we regard $F(n,c)$ as generated by unitaries, $u_v$ with $u_v^c=1$, then $e_{v,j}$ corresponds to the spectral projection of $u_v$ onto the eigenspace of $\omega^j$ where $\omega = e^{2 \pi i/c}.$ In particular, $e_{v,j}$ belongs to the group algebra of $F(n,c)$ and because $u_v = \sum_{i=1}^c \omega^i e_{v,i}$ the collection $\{ e_{v,i}: 1 \le i \le c, v \in V \}$ is another set of generators of the group algebra.

We let
$\cl S(n,c) = \ell^{\infty}_c \oplus_1 \cdots \oplus_1 \ell^{\infty}_c$
($n$ copies) be the corresponding operator system coproduct (see, {\it e.g.}, \cite{kavruk2011}).
By \cite{fkpt} or since each generator $u_v$ is in the span of $e_{v,i}, 1 \le i \le c,$ $\cl S(n,c)$ can be identified with the span of the generators of the group $F(n,c)$
inside the C*-algebra $C^*(F(n,c))$.
Then
\[
  \cl S(n,c) = {\rm span}\{e_{v,i} : v\in V, 1 \le i \le c\}.
\]

We note the relations
\begin{equation}\label{eq_rel}
e_{v,j}^2 = e_{v,j} = e_{v,j}^*, \ \ \ e_{v,i}e_{v,j}=0, i \ne j, \ \ \ \sum_{i=1}^c e_{v,i} = 1, \ v\in V, 1\leq j\leq c.
\end{equation}

Because the left regular representation of $F(n,c)$ is faithful on the group algebra, the C*-algebra $C^*(F(n,c))$ can thus be viewed as the universal C*-algebra generated by
the set $\cl E = \{e_{v,i} : v\in V, 1\leq i \leq c\}$ satisfying (\ref{eq_rel}). 

A \emph{word in $\cl E$} is an element of the form
\begin{equation}\label{eq_word}
\alpha = e_{v_1,i_1}e_{v_2,i_2}\cdots e_{v_k,i_k}
\end{equation}
where $v_j\in V$ and $1\leq i_j\leq c$, $j = 1,\dots,k$.
The length $|\alpha|$ of a word $\alpha$ is the smallest $k$ for which $w$ can be written in the form (\ref{eq_word}).
A \emph{polynomial of degree $k$} is an element $p$ of $C^*(F(n,c))$ of the form
$p = \sum_{j=1}^m \lambda_j \alpha_j$, where $\alpha_j$ are words, $\lambda_j\in \bb{C}$,
and ${\rm deg }(p) \stackrel{def}{=}\max_{j=1,\dots,m} |\alpha_j| = k$.

For a given $N\in \bb{N}$, let
\[
  \cl P_N = {\rm span}\{p : \mbox{ a polynomial with } {\rm deg}(p)\leq N\},
\]
and
\[
  \cl P = {\rm span}\{\cl P_N : N\in \bb{N}\}.
\]
We note that $\cl P_N$ is an operator subsystem of $C^*(F(n,c))$ and
that $\cl P_N\subseteq \cl P_{N+1}$, $N\in \bb{N}$.
We also note that $\cl P$ is a (dense) *-subalgebra of $C^*(F(n,c))$ and
hence possesses a canonical induced operator system structure.

Note that
\[
  C^*(F(n,c))\otimes_{\max} C^*(F(n,c)) = C^*(F(n,c)\times F(n,c)),
\]
up to a (canonical) *-isomorphism.
Let
\[
  \cl A = \cl P\otimes \cl P \subseteq C^*(F(n,c)\times F(n,c)).
\]
We have that $C^*(F(n,c)\times F(n,c))$ is the universal C*-algebra
generated by two families of elements,
$\cl E = \{e_{v,i} : v\in V, 1\leq i \leq c\}$ and $\cl F = \{f_{w,j} : w\in V, 1\leq j \leq c\}$,
each of which satisfies relations (\ref{eq_rel}), as well as the commutativity relations
\begin{equation}\label{eq_commut}
e_{v,i}f_{w,j} = f_{w,j}e_{v,i}, \ \ \ v,w\in V, 1\leq i,j\leq c.
\end{equation}
We define polynomials on $\cl E\cup\cl F$ in a similar fashion -- note that,
due to (\ref{eq_commut}), each such
polynomial is a sum of products of the form $\alpha \beta$, where $\alpha$ (resp. $\beta$) is a word on $\cl E$
(resp. $\cl F$).

Let
\[
  \Gamma_N = \{\gamma : \mbox{ a word in }\cl E\cup \cl F, |\gamma|\leq N\},
\]
$\cl A_N = {\rm span} \, \Gamma_N$ and
\[
  S_N = S_{N,n,c} = \{s : \cl A\to \bb{C} : s(1) = 1, s(p^*p)\geq 0 \mbox{ for all } p\in \cl A_N\}.
\]
The functionals $s$ above, as well as all functionals appearing
hereafter, are assumed to be linear.
Set $\Gamma = \cup_{N=1}^{\infty} \Gamma_N$.
Note that
\[
  \cl A = {\rm span} \Gamma = {\rm span}\{\cl A_{N} : N\in \bb{N}\}.
\]



\begin{lemma}\label{l_SN} 
Let $s: \cl A \to \bb C$ be a linear functional.
Then $s\in S_N$ if and only if
$(s(\beta^*\alpha))_{\alpha,\beta\in \Gamma_{N}}$ is a positive matrix.
\end{lemma}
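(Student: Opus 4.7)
The plan is a direct unpacking of both sides through the fact that $\mathcal{A}_N = \mathrm{span}\,\Gamma_N$, so that any $p \in \mathcal{A}_N$ can be expressed as $p = \sum_{\alpha \in \Gamma_N} c_\alpha \alpha$ for some coefficient vector $c = (c_\alpha)_{\alpha \in \Gamma_N} \in \mathbb{C}^{\Gamma_N}$ (not necessarily unique, since the words in $\Gamma_N$ need not be linearly independent as elements of $\mathcal{A}$, but this won't matter). The main computation is then just
\[
p^* p \;=\; \sum_{\alpha,\beta \in \Gamma_N} \overline{c_\beta}\, c_\alpha \, \beta^* \alpha,
\]
and applying the linear functional $s$ gives
\[
s(p^*p) \;=\; \sum_{\alpha,\beta \in \Gamma_N} \overline{c_\beta}\, c_\alpha \, s(\beta^* \alpha) \;=\; \langle M c, c \rangle,
\]
where $M = \bigl(s(\beta^*\alpha)\bigr)_{\alpha,\beta \in \Gamma_N}$ is the matrix in the statement. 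Note that each $\beta^*\alpha$ is itself a word of length at most $2N$ and so lies in $\mathcal{A}$, making the expression $s(\beta^*\alpha)$ well-defined.

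From this identity both implications are immediate. For the forward direction, assume $s \in S_N$ and fix an arbitrary $c \in \mathbb{C}^{\Gamma_N}$; set $p = \sum_{\alpha \in \Gamma_N} c_\alpha \alpha \in \mathcal{A}_N$. Then $s(p^*p) \ge 0$ by hypothesis, so $\langle Mc, c\rangle \ge 0$, proving that $M$ is positive. Conversely, if $M$ is positive and $p \in \mathcal{A}_N$ is arbitrary, choose any coefficient vector $c$ with $p = \sum_\alpha c_\alpha \alpha$; then $s(p^*p) = \langle Mc, c\rangle \ge 0$, so $s \in S_N$.

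There is essentially no obstacle here — this is a bookkeeping lemma that translates the definition of $S_N$ into the matrix positivity condition used throughout the NPA hierarchy. The only mild subtlety to flag is that $\Gamma_N$ is treated as an index set of formal words rather than as a linearly independent subset of $\mathcal{A}$; since the correspondence $c \mapsto \sum_\alpha c_\alpha \alpha$ is surjective onto $\mathcal{A}_N$, the above argument goes through without needing to pick a basis.
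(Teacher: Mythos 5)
Your proof is correct and takes essentially the same approach as the paper's: expand an arbitrary $p\in\mathcal{A}_N$ over $\Gamma_N$, compute $s(p^*p)$ as a quadratic form in the coefficients, and read off the equivalence with positivity of $(s(\beta^*\alpha))_{\alpha,\beta}$. Your remark about $\Gamma_N$ not being a basis is a sensible clarification that the paper leaves implicit, but it does not change the argument.
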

\begin{proof}
By definition, $s\in S_N$ if and only if
$s(p^*p)\geq 0$ for all $p\in \cl A_N$, if and only if
\[
  s\left(\left(\sum_{k=1}^m \overline{\lambda_k} \alpha_k^*\right)
               \left(\sum_{k=1}^m \lambda_k \alpha_k\right)\right) \geq 0,
\]
for all $\alpha_k\in \Gamma_N$, $\lambda_k\in \bb{C}$, $k = 1,\dots,m$,
if and only if
\[
  \sum_{k=1}^m \overline{\lambda_k} \lambda_l\alpha_k^* \alpha_l \geq 0,
\]
for all $\alpha_k\in \Gamma_N$, $\lambda_k\in \bb{C}$, $k = 1,\dots,m$,
if and only if the matrix $(s(\beta^*\alpha))_{\alpha,\beta\in \Gamma_N}$ is positive.
\end{proof}

Every functional on $\cl A$ can, after restriction, be considered as a
functional on 
$\cl A_N$.
Letting
\[
  S = S_{n,c} = \{s : \cl A\to \bb{C} : s(1) = 1, s(p^*p)\geq 0 \mbox{ for all } p\in \cl A\},
\]
we clearly have
\[
  S = \{s : \cl A\to \bb{C} : s|_{\cl A_N}\in S_N,  \mbox{ for all }  N\in \bb{N}\}.
\]

\begin{lemma}\label{l_GNS}
$s\in S$ if and only if there exists a state $\tilde{s}$ of the C*-algebra $\cl C = C^*(F(n,c)\times F(n,c))$
with $\tilde{s}|_{\cl A} = s$.
\end{lemma}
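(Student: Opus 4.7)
The reverse direction is immediate: if $\tilde{s}$ is a state on $\cl C$ with $\tilde{s}|_{\cl A} = s$, then $s(1) = \tilde{s}(1) = 1$, and for every $p \in \cl A$, $s(p^*p) = \tilde{s}(p^*p) \geq 0$, so $s \in S$. The substance of the lemma is in the forward direction, where I would carry out a GNS-style construction tailored to the universal C*-algebra $\cl C$.

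Given $s \in S$, the form $\langle p, q \rangle := s(q^*p)$ is sesquilinear and positive semidefinite on the *-algebra $\cl A$. Cauchy--Schwarz (which is available because positivity is assumed on all of $\cl A$) shows that $N := \{p \in \cl A : s(p^*p) = 0\}$ is a left ideal of $\cl A$. Let $H$ be the Hilbert space obtained by completing $\cl A/N$ with respect to the induced inner product, and define $\pi \colon \cl A \to \mathrm{End}(\cl A/N)$ by $\pi(a)[p] = [ap]$. The algebraic relations (\ref{eq_rel}) and (\ref{eq_commut}) pass through the quotient, so each generator acts as a self-adjoint idempotent on $\cl A/N$: for instance, $\pi(e_{v,i})^2 [p] = [e_{v,i}^2 p] = \pi(e_{v,i})[p]$ and $\langle \pi(e_{v,i})[p], [q] \rangle = s(q^* e_{v,i} p) = \langle [p], \pi(e_{v,i})[q] \rangle$ since $e_{v,i}^* = e_{v,i}$. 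A self-adjoint idempotent on an inner product space is automatically bounded with norm at most $1$, so each $\pi(e_{v,i})$ and $\pi(f_{w,j})$ extends to a projection on $H$.

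Now I invoke the universal property of $\cl C = C^*(F(n,c) \times F(n,c))$: since the operators $\pi(e_{v,i}), \pi(f_{w,j}) \in B(H)$ are projections satisfying the defining relations (\ref{eq_rel}) for each vertex and (\ref{eq_commut}), there is a unique $*$-homomorphism $\tilde{\pi} \colon \cl C \to B(H)$ extending them; by construction $\tilde{\pi}|_{\cl A} = \pi$. Setting $\xi = [1] \in H$ and $\tilde{s}(x) := \langle \tilde{\pi}(x)\xi, \xi\rangle$ for $x \in \cl C$ gives a state on $\cl C$ (positivity and $\tilde{s}(1) = \|\xi\|^2 = s(1) = 1$ are automatic), and for $a \in \cl A$ one has $\tilde{s}(a) = \langle [a], [1]\rangle = s(a)$, giving the required extension.

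The main obstacle is the boundedness step, which in a general GNS construction for an abstract *-algebra can fail. Here it is resolved cleanly because the generators of $\cl C$ are prescribed to be projections and the relations (\ref{eq_rel}) are algebraic identities that survive the quotient by $N$; without this idempotent structure one would have to estimate $\|\pi(a)[p]\|$ by hand. Once boundedness of the generators is in hand, the universal property of $\cl C$ does all the remaining work.
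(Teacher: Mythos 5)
Your proof is correct and supplies exactly the GNS-type construction the paper invokes but does not spell out: the reverse direction is immediate, and in the forward direction the key step is that boundedness of $\pi$ need only be checked on the idempotent generators $e_{v,i}, f_{w,j}$ (self-adjoint idempotents on a pre-Hilbert space are automatically contractive), after which the universal property of $\cl C$ as the maximal tensor product of the universal C*-algebras on the relations (\ref{eq_rel}) gives the extension $\tilde{\pi}$ and hence $\tilde{s}$. This matches the paper's intended argument; your remark that Cauchy--Schwarz alone makes $N$ a left ideal, without needing a C*-norm estimate, is the right way to see why the construction goes through at the *-algebra level.
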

\begin{proof}
The statement follows from a standard construction of GNS type; the detailed arguments are omitted.
\end{proof}

Let
$\cl R$ be the ideal of all polynomials on the set of non-commuting variables $\cl E\cup\cl F$
generated by the elements
\[
  e_{v,j}^2 - e_{v,j}, \ e_{v,j}^* - e_{v,j}, \ \sum_{k=1}^c e_{v,k} - 1, \
  e_{v,i}f_{w,j} - f_{w,j}e_{v,i},
\]
where $v,w\in V, 1\leq i,j\leq c$.
Set $\cl R_N = \cl R\cap \cl A_N$.
For example, $\cl R$ contains the elements $\beta_1^*\alpha_1 - \beta_2^*\alpha_2$ if
$\beta_1^*\alpha_1 = \beta_2^*\alpha_2$ in $\Gamma$.
We let
\[
  \cl M = \left\{ (c_{\alpha,\beta})_{\alpha,\beta\in \Gamma} :
     \sum_{k=1}^m \lambda_k \beta_k^*\alpha_k \in \cl R \Longrightarrow
     \sum_{k=1}^m \lambda_k c_{\alpha_k,\beta_k} = 0 \right\}.
\]
We also let
$\cl M_N$ be the set of all compressions of matrices in $\cl M$ to
$\Gamma_N\times\Gamma_N$.
In particular, if $(c_{\alpha,\beta})_{\alpha,\beta\in \Gamma_N}$
then $c_{\alpha_1,\beta_1} = c_{\alpha_2,\beta_2}$
whenever $\beta_1^*\alpha_1 = \beta_2^*\alpha_2$.

\begin{lemma}\label{l_matrf}
There is a bijective correspondence between $\cl M$ and the set of all
linear functionals on $\cl A$, sending an element $(c_{\alpha,\beta})_{\alpha,\beta\in \Gamma}$ of $\cl M$
to the functional $f : \cl A\to \bb{C}$ given by
$f(\beta^*\alpha) = c_{\alpha,\beta}$, $\alpha,\beta\in \Gamma$.
\end{lemma}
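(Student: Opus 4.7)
The plan is to identify $\cl A$ with the quotient of the free $*$-algebra on the non-commuting variables $\cl E \cup \cl F$ by the ideal $\cl R$; once this identification is in place, the lemma reduces to the standard fact that a linear functional on a quotient is precisely a linear functional on the ambient algebra that vanishes on the ideal.

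First I would observe that the empty word lies in $\Gamma_0 \subseteq \Gamma$, so every $\alpha \in \Gamma$ may be written as $1^{\ast}\alpha$; consequently $\{\beta^{\ast}\alpha : \alpha,\beta \in \Gamma\}$ contains $\Gamma$ and hence spans $\cl A$. Next, invoking the universal-property description of $C^{\ast}(F(n,c) \times F(n,c))$ recalled just before the lemma (its defining relations being precisely those that generate $\cl R$), $\cl A$ is naturally the quotient of the free $*$-algebra on $\cl E \cup \cl F$ by $\cl R$.

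With that identification fixed, the forward direction is immediate: given $(c_{\alpha,\beta}) \in \cl M$, I would set $f(\beta^{\ast}\alpha) := c_{\alpha,\beta}$ on the spanning set and extend by linearity. To verify this is well-defined on $\cl A$, suppose $\sum_k \lambda_k \beta_k^{\ast}\alpha_k = 0$ in $\cl A$, equivalently $\sum_k \lambda_k \beta_k^{\ast}\alpha_k \in \cl R$; the defining condition of $\cl M$ then yields $\sum_k \lambda_k c_{\alpha_k,\beta_k} = 0$, which is exactly the compatibility needed for $f$ to descend. Conversely, given any linear functional $f : \cl A \to \bb{C}$, the matrix $c_{\alpha,\beta} := f(\beta^{\ast}\alpha)$ automatically lies in $\cl M$, because any $\sum_k \lambda_k \beta_k^{\ast}\alpha_k \in \cl R$ vanishes in $\cl A$, hence $\sum_k \lambda_k f(\beta_k^{\ast}\alpha_k) = 0$ by linearity of $f$. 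The two constructions are manifestly mutually inverse, since both recover $f(\beta^{\ast}\alpha) = c_{\alpha,\beta}$ on the spanning set.

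The only genuinely nontrivial ingredient is the identification $\cl A \cong (\text{free }*\text{-algebra})/\cl R$, which I expect to be the main obstacle: it requires unpacking the universal property of the free product $C^{\ast}$-algebra and checking that no further relations beyond those generating $\cl R$ are hidden in the $C^{\ast}$-algebra structure. Once granted, the remainder of the argument is the routine ``factor through the quotient'' observation.
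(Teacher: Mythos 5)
Your proof is correct and takes essentially the same approach as the paper: define $f$ on the spanning set $\{\beta^*\alpha\}$, check well-definedness using the definition of $\cl M$, and note that the inverse direction is automatic by linearity. The paper's proof is just a one-sentence version of yours. The ``nontrivial ingredient'' you flag --- that $\cl A$ really is the quotient of the free $*$-algebra on $\cl E\cup\cl F$ by $\cl R$, with no hidden relations coming from the $C^*$-completion --- is addressed earlier in the paper, where it is noted that the left regular representation of $F(n,c)$ is faithful on the group algebra, so that $\cl P$ (hence $\cl A = \cl P\otimes\cl P$) is exactly the universal $*$-algebra on those generators and relations; your instinct to single this out as the point requiring justification is right, and the paper implicitly relies on that earlier observation.
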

\begin{proof}
The only thing that needs to be checked is that, given
$(c_{\alpha,\beta})_{\alpha,\beta\in \Gamma}\in \cl M$, the mapping defined on
the generators of $\cl A$ by
$f(\beta^*\alpha) = c_{\alpha,\beta}$, $\alpha,\beta\in \Gamma$, and extended by linearity,
is well-defined. This follows from the definition of $\cl M$.
\end{proof}

For $N\in \bb{N}$ we now let
\[
  C^N(n,c) = \{(s(e_{v,i}f_{w,j}))_{v,i,w,j} : s\in S_N\}.
\]
Note that 
\[
  C_{\cc}(n,c) = \{(s(e_{v,i}\otimes f_{w,j}))_{v,i,w,j} : s\in S(\cl S(n,c)\otimes_{\rm c}\cl S(n,c))\},
\]
the set of all relativistic quantum correlations.

\begin{theorem}\label{th_compl}
(i) \ A matrix $A = (a_{v,i,w,j})_{v,i,w,j}$ belongs to $C_{\cc}(n,c)$ if and only if
it can be completed to a positive matrix $B = (b_{\alpha,\beta})_{\alpha,\beta\in \Gamma} \in \cl M$
(meaning that every finite submatrix of $B$ is positive) with $b_{1,1} = 1$.

(ii) A matrix $A = (a_{v,i,w,j})_{v,i,w,j}$ belongs to $C^N(n,c)$ if and only if
it can be completed to a positive matrix $B = (b_{\alpha,\beta})_{\alpha,\beta\in \Gamma_N}\in \cl M_N$
with $b_{1,1} = 1$.
\end{theorem}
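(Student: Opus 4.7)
The plan is to reduce both parts of Theorem \ref{th_compl} to a synthesis of the three preceding lemmas: Lemma \ref{l_GNS} converts elements of $S$ into genuine states on $\cl C$, Lemma \ref{l_matrf} translates between linear functionals on $\cl A$ and matrices in $\cl M$, and Lemma \ref{l_SN} translates positivity of $s$ on a subspace $\cl A_N$ into positive semidefiniteness of the finite matrix $(s(\beta^*\alpha))_{\alpha,\beta\in\Gamma_N}$. Once these dictionaries are in place, both (i) and (ii) become almost tautological.

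For (i), I would first argue that $A\in C_{\cc}(n,c)$ is equivalent to the existence of a state $\tilde{s}$ on $\cl C = C^*(F(n,c)\times F(n,c))$ with $\tilde{s}(e_{v,i}f_{w,j}) = a_{v,i,w,j}$. This uses the identification $C^*(F(n,c))\omax C^*(F(n,c)) = \cl C$ together with the fact that the commuting tensor product $\cl S(n,c)\oc\cl S(n,c)$ embeds (completely order isomorphically) into this maximal C*-tensor product and that any state on an operator subsystem extends to a state on a containing C*-algebra. Lemma \ref{l_GNS} then turns this into the existence of $s\in S$ with the same values. Lemma \ref{l_matrf} identifies $s$ with a matrix $B=(b_{\alpha,\beta})_{\alpha,\beta\in\Gamma}\in\cl M$ via $b_{\alpha,\beta}=s(\beta^*\alpha)$; the normalisation $s(1)=1$ becomes $b_{1,1}=1$, and choosing $\alpha=e_{v,i}f_{w,j}$, $\beta=1$ (or $\alpha=e_{v,i}$, $\beta=f_{w,j}$, using self-adjointness and commutativity) exhibits $a_{v,i,w,j}$ as the entry $b_{e_{v,i}f_{w,j},1}$, so $B$ genuinely completes $A$. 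Finally, Lemma \ref{l_SN} applied at every level $N$ shows that $s\in S$ if and only if every finite submatrix of $B$ indexed by some $\Gamma_N\times\Gamma_N$ is positive, which is the meaning of ``$B$ positive'' in the statement.

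For (ii), I would rerun the same argument truncated at level $N$. If $A\in C^N(n,c)$, pick $s\in S_N$ realising it, let $C\in\cl M$ be the matrix associated to $s$ by Lemma \ref{l_matrf}, and let $B$ be its compression to $\Gamma_N\times\Gamma_N$; then $B\in\cl M_N$ by definition, $B$ is positive by Lemma \ref{l_SN}, $b_{1,1}=1$, and its entries of the form $b_{e_{v,i}f_{w,j},1}$ reproduce $A$. Conversely, given a positive $B\in\cl M_N$ with $b_{1,1}=1$ completing $A$, extend $B$ to some $C\in\cl M$ (by definition of $\cl M_N$), pass via Lemma \ref{l_matrf} to a linear functional $s:\cl A\to\bb C$ with $s(\beta^*\alpha)=c_{\alpha,\beta}$, and use Lemma \ref{l_SN} together with positivity of $B$ to conclude $s\in S_N$; then $A=(s(e_{v,i}f_{w,j}))\in C^N(n,c)$.

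The one step that requires some care beyond invoking the lemmas is the initial translation in (i) between states on $\cl S(n,c)\oc\cl S(n,c)$ and states on $\cl C$; this rests on the fact that the commuting operator system tensor product of subsystems sits completely order isomorphically inside the maximal C*-tensor product of any containing C*-algebras, so that the Hahn--Banach / Arveson extension of states is available. The rest is book-keeping to check that the relations built into $\cl M$ (resp.\ $\cl M_N$) are precisely those needed to interpret the matrix entry $b_{\alpha,\beta}$ unambiguously as $s(\beta^*\alpha)$, which is already the content of Lemma \ref{l_matrf}.
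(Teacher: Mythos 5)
Your proposal is correct and follows essentially the same route as the paper's own proof: both reduce to the dictionary provided by Lemmas \ref{l_SN}, \ref{l_GNS} and \ref{l_matrf}, with the only cosmetic difference being that in the forward direction of (i) the paper reads off positivity of the finite submatrices directly from the GNS inner products $s(\beta^*\alpha)=\langle\pi(\alpha)\xi,\pi(\beta)\xi\rangle$, whereas you invoke Lemma \ref{l_SN} level by level; these are interchangeable. The remaining steps (state extension via the complete order embedding of $\cl S(n,c)\oc\cl S(n,c)$ into $C^*(F(n,c))\omax C^*(F(n,c))$, and the bijection $b_{\alpha,\beta}=s(\beta^*\alpha)$) match the paper.
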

\begin{proof}
(i) Let $s\in S(\cl S(n,c)\otimes_{\rm c}\cl S(n,c))$ be a state such that
$A = (s(e_{v,i}\otimes f_{w,j}))_{v,i,w,j}$.
Let $H$ be a Hilbert space, $\pi : C^*(F(n,c))\otimes_{\max} C^*(F(n,c)) \to \cl B(H)$ be
a *-representation and $\xi\in H$ be a unit vector such that
$s(T) = \langle \pi(T)\xi,\xi\rangle$, $T\in C^*(F(n,c))\otimes_{\max} C^*(F(n,c))$.
Thus,
\[
  s(\beta^*\alpha) = \langle \pi(\alpha)\xi,\pi(\beta)\xi\rangle,
\]
and every finite submatrix of the matrix $(s(\beta^*\alpha))_{\alpha,\beta\in \Gamma}$
is positive.

Conversely, suppose that $B = (b_{\alpha,\beta})_{\alpha,\beta\in \Gamma}\in \cl M$ 
is a completion of $A$ that has
the property that all of its finite submatrices are positive.
It follows from Lemma \ref{l_SN} that the linear functional $s : \cl A\to \bb{C}$ given by
$s(\alpha) = b_{\alpha,1}$ (and well-defined by Lemma \ref{l_matrf})
is positive. Note that $s(1) = 1$. By Lemma \ref{l_GNS},
$s$ is the restriction to $\cl A$ of a state of $C^*(F(n,c)\times F(n,c))$.
Thus, $A\in C_{\cc}(n,c)$.

(ii) Suppose that $A = (a_{v,i,w,j})_{v,i,w,j}$ belongs to $C^N(n,c)$. 
Then there exists $s\in S_N$ such that $a_{v,i,w,j} = s(e_{v,i}f_{w,j})$, $v,w\in V$, $i,j = 1,\dots,c$. 
By Lemma \ref{l_SN}, $(s(\beta^*\alpha))_{\alpha,\beta\in \Gamma_N}$ is a 
positive completion of $A$ that lies in $\cl M_N$. 

Conversely, suppose that $B$ is a positive completion of $A$ that lies in $\cl M_N$. 
By Lemma \ref{l_matrf}, $B$ is the compression to $\Gamma_N\times\Gamma_N$ 
of a matrix of the form $(s(\beta^*\alpha))_{\alpha,\beta\in \Gamma}$ and, by 
Lemma \ref{l_SN}, $s\in S_N$. It follows that $A\in A_N$. 
\end{proof}

The following corollary follows directly from Lemma \ref{l_matrf}; we omit the detailed proof. 

\begin{corollary}\label{c_qcita}
Let $G = (V,E)$ be a graph. We have that $\chi_{\qc}(G)\leq c$ if and only if
there exists $s\in S$ such that
\begin{align}
  \label{cc}
  \forall v, \forall i \ne j, s(e_{v,i}f_{v,j})  &=0, \nonumber\\
  \forall (v,w) \in E, \forall i, s (e_{v,i}f_{w,i} ) &=0.
\end{align}
\end{corollary}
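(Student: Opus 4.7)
The plan is to splice together Proposition \ref{p_chqaqc} with the identification of $C_{\cc}(n,c)$ provided by Lemma \ref{l_GNS} (and implicit in Theorem \ref{th_compl}(i)), using Lemma \ref{l_matrf} to keep the matrix/functional dictionary clean. The argument is essentially a translation of the coordinate conditions (\ref{tdefn}) for $\mathrm{x} = \qc$ into conditions on the values of a functional $s\in S$ on the generators $e_{v,i}f_{w,j}$.

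First, I would apply Proposition \ref{p_chqaqc} with $\mathrm{x} = \qc$: $\chi_{\qc}(G)\le c$ holds if and only if there is some $A\in C_{\cc}(n,c)$ with $L_{G,c}(A) = 0$. Since entries of matrices in $C_{\cc}(n,c)$ are non-negative (they arise as $\langle E_{v,i}F_{w,j}\xi,\xi\rangle$ with commuting projection-valued data, up to passing to projections as discussed before the statement), the vanishing of $L_{G,c}(A)$ is equivalent to the coordinate-wise conditions (\ref{tdefn}): $a_{v,i,v,j} = 0$ for $i\ne j$, and $a_{v,i,w,i} = 0$ for $(v,w)\in E$.

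Next, I would invoke the bijective dictionary: by Lemma \ref{l_matrf}, an element $s\in S$ is uniquely determined by the values $s(\beta^*\alpha) = c_{\alpha,\beta}$, and by Lemma \ref{l_GNS} each $s\in S$ extends to a state $\tilde s$ of $C^*(F(n,c)\times F(n,c))$. Conversely, restricting any state of that C*-algebra gives an element of $S$. Composing with the canonical identification of $C_{\cc}(n,c)$ as the set of matrices $(s(e_{v,i}f_{w,j}))_{v,i,w,j}$ (read off from the proof of Theorem \ref{th_compl}(i), where the extended state defines precisely this matrix), we get the bijection between $A\in C_{\cc}(n,c)$ with $L_{G,c}(A)=0$ and functionals $s\in S$ for which $s(e_{v,i}f_{v,j}) = 0$ for all $v$ and $i\ne j$, and $s(e_{v,i}f_{w,i}) = 0$ for all $(v,w)\in E$ and all $i$. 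Substituting $a_{v,i,w,j} = s(e_{v,i}f_{w,j})$ into the conditions (\ref{tdefn}) then yields (\ref{cc}), and conversely.

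There is no substantive obstacle — the content lies entirely in the preceding lemmas and Theorem \ref{th_compl}(i). The only thing to be careful about is the well-definedness of $s(e_{v,i}f_{w,j})$ as a value of a functional on $\cl A$, which is precisely what Lemma \ref{l_matrf} guarantees, and the non-negativity of the entries of elements of $C_{\cc}(n,c)$, which allows the passage between the sum-of-non-negatives formulation $L_{G,c}(A)=0$ and the individual vanishing conditions.
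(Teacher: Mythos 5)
Your proof is correct and takes essentially the approach the paper intends: the paper omits the detailed proof with the remark that the corollary "follows directly from Lemma \ref{l_matrf}," and your argument just spells out the translation via Proposition \ref{p_chqaqc}, the non-negativity of entries of $C_{\cc}(n,c)$, and the identification of $C_{\cc}(n,c)$ with $\{(s(e_{v,i}f_{w,j})) : s\in S\}$ afforded by Lemmas \ref{l_matrf} and \ref{l_GNS} and the proof of Theorem \ref{th_compl}(i). (Minor wording quibble: the correspondence $s \mapsto A$ need not be injective, so "bijection" overstates it slightly; existence in both directions is all that is needed.)
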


\begin{lemma}\label{l_bound}
Let $s\in S_N$. Then $|s(\gamma)|\leq 1$ for all $\gamma\in \Gamma$ with $|\gamma|\leq 2N$.
\end{lemma}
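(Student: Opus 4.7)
The plan is to prove the bound in two stages. First, I would establish the diagonal bound $s(\alpha^*\alpha) \le 1$ for every $\alpha \in \Gamma_N$, and then deduce the general bound by a Cauchy--Schwarz argument applied to the positive matrix $(s(\beta^*\alpha))_{\alpha,\beta \in \Gamma_N}$ guaranteed by Lemma \ref{l_SN}.

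The key algebraic observation is that each generator $g \in \cl E \cup \cl F$ is a self-adjoint projection and, by the relations (\ref{eq_rel}), its complement is a sum of orthogonal projections from the same family: for $g = e_{v,i}$ we have $1 - g = \sum_{i' \neq i} e_{v,i'}$, and similarly for $f_{w,j}$. Consequently, for any word $\alpha'$,
\[
\alpha'^* (1-g)\alpha' \,=\, \sum_{i' \neq i} (e_{v,i'}\alpha')^* (e_{v,i'}\alpha'),
\]
exhibited as a sum of squares. To prove $s(\alpha^*\alpha) \le 1$ for $\alpha \in \Gamma_N$, I proceed by induction on $r = |\alpha|$. The base case $r=0$ gives $\alpha = 1$ and $s(1) = 1$. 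For the inductive step, write $\alpha = g\alpha'$ with $|\alpha'| = r-1$. Since $g^2 = g$, we have $\alpha^*\alpha = \alpha'^* g \alpha'$, so
\[
s(\alpha'^*\alpha') - s(\alpha^*\alpha) = s\bigl(\alpha'^*(1-g)\alpha'\bigr) = \sum_{i' \neq i} s\bigl((e_{v,i'}\alpha')^*(e_{v,i'}\alpha')\bigr) \ge 0,
\]
where the last inequality uses $s \in S_N$ and the fact that each $e_{v,i'}\alpha'$ is a polynomial of degree at most $r \le N$, hence lies in $\cl A_N$. By the inductive hypothesis, $s(\alpha'^*\alpha') \le 1$, and therefore $s(\alpha^*\alpha) \le 1$.

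For the second stage, given $\gamma \in \Gamma$ with $|\gamma| = k \le 2N$, write $\gamma = g_1 g_2 \cdots g_k$ and split it in the middle: let $m = \lfloor k/2 \rfloor$, set $\beta = g_m g_{m-1} \cdots g_1$ and $\alpha = g_{m+1}\cdots g_k$. Since every generator is self-adjoint, $\beta^* = g_1 \cdots g_m$, hence $\beta^*\alpha = \gamma$ in $\cl A$, while both $|\alpha|, |\beta| \le N$ so $\alpha,\beta \in \Gamma_N$. By Lemma \ref{l_SN} the matrix $(s(\beta^*\alpha))_{\alpha,\beta \in \Gamma_N}$ is positive semidefinite, so the Cauchy--Schwarz inequality for positive matrices yields
\[
|s(\gamma)|^2 = |s(\beta^*\alpha)|^2 \le s(\alpha^*\alpha)\, s(\beta^*\beta) \le 1,
\]
using the first stage. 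This gives $|s(\gamma)| \le 1$ as required.

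The only potentially delicate point is the bookkeeping in the inductive step: one must verify that the polynomials $e_{v,i'}\alpha'$ appearing in the sum-of-squares decomposition genuinely lie in $\cl A_N$ so that the positivity hypothesis $s \in S_N$ may be invoked. This is immediate from the fact that $|e_{v,i'}\alpha'| \le |\alpha| \le N$, but it is the reason the bound extends to words of length up to $2N$ rather than further.
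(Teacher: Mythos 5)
Your proof is correct and takes essentially the same two-stage approach as the paper: first establish $s(\alpha^*\alpha)\le 1$ for words $\alpha$ of length at most $N$ by an induction that exploits $\sum_i e_{v,i}=1$, and then split a word $\gamma$ of length at most $2N$ as $\beta^*\alpha$ with $\alpha,\beta\in\Gamma_N$ and invoke positivity of the $2\times 2$ submatrix of $(s(\beta^*\alpha))$ from Lemma~\ref{l_SN}. The only cosmetic differences are that you phrase the inductive step by writing $1-g$ as a sum of the other projections rather than summing $s(\gamma^*e_{v,k}\gamma)$ over $k$, and you conclude via the Cauchy--Schwarz form $|s(\beta^*\alpha)|^2\le s(\alpha^*\alpha)s(\beta^*\beta)$ rather than the equivalent bound by the maximum of the diagonal entries.
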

\begin{proof}
We first show that $0\leq s(\gamma^*\gamma)\leq 1$ for all words $\gamma$ with $|\gamma|\leq N$.
To this end, we use induction on $|\gamma|$.
We have that $s(e_{v,i}) =  s(e_{v,i}^*e_{v,i}) \geq 0$ while
$\sum_{i=1}^n s(e_{v,i}) = 1$; it follows that $0\leq s(e_{v,i})\leq 1$ for all $v,i$.
By symmetry, $0\leq s(f_{w,j})\leq 1$ for all $w,j$, that is, the claim holds for all
$\gamma$ with $|\gamma| = 1$.

Suppose $|\gamma|\leq k$, for some $k\leq N-1$.
If $v\in V$ then
\[
  0\leq s((e_{v,i}\gamma)^*(e_{v,i}\gamma)) = s(\gamma^*e_{v,i}\gamma) \ 
    \mbox{ and } \ \sum_{k=1}^c s(\gamma^*e_{v,k}\gamma) = s(\gamma^*\gamma)\leq 1.
\]
It follows by induction that
$0\leq s(\gamma^*\gamma)\leq 1$ whenever $|\gamma|\leq N$.

Now suppose $|\gamma|\leq 2N$ and write $\gamma = \beta^*\alpha$ for some
words $\alpha$ and $\beta$ of length at most $N$.
By Lemma \ref{l_SN}, the matrix
$\left(\smallmatrix s(\alpha^*\alpha) & s(\beta^*\alpha)\\
s(\alpha^*\beta) & s(\beta^*\beta)\endsmallmatrix \right)$ is positive.
Thus, 
\[
  |s(\gamma)| = |s(\beta^*\alpha)|\leq \max\{s(\alpha^*\alpha),s(\beta^*\beta)\} \leq 1.
\]
\end{proof}

\begin{lemma}\label{l_compact}
Let $s_N\in S_N$, $N\in \bb{N}$. There exists a subsequence $(s_{N'})_{N'}$ of $(s_N)_{N}$
which converges pointwise to an element of $S$.
\end{lemma}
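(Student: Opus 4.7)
The plan is a Cantor diagonal argument: Lemma \ref{l_bound} provides a pointwise uniform bound on the functionals $s_N$ when tested against the countable set $\Gamma$, so the Bolzano--Weierstrass/Tychonoff compactness of a countable product of discs yields a pointwise convergent subsequence, after which one checks that the limit lies in $S$.

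First I would observe that $\Gamma = \bigcup_{N\in \bb{N}} \Gamma_N$ is countable, since each $\Gamma_N$ is a finite set of words in the finite alphabet $\cl E \cup \cl F$. Enumerate $\Gamma = \{\gamma_1, \gamma_2, \ldots\}$. For each fixed $k$, pick $M_k \in \bb{N}$ with $2M_k \geq |\gamma_k|$. Lemma \ref{l_bound} then gives $|s_N(\gamma_k)| \leq 1$ for every $N \geq M_k$, so the tail of $(s_N(\gamma_k))_N$ lies in the compact disc $\{z \in \bb{C} : |z|\leq 1\}$.

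Next I would perform the standard nested extraction, choosing subsequences $(s_N^{(1)}) \supseteq (s_N^{(2)}) \supseteq \cdots$ so that $(s_N^{(k)}(\gamma_k))_N$ converges as $N\to \infty$, and forming the diagonal subsequence $s_{N'} := s_{N'}^{(N')}$. Then $\lim_{N' \to \infty} s_{N'}(\gamma)$ exists for every $\gamma \in \Gamma$. Since $\cl A = {\rm span}\,\Gamma$ and each $s_{N'}$ is a linear functional on $\cl A$, the limit $s(a) := \lim_{N'\to\infty} s_{N'}(a)$ exists for every $a \in \cl A$ (write $a$ as a finite linear combination of elements of $\Gamma$; the limit of such a combination is well-defined because each $s_{N'}$ is already a well-defined linear functional on $\cl A$). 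Linearity of $s$ is automatic.

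To conclude $s \in S$, the normalization $s(1) = \lim_{N'} s_{N'}(1) = 1$ is immediate. For positivity, I would note that $\cl A_M \subseteq \cl A_N$ whenever $M \leq N$, which forces $S_N \subseteq S_M$; in other words, the sequence $(S_N)$ is decreasing. Given any $p \in \cl A$, choose $M$ with $p \in \cl A_M$; for every $N' \geq M$, $s_{N'} \in S_{N'} \subseteq S_M$, hence $s_{N'}(p^*p) \geq 0$. Passing to the limit, $s(p^*p) \geq 0$, so $s \in S$.

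There is no substantive obstacle beyond assembling these pieces; the key insight is that the bound of Lemma \ref{l_bound} is \emph{uniform} in $N$ once $N$ is large enough relative to $|\gamma|$, which is precisely what a diagonal argument on the countable set $\Gamma$ needs.
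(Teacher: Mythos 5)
Your proposal is correct and follows essentially the same route as the paper: a Cantor diagonal argument on the countable set $\Gamma$, justified by the uniform bound of Lemma \ref{l_bound} and the nesting $S_N \subseteq S_M$ for $N \geq M$, followed by the observation that the pointwise limit of linear positive functionals is itself linear and positive. The paper organizes the diagonal extraction by blocks $\Gamma_{2m}$ rather than enumerating $\Gamma$ element-by-element, but this is a cosmetic difference.
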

\begin{proof}
Since $s_N\in S_1$ for all $N$, Lemma \ref{l_bound} implies that
$|s_N(\gamma)|\leq 1$ whenever $\gamma\in \Gamma_2$.
Thus, there exists a subsequence
\[
  s_{N_1}^{(1)}, s_{N_2}^{(1)}, s_{N_3}^{(1)}, \ldots
\]
such that $(s_{N_k}^{(1)}(\gamma))_{k\in \bb{N}}$ converges whenever $\gamma\in \Gamma_2$.
Deleting the first terms of this subsequence, if necessary, we may assume that
$s_{N_k}^{(1)}\in S_2$ for all $k$.

Using Lemma \ref{l_bound}, it now follows that there exists a subsequence $(s_{N_k}^{(2)})_{k\in \bb{N}}$
of $(s_{N_k}^{(1)})_{k\in \bb{N}}$ such that
$(s_{N_k}^{(2)}(\gamma))_{k\in \bb{N}}$ converges whenever $\gamma\in \Gamma_4$.
Deleting the first terms of this subsequence, if necessary, we may assume that
$s_{N_k}^{(2)}\in S_3$ for all $k$.

Continuing inductively, for each $m$ we obtain a sequence
$(s_{N_k}^{(m)})_{k\in \bb{N}}$, which is a subsequence of
$(s_{N_k}^{(m-1)})_{k\in \bb{N}}$, and which has the property that
$(s_{N_k}^{(m)}(\gamma))_{k\in \bb{N}}$ converges whenever $\gamma\in \Gamma_{2m}$.

We claim that
$(s_{N_k}^{(k)}(\gamma))_{k\in \bb{N}}$ converges whenever $\gamma\in \Gamma$.
To see this, let $\gamma\in \Gamma$ and assume that $|\gamma|\leq 2m$ for some $m$.
Then $(s_{N_k}^{(k)}(\gamma))_{k\geq m}$ is a subsequence of
$(s_{N_k}^{(m)}(\gamma))_{k\in \bb{N}}$ and hence converges.

Set $s(p) = \lim_{k\to\infty} s_{N_k}^{(k)}(\gamma)$, $p\in \cl A$.
Since $s$ is a pointwise limits of linear maps, it is a linear map itself. Clearly, $s(1) = 1$.
We claim that $s\in S$. If $p\in \cl A$ is a polynomial, then $p\in \cl A_N$ for some $N$
and hence $s_{N_k}^{(k)}(p^*p) \geq 0$ whenever $N_k\geq N$. It follows that $s(p^*p)\geq 0$.
\end{proof}

\begin{theorem}\label{th_inte}
We have that $\cap_{N=2}^{\infty} C^N(n,c) = C_{\cc}(n,c)$.
\end{theorem}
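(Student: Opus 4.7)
The plan is to establish the two inclusions separately, both via the characterisations in Theorem \ref{th_compl} and the compactness result Lemma \ref{l_compact}.

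For the easy inclusion $C_{\cc}(n,c) \subseteq C^N(n,c)$, given $A \in C_{\cc}(n,c)$ I would use Theorem \ref{th_compl}(i) to obtain a positive completion $B \in \cl M$ with $b_{1,1}=1$. Equivalently, by Lemma \ref{l_matrf} and Lemma \ref{l_SN}, there is $s \in S$ with $s(e_{v,i}f_{w,j}) = a_{v,i,w,j}$. Since $s|_{\cl A_N} \in S_N$ trivially, we have $A \in C^N(n,c)$. This gives $C_{\cc}(n,c) \subseteq \cap_{N \geq 2} C^N(n,c)$.

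For the harder inclusion, I would fix $A \in \cap_{N=2}^\infty C^N(n,c)$. For each $N \geq 2$, choose $s_N \in S_N$ with $s_N(e_{v,i}f_{w,j}) = a_{v,i,w,j}$. Apply Lemma \ref{l_compact} to extract a subsequence $(s_{N'})$ converging pointwise to some $s \in S$. Since each $s_{N'}$ agrees with $A$ on the elements $e_{v,i}f_{w,j}$, the pointwise limit satisfies $s(e_{v,i}f_{w,j}) = a_{v,i,w,j}$ as well. By Lemma \ref{l_GNS}, $s$ extends to a state $\tilde{s}$ of the full C*-algebra $\cl C = C^*(F(n,c)\times F(n,c)) \cong C^*(F(n,c)) \otimes_{\max} C^*(F(n,c))$, and under this identification $e_{v,i}f_{w,j}$ corresponds to $e_{v,i} \otimes f_{w,j}$. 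Restricting $\tilde{s}$ to $\cl S(n,c)\otimes_{\rm c}\cl S(n,c)$ produces a state whose matrix of values on the generators is exactly $A$, so $A \in C_{\cc}(n,c)$ by the definition recalled just before Theorem \ref{th_compl}.

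The only delicate step is making sure the subsequential limit still has the right values on the specific elements $e_{v,i}f_{w,j}$; but since every $s_N$ takes the prescribed value $a_{v,i,w,j}$ on these, the pointwise limit automatically does so — there is no obstacle here. The real content was already isolated in the two preceding lemmas: Lemma \ref{l_bound}, which provides uniform boundedness of $|s_N(\gamma)|$ on each $\Gamma_{2m}$ and hence sequential compactness, and Lemma \ref{l_GNS}, which lets us promote the limiting functional on the dense *-subalgebra $\cl A$ to a genuine state of $\cl C$. Thus the present theorem is essentially the packaging of these two ingredients, and once they are in place the proof is a short diagonal-subsequence argument with no further surprises.
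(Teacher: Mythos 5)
Your proposal is correct and follows essentially the same route as the paper: the hard inclusion is the same diagonal-subsequence argument via Lemma \ref{l_compact} followed by Lemma \ref{l_GNS}. The only cosmetic difference is in the easy inclusion, where the paper invokes the coi embedding $\cl S(n,c)\otimes_{\rm c}\cl S(n,c)\subseteq_{\rm coi} C^*(F(n,c))\otimes_{\max} C^*(F(n,c))$ directly to extend the state and then restrict, whereas you route through Theorem \ref{th_compl}(i); since that theorem is itself proved using the same embedding, the two presentations are equivalent.
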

\begin{proof}
Suppose that $s\in S(\cl S(n,c)\otimes_{\rm c}\cl S(n,c))$.
By \cite[Lemma 2.8]{pt_chrom},
\begin{equation}\label{eq_coicmax}
\cl S(n,c)\otimes_{\rm c}\cl S(n,c))\subseteq_{\rm coi} C^*(F(n,c))\otimes_{\max} C^*(F(n,c)),
\end{equation}
and hence $s$ extends to a state $\tilde{s}$ on $C^*(F(n,c))\otimes_{\max} C^*(F(n,c))$.
Letting $s_N = \tilde{s}|_{\cl A_N}$, we have that $s_N\in S_N$;
clearly, $s(e_{v,i}\otimes f_{w,j}) = s_N(e_{v,i}f_{w,j})$, $N\geq 2$, so $C_{\cc}(n,c)\subseteq \cap_{N=2}^{\infty} C^N(n,c)$.

Conversely, suppose that $A\in \cap_{N=2}^{\infty} C^N(n,c)$.
For each $N\geq 2$, let
$s_N\in S_N$ be such that
$A = (s_N(e_{v,i}f_{w,j}))_{v,i,w,j}$.
By Lemma \ref{l_compact}, there exists a subsequence of $(s_N)$
which converges pointwise to an element $s\in S$.
Since $s_N(e_{v,i}f_{w,j}) = s_M(e_{v,i}f_{w,j})$ for all $N,M\in \bb{N}$,
we have that $s(e_{v,i}f_{w,j}) = s_N(e_{v,i}f_{w,j})$, $N\in \bb{N}$.
By Lemma \ref{l_GNS}, $s$ is the restriction of a state $\tilde{s}$ on 
$C^*(F(n,c)\times F(n,c)) = C^*(F(n,c))\otimes_{\max} C^*(F(n,c))$. 
By (\ref{eq_coicmax}), we have that $A\in C_{\cc}(n,c)$.
\end{proof}

\begin{definition}\label{d_Ncol}
Let $G$ be a graph on a set $V$ of $n$ vertices with set $E$ of edges.
A \emph{quantum $N,c$-colouring} of $G$ is a state $s\in S_{N,n,c}$ such that
\begin{align}
\forall v, \forall i \ne j, s(e_{v,i}f_{v,j})  &=0, \nonumber\\
\forall (v,w) \in E, \forall i, s (e_{v,i}f_{w,i} ) &=0.
\end{align}
The \emph{$N$th quantum chromatic number} $\chi_{\qc}^N(G)$ of $G$ is the smallest positive integer $c$
for which there exists a quantum $N,c$-colouring of $G$.
\end{definition}

According to Theorem \ref{th_compl}, quantum $N,c$-colourings of $G$ correspond bijectively to
matrices $A = (a_{v,i,w,j})_{v,i,w,j}\in \cl C^N(n,c)$ such that
\[
  a_{v,i,v,j} =0, \ \forall v, \forall i \ne j \ 
    \mbox{ and } \ a_{v,i,w,i} = 0, \forall (v,w) \in E, \forall i.
\]

\begin{theorem}\label{th_ine}
Let $G$ be a graph.
We have $\chi_{\qc}^N(G)\leq \chi_{\qc}^{N+1}(G)\leq \chi_{\qc}(G)$, for every $N\in \bb{N}$.
Moreover, $\lim_{N\to\infty} \chi_{\qc}^N(G) = \chi_{\qc}(G)$.

Thus, given $n\in \bb{N}$, there exists $N\in \bb{N}$ such that
$\chi_{\qc}(G) = \chi_{\qc}^N(G)$ for all graphs on at most $n$ vertices.
\end{theorem}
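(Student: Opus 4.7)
The plan has three ingredients: an easy monotonicity observation, a compactness argument driven by Lemma~\ref{l_compact}, and a finiteness observation for the uniform statement.

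For the chain of inequalities I would first record the nesting $S_{N+1,n,c}\subseteq S_{N,n,c}$ (immediate from $\cl A_N\subseteq \cl A_{N+1}$), so any quantum $(N+1,c)$-colouring is automatically a quantum $(N,c)$-colouring, giving $\chi_{\qc}^N(G)\leq \chi_{\qc}^{N+1}(G)$. For $\chi_{\qc}^{N+1}(G)\leq \chi_{\qc}(G)$ I would take $c=\chi_{\qc}(G)$ and, via Corollary~\ref{c_qcita}, a state $s\in S$ satisfying (\ref{cc}); since $S\subseteq S_N$ for every $N$, the same $s$ witnesses $\chi_{\qc}^N(G)\leq c$.

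Next I would prove the limit. The sequence $c_N:=\chi_{\qc}^N(G)$ is non-decreasing and bounded above by $\chi_{\qc}(G)\leq \chi(G)\leq |V(G)|$, so it stabilises at some integer $c^{*}\leq \chi_{\qc}(G)$. For each sufficiently large $N$, fix a quantum $(N,c^{*})$-colouring $s_N\in S_{N,n,c^{*}}$. By Lemma~\ref{l_compact} a subsequence $(s_{N_k})_k$ converges pointwise to some $s\in S$, with $s(1)=1$ inherited from the approximants. The vanishing conditions $s_{N_k}(e_{v,i}f_{v,j})=0$ (for $i\ne j$) and $s_{N_k}(e_{v,i}f_{w,i})=0$ (for $(v,w)\in E$) pass to pointwise limits, so $s$ itself satisfies (\ref{cc}); Corollary~\ref{c_qcita} then forces $\chi_{\qc}(G)\leq c^{*}$, hence $c^{*}=\chi_{\qc}(G)$.

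For the uniform statement, the collection of graphs on vertex sets contained in $\{1,\dots,n\}$ is finite. For each such $G$ the convergence just established furnishes an index $N_G$ with $\chi_{\qc}^{N_G}(G)=\chi_{\qc}(G)$; by monotonicity $N:=\max_G N_G$ works simultaneously for every graph on at most $n$ vertices. The main obstacle is essentially absorbed into Lemma~\ref{l_compact}; once that diagonal extraction is in hand the only subtlety is that the normalisation $s(1)=1$ and the linear vanishing relations survive pointwise limits of functionals, which is automatic.
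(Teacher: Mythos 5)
Your argument is correct and follows essentially the same route as the paper's: nesting $S_{N+1,n,c}\subseteq S_{N,n,c}$ and $S\subseteq S_{N,n,c}$ for monotonicity, stabilisation of a bounded non-decreasing integer sequence, diagonal extraction via Lemma~\ref{l_compact} combined with Corollary~\ref{c_qcita} to close the gap, and finiteness of the set of graphs on at most $n$ vertices for the uniform statement. The only difference is cosmetic: you spell out explicitly that $s(1)=1$ and the vanishing relations (\ref{cc}) survive pointwise limits, which the paper leaves implicit in the phrase ``By Lemma~\ref{l_compact} and Corollary~\ref{c_qcita}.''
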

\begin{proof}
Let $c = \chi_{\qc}^{N+1}(G)$ and $s\in S_{N+1,n,c}$
be a quantum $N,c$-colouring of $G$. Then $s\in S_{N,n,c}$.
Since $\chi_{\qc}^N(G)$ is the minimum of all $c$ for which there exists a
quantum $N,c$-colouring for $G$, we have that
$\chi_{\qc}^N(G)\leq \chi_{\qc}^{N+1}(G)$.

A similar argument, using the fact that $S\subseteq S_{N,n,c}$, shows the second inequality.
It follows that the sequence $(\chi_{\qc}^N(G))_{N=1}^{\infty}$ stabilises, that is,
there exist $c,N_0\in \bb{N}$ such that $\chi_{\qc}^N(G) = c$ for all $N\geq N_0$.
For each $N\geq N_0$, let $s_N\in S_N$ be a quantum $N,c$-colouring of $G$.
By Lemma \ref{l_compact} and Corollary \ref{c_qcita}, $\chi_{\qc}(G)\leq c$.

Finally, since the sequence stabilises for each graph and there are only 
finitely many graphs on at most $n$ vertices, there exists $N \in \bb N$ such that 
$\chi_{\qc}^N(G) = \chi_{\qc}(G)$, for all graphs on at most $n$ vertices.
\end{proof}

\begin{remark}\label{Nndefn}  Let $N_n$ be the least integer such that $\chi_{\qc}^{N_n}(G) = \chi_{\qc}(G)$ for all graphs on at most $n$ vertices. Because we obtain the integer $N_n$ by a 
compactness argument, we do not have effective bounds on $N_n$.  
In particular, we do not know if $\sup_n \, N_n$ is bounded or have any information on its growth rate.
\end{remark}

\section{An SDP for the commuting quantum chromatic number}
\label{s_sdp}
In this section we prove that for each graph $G=(V,E)$ determining if 
$\chi_{\qc}(G) \le c$ is decidable by a semidefinite programming problem.

We fix a graph $G = (V,E)$, let $n = |V|$ and let $N := N_n$ be the natural number given by Remark~\ref{Nndefn}. Let $c\in \bb{N}$.  
We set $C^N= C^N(n,c) = \{ (s(e_{v,i}f_{w,j}): s \in S_{N} \} \subseteq M_{nc}$.

Recall that a set is called a \emph{spectrahedron} if it can be realised as the 
intersection of the set of positive semidefinite matrices of some size with an affine subset.

Recall that the graph functional $L_{G,c}: M_{nc} \to \bb C$ is defined by
\[ 
  L_{G,c}\big( (a_{v,i,w,j}) \big) = \sum_{i \ne j, v} a_{v,i,v,j} + \sum_{i, v\sim w} a_{v,i,w,i}.
\]

When the value of $c$ is understood, we will often write $L_G$ for $L_{G,c}$.

\begin{prop} For any $N \in \bb N$, the problem of minimising 
$L_{G,c}$ over the set $C^N$ is an SDP.
\end{prop}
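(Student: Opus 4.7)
The plan is to realise the feasible set $C^N$ as the coordinate projection of a finite-dimensional spectrahedron, so that minimising the linear functional $L_{G,c}$ over $C^N$ lifts to minimising a linear functional over an affine slice of the positive semidefinite cone, which is by definition a semidefinite program. The crucial input is the characterisation already established in Theorem~\ref{th_compl}(ii).

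First I would record that $\Gamma_N$ is a finite set (words of length at most $N$ in the $2nc$ generators $\cl E\cup\cl F$), so matrices indexed by $\Gamma_N\times\Gamma_N$ live in a fixed finite-dimensional matrix algebra. By Theorem~\ref{th_compl}(ii), $A=(a_{v,i,w,j})\in C^N$ if and only if there is a positive semidefinite $B=(b_{\alpha,\beta})_{\alpha,\beta\in\Gamma_N}\in\cl M_N$ with $b_{1,1}=1$ and $b_{\beta,\alpha}=a_{v,i,w,j}$ for some (equivalently, every) pair $(\alpha,\beta)\in\Gamma_N\times\Gamma_N$ satisfying $\beta^*\alpha=e_{v,i}f_{w,j}$; well-definedness of this read-off map is precisely the content of the condition $B\in\cl M_N$.

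Next I would verify that all remaining constraints are linear or affine. The membership $B\in\cl M_N$ is defined by a finite list of homogeneous linear equalities among the entries of $B$ (one for every identity $\sum_k\lambda_k\beta_k^*\alpha_k\in\cl R$ with $\alpha_k,\beta_k\in\Gamma_N$); the normalisation $b_{1,1}=1$ is a single affine equality; the read-off conditions $a_{v,i,w,j}=b_{\beta,\alpha}$ are linear equalities tying the entries of $A$ to selected entries of $B$; and positive semidefiniteness is the standard SDP cone constraint. The objective $L_{G,c}(A)=\sum_{i\ne j,v}a_{v,i,v,j}+\sum_{i,v\sim w}a_{v,i,w,i}$ is manifestly linear in $A$, hence, after substitution, linear in $B$.

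Putting these observations together, the optimisation of $L_{G,c}$ over $C^N$ is equivalent to optimising a linear functional of $B\in M_{|\Gamma_N|}$ subject to $B\succeq 0$, finitely many linear equalities, and the affine constraint $b_{1,1}=1$, which is precisely an SDP. There is no substantive obstacle here: the proposition is essentially an unpacking of Theorem~\ref{th_compl}(ii), and the only point requiring a word of care is the consistency of the read-off $B\mapsto A$, which is built into the definition of $\cl M_N$.
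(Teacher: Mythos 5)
Your proof is correct and follows essentially the same route as the paper: both lift the optimisation via Theorem~\ref{th_compl}(ii) to the finite-dimensional spectrahedron of positive semidefinite matrices $B\in\cl M_N$ with $b_{1,1}=1$, observe that the defining constraints of $\cl M_N$ are linear, and extend $L_{G,c}$ to a linear functional on this spectrahedron. Your write-up is a somewhat more explicit unpacking of the same argument, but contains no new ideas beyond the paper's proof.
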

\begin{proof}
 Recall that $\cl M_N$ is a space of  $|\Gamma_N| \times |\Gamma_N|$ matrices defined by some linear constraints. Thus, the set $\cl B_N$ of positive semidefinite matrices $B \in \cl M_N$ with $b_{1,1}=1$ is a spectrahedron.  By Theorem~\ref{th_compl} (ii), every element of $C^N$ is the restriction of such a matrix $B$ in $\cl M_N$  to 
some of its components. Thus, if we extend $L_{G,c}$ to a linear functional $F$ on $\cl B_N$ by setting it equal to 0 on all the components not included in $C^N$ then we see that minimizing $F$ over $\cl B_N$ is the same as minimizing $L_{G,c}$ over $C^N$. 
\end{proof}

\begin{thm} Let $G=(V,E)$ be a graph on $n$ vertices and let 
$N= max \{ N_n, 2\}$, where $N_n$ is defined in Remark~\ref{Nndefn}. 
Then $\chi_{\qc}(G) \le c$ if and only if  $\inf \{L_{G,c}(A): A \in C^N(n,c)\} = 0$.  
Hence, the problem of determining if $\chi_{\qc}(G) \le c$ is solvable by this SDP.
\end{thm}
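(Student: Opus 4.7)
The plan is to combine the SDP reformulation (already established by the previous proposition) with the stabilization result Theorem~\ref{th_ine} and the non-negativity of entries of matrices in $C^N(n,c)$. Since the previous proposition shows that minimising $L_{G,c}$ over $C^N(n,c)$ is an SDP, the ``Hence'' clause is immediate from the biconditional, so I focus on the biconditional itself.

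First I would establish that $L_{G,c}(A)\geq 0$ for every $A\in C^N(n,c)$, with equality if and only if the particular entries $a_{v,i,v,j}$ ($i\neq j$) and $a_{v,i,w,i}$ ($(v,w)\in E$) all vanish. This reduces to showing $a_{v,i,w,j}=s(e_{v,i}f_{w,j})\geq 0$ for every $s\in S_N$. Since $e_{v,i}$ and $f_{w,j}$ are commuting self-adjoint idempotents, their product is itself an idempotent, namely $p^*p$ with $p=e_{v,i}f_{w,j}\in \cl A_2$; this is precisely where the condition $N\geq 2$ enters, ensuring $p\in \cl A_N$ and hence $s(p^*p)\geq 0$.

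Next I would prove the forward direction: if $\chi_{\qc}(G)\leq c$, then by Corollary~\ref{c_qcita} there exists $s\in S$ verifying the colouring conditions, and the restriction $s|_{\cl A_N}$ lies in $S_N$ (since $S\subseteq S_N$). The corresponding matrix $A\in C^N(n,c)$ satisfies $L_{G,c}(A)=0$ by construction, so the infimum, which is bounded below by $0$ by the previous step, equals $0$.

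The converse direction is the one requiring the main technical ingredient. Suppose the infimum equals $0$. I would invoke compactness of $C^N(n,c)$: by Lemma~\ref{l_bound} every entry of a matrix $B\in \cl M_N$ arising from an $s\in S_N$ is bounded by $1$, so the spectrahedron $\cl B_N$ of completions is a bounded closed subset of a finite-dimensional space, hence compact; its projection $C^N(n,c)$ onto the $nc\times nc$ block is therefore also compact. Thus the infimum is attained by some $A\in C^N(n,c)$ with $L_{G,c}(A)=0$. Because each summand in $L_{G,c}(A)$ is non-negative, this forces all relevant entries to vanish, producing a quantum $N,c$-colouring in the sense of Definition~\ref{d_Ncol}, so $\chi_{\qc}^N(G)\leq c$. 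Finally, since $N\geq N_n$, Remark~\ref{Nndefn} and Theorem~\ref{th_ine} give $\chi_{\qc}(G)=\chi_{\qc}^N(G)\leq c$, completing the argument.

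The only non-routine step is verifying compactness of $C^N(n,c)$, but this is dispatched cleanly using Lemma~\ref{l_bound} to bound entries of the completion, after which finite-dimensionality finishes the job; the rest is a bookkeeping exercise that ties together the NPA-style hierarchy of Section~\ref{s_qrc} with the definition of $N_n$.
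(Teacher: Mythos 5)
Your proof follows essentially the same route as the paper's: establish non-negativity of the entries of $C^N(n,c)$ via $s(e_{v,i}f_{w,j})=s\bigl((e_{v,i}f_{w,j})^*(e_{v,i}f_{w,j})\bigr)\ge 0$ (using $N\ge 2$), then use compactness of $C^N(n,c)$ so that a zero infimum is attained and yields a quantum $N,c$-colouring, and finish with $\chi_{\qc}^N(G)=\chi_{\qc}(G)$ from the choice $N\ge N_n$. The only difference is that you spell out the compactness of $C^N(n,c)$ via Lemma~\ref{l_bound} and the spectrahedron $\cl B_N$, which the paper asserts without detail; this is a correct and welcome elaboration, not a different argument.
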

\begin{proof}  
For any $s \in S_N$, any $v,w \in V$,  any $1 \le i,j \le c$, since $N \ge 2$, we have that
\[ 0 \le s( (e_{v,i}f_{w,j})^*(e_{v,i}f_{w,j})) = s(e_{v,i}f_{w,j}).\]
Thus, all the elements of $C^N$ are non-negative and, consequently, $L_{G,c}(A) \ge 0$ on $C^N$. 

Since $C^N$ is a compact set the infimum is 0 if and only if it is attained at 
some matrix $A\in C^N$, but in that case we have that $A$ is the image of a 
state that defines a quantum $N,c$-colouring  and
so $\chi_{\qc}(G) = \chi_{\qc}^N(G) \le c$. 
\end{proof}

Note that since $C^N$ is a compact set, the above infimum is actually a minimum.

\begin{remark}  It is known that computing $\chi_{\q}(G)$ is an NP-hard problem \cite{ji}, 
but it is not known if computing $\chi_{\qc}(G)$ or $\chi_{\qa}(G)$ is NP-hard. A proof that did not rely on Tsirelson's or Connes' conjecture that these are also NP-hard would be interesting. A proof that either of these is of complexity P would be a dramatic result. It would show that either the corresponding conjecture is false or that P=NP.
\end{remark}

We can strengthen the above result a bit as follows.

\begin{thm} 
For each $n\in \bb{N}$, there is a constant $\epsilon_n>0$ such that if 
$G=(V,E)$ is a graph on $n$ vertices and $N= \max \{ 2, N_n \}$, then $\chi_{\qc}(G) \le c$ if and only if
$\inf \{ L_{G,c}(A): A \in C^N \} < \epsilon_n$. 
\end{thm}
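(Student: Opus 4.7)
The forward direction ($\chi_{\qc}(G)\le c$ implies the infimum is $<\epsilon_n$) is automatic from the preceding theorem, which gives the infimum equal to $0$, as soon as one knows $\epsilon_n>0$. All the content lies in the reverse direction, which I would prove by contraposition: I will produce $\epsilon_n>0$ such that whenever $\chi_{\qc}(G) > c$, the infimum is at least $\epsilon_n$.

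Fix a graph $G$ on $n$ vertices and a natural number $c$ with $\chi_{\qc}(G) > c$. Because $N = \max\{2, N_n\}$, Remark~\ref{Nndefn} and the preceding theorem yield $\chi_{\qc}(G) = \chi_{\qc}^N(G)$, so no quantum $N,c$-colouring of $G$ exists; equivalently, $L_{G,c}(A) > 0$ for every $A \in C^N(n,c)$ (since $N\ge 2$, every entry $s(e_{v,i}f_{w,j}) = s((e_{v,i}f_{w,j})^*(e_{v,i}f_{w,j}))$ is nonnegative, so $L_{G,c}\ge 0$, and vanishing at some $A$ would produce a colouring). Since $C^N(n,c)$ is compact and $L_{G,c}$ is continuous, the infimum is attained, and I may define
\[
\delta(G,c) := \min\{L_{G,c}(A) : A \in C^N(n,c)\} > 0.
\]

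Next I would observe that only finitely many pairs $(G,c)$ contribute. There are only finitely many graphs on $n$ vertices, and for each such $G$ one has $\chi_{\qc}(G) \le \chi(G) \le n$, so the set of $c\in \bb{N}$ with $\chi_{\qc}(G) > c$ is contained in $\{1,\dots,n-1\}$. Setting
\[
\epsilon_n := \min\{\delta(G,c) : G \text{ a graph on } n \text{ vertices},\ c < \chi_{\qc}(G)\},
\]
we obtain a minimum of finitely many positive numbers, hence $\epsilon_n > 0$. By construction, whenever $\chi_{\qc}(G) > c$ and $G$ has $n$ vertices, $\inf\{L_{G,c}(A) : A \in C^N\} = \delta(G,c) \ge \epsilon_n$, which is exactly the contrapositive of the desired implication. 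Note also that $\epsilon_n$ depends only on $n$, as required.

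There is no real obstacle: the argument is a clean combination of the previous theorem (which gives strict positivity of $L_{G,c}$ on the relevant $C^N$), compactness of $C^N$ (to turn strict positivity into a strictly positive minimum), and the finiteness of the collection of graphs on $n$ vertices together with the bound $\chi_{\qc}(G)\le n$ (to turn finitely many positive minima into a uniform positive lower bound $\epsilon_n$). The one subtlety worth flagging is that $\epsilon_n$ is produced non-constructively via the same compactness route that produced $N_n$, so the proof gives no explicit estimate on $\epsilon_n$.
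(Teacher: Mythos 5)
Your proof is correct and takes essentially the same route as the paper: strict positivity of $L_{G,c}$ on $C^N$ when $\chi_{\qc}(G)>c$, compactness of $C^N$ to turn that into a strictly positive minimum, and finiteness to extract a uniform $\epsilon_n$. You have actually filled a small gap the paper leaves implicit: the paper's proof reads as though $c$ is fixed (``the graphs on $n$ vertices split into two subsets: those for which $\chi_{\qc}(G)\le c$, and those for which $\chi_{\qc}(G)>c$'') and takes $\epsilon_n=\min\{b_G\}$ over only the graphs, which would make $\epsilon_n$ depend on $c$; you correctly observe that one must minimize over pairs $(G,c)$, that $\chi_{\qc}(G)\le\chi(G)\le n$ bounds the relevant $c$ to $\{1,\dots,n-1\}$, and that $N=\max\{2,N_n\}$ is independent of $c$, so the double minimum is still over a finite set. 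This makes your version more precise than the published one while following the identical strategy.
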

\begin{proof}
The graphs on $n$ vertices split into two subsets: 
those for which $\chi_{\qc}(G) \le c$, and those for which $\chi_{\qc}(G) > c$.   
For each graph in the latter set, we have that $\inf \{ L_{G,c}(A): A \in C^N \} = b_G >0$.   
Since there are only finitely many such graphs, we may set $\epsilon_n = \min \{ b_G \}$ 
over this set of graphs.
\end{proof}

The above result is not of much computational use without 
estimates for $\epsilon_n$, but it might be of theoretical use.  
Since we know that we only need to get the SDP within $\epsilon_n$ 
this may give us a crude operation/complexity count.

\section{Synchronous states and the tracial rank}
\label{s_tre} 
D.\ Roberson and L.\ Man\v{c}inska~\cite{arxiv:1212.1724} introduced 
the projective rank $\xi_{\f}(G)$ of a graph $G$ 
and showed that $\xi_{\f}(G) \le \chi_{\q}(G)$.  
This lower bound has been crucial for computing the quantum chromatic numbers of some graphs.  
Unfortunately, the proof of this estimate uses in a critical way the fact that 
the involved representations are finite dimensional and, 
consequently, we do not know if it is also a lower bound for $\chi_{\qc}(G)$.  To ameliorate this situation, we will develop an analogous quantity that is better suited to work with infinite dimensional representations.  

Our first result applies to a larger family of games than the graph colouring game, namely games 
where Alice and Bob have the same set of inputs $V$ and require that when Alice and Bob receive the same input, then they must produce the same output. 
Given a correlation $(p(i,j|v,w))_{v,i,w,j}$, set
$$p(i = j|v,w) := \sum_{i=1}^c p(i,i|v,w).$$ 
Then a perfect strategy for this game means 
$p(i=j|v=w) =1$, that is, $p(i = j|v,v) = 1$ for all $v\in V$.
Our result can be summarised as saying that such correlations always arise from tracial states. 
Recall that a state $s$ on a C*-algebra $\cl A$ 
is called tracial provided that $s(xy) = s(yx)$ for all $x,y\in \cl A$. 
We show that, in fact, the projective rank of a graph can be described
by using tracial states on finite dimensional C*-algebras.

\begin{defn} For ${\rm x} \in \{{\rm loc},  {\rm q}, {\rm qa}, {\rm qc}\}$, 
we call a correlation $(p(i,j|v,w))$ from $C_{\rm x}(n,c)$ \emph{synchronous} 
if it satisfies the condition $p(i=j| v=w) =1$, and let 
$C^s_{\rm x}(n,c)\subseteq C_{\rm x}(n,c)$  
denote the subset of all synchronous correlations. 
\end{defn}

Note that $C_{\rm q}^s(n,c) = C^s_{\cc}(n,c) \cap C_{\rm q}(n,c)$. 

\begin{defn}\label{d_real}
A \emph{realisation} of an element $(p(i,j|v,w))_{v,i,w,j}$ of 
$C_{\cc}(n,c)$ is a tuple
$\Big(\big((E_{v,i})_{i=1}^c\big)_{v \in V}, \big((F_{w,j})_{j=1}^c \big)_{w \in V},\cl H,\eta \Big)$, where      
$V$ is an index set of cardinality $n$,
$\cl H$ is a Hilbert space, 
$\eta \in \cl H$ is a unit vector, and  $E_{v,i}, F_{w,j} \in \cl B(\cl H)$ are projections satisfying
\begin{itemize}
\item[(i)] $p(i,j|v,w) = \langle E_{v,i}F_{w,j} \eta, \eta \rangle, \ \ v,w\in V,  \ i,j = 1,\dots,c$;
\item[(ii)] $\sum_{i=1}^c E_{v,i} = \sum_{j=1}^c F_{w,j} = I, \ \ v,w\in V$;
\item[(iii)] $E_{v,i}F_{w,j} = F_{w,j} E_{v,i}, \ \ v,w\in V,  \ i,j = 1,\dots,c$. 
\end{itemize}
\end{defn}

When $n$ and $c$ are understood, to avoid excessive notation, we will often denote a realisation by simply  $\big( (E_{v,i}), (F_{w,j}), \cl H, \eta \big)$.

\begin{thm}\label{findimrealisation} A correlation $((p(i,j|v,w))_{v,i,w,j}$ belongs to $C_{\rm q}(n,c)$
if and only if it has a realisation
$\big((E_{v,i}), (F_{w,j}), \cl H,\eta \big)$ for which $\cl H$ is 
finite dimensional. 
\end{thm}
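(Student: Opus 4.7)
The proof splits naturally into two directions.

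\emph{The ``only if'' direction.} Suppose $(p(i,j|v,w))\in C_{\rm q}(n,c)$, so the correlation has the form $\langle (E_{v,i}\otimes F_{w,j})\xi,\xi\rangle$ for POVMs on $\bb{C}^p$ and $\bb{C}^q$ and a unit vector $\xi\in \bb{C}^p\otimes \bb{C}^q$. The plan is to apply Naimark's dilation theorem in finite dimensions to each side separately: each POVM $(E_{v,i})_{i=1}^c$ admits a PVM dilation on a larger finite-dimensional space, and by assembling these (for instance, by tensoring with ancilla spaces indexed by $v$) one obtains PVMs $(\tilde E_{v,i})$ on a single finite-dimensional $\tilde{\cl{H}}_A\supseteq \bb{C}^p$ that compress back to the original POVMs. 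Constructing $(\tilde F_{w,j})$ on some finite-dimensional $\tilde{\cl{H}}_B\supseteq \bb{C}^q$ in the same way, set $\cl{H}=\tilde{\cl{H}}_A\otimes \tilde{\cl{H}}_B$, $\eta=\xi$ viewed inside $\cl{H}$, and $E'_{v,i}=\tilde E_{v,i}\otimes I$, $F'_{w,j}=I\otimes \tilde F_{w,j}$. These are commuting projections on a finite-dimensional Hilbert space, and a short computation using the compression relations together with $P\xi=\xi$ (where $P$ is the projection onto $\bb{C}^p\otimes \bb{C}^q$) yields the desired realisation.

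\emph{The ``if'' direction.} Given a finite-dimensional realisation $((E_{v,i}),(F_{w,j}),\cl{H},\eta)$ with commuting projections on $\cl{H}$, let $\cl{A}$ and $\cl{B}$ be the unital C*-subalgebras of $\cl{B}(\cl{H})$ generated by the $E$'s and the $F$'s respectively; both are finite-dimensional and commute. Because $\cl{A}$ and $\cl{B}$ commute and finite-dimensional C*-algebras admit a unique C*-tensor norm, the multiplication map $a\otimes b\mapsto ab$ extends to a unital *-homomorphism $\mu:\cl{A}\otimes \cl{B}\to \cl{B}(\cl{H})$, and pulling back the vector state $\omega_{\eta}$ gives a state $\phi$ on $\cl{A}\otimes \cl{B}$ with $\phi(a\otimes b)=\langle ab\eta,\eta\rangle$. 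Fixing faithful unital *-representations $\cl{A}\subseteq \cl{B}(\cl{H}_A)$ and $\cl{B}\subseteq \cl{B}(\cl{H}_B)$ with $\cl{H}_A,\cl{H}_B$ finite-dimensional, Arveson's extension theorem extends $\phi$ to a state $\Phi$ on $\cl{B}(\cl{H}_A)\otimes \cl{B}(\cl{H}_B)\cong \cl{B}(\cl{H}_A\otimes \cl{H}_B)$. The state $\Phi$ corresponds to a density matrix on $\cl{H}_A\otimes \cl{H}_B$, which we purify to a vector $\zeta\in (\cl{H}_A\otimes \cl{H}_A)\otimes (\cl{H}_B\otimes \cl{H}_B)$ after reordering the purifying factors. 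Taking the POVMs $E_{v,i}\otimes I$ on $\cl{H}_A\otimes \cl{H}_A$ and $F_{w,j}\otimes I$ on $\cl{H}_B\otimes \cl{H}_B$, one verifies
\[
\langle (E_{v,i}\otimes I\otimes F_{w,j}\otimes I)\zeta,\zeta\rangle=\Phi(E_{v,i}\otimes F_{w,j})=\phi(E_{v,i}\otimes F_{w,j})=p(i,j|v,w),
\]
which places the correlation in $C_{\rm q}(n,c)$.

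The main technical obstacle is the ``if'' direction, specifically arranging the purification of $\Phi$ so that the auxiliary ancillas can be grouped into ``Alice's side'' and ``Bob's side'' — one needs to choose both the extension and the purification so that the final vector lies in a bipartite tensor product with the $E$-operators acting trivially on the Bob factor and the $F$-operators acting trivially on the Alice factor. Everything else reduces to standard finite-dimensional tools (Naimark dilation, purification, Arveson's extension theorem), and the crucial feature of finite dimensions used throughout is the natural isomorphism $\cl{B}(\cl{H}_A)\otimes \cl{B}(\cl{H}_B)\cong \cl{B}(\cl{H}_A\otimes \cl{H}_B)$ together with the uniqueness of the C*-tensor norm on finite-dimensional C*-algebras.
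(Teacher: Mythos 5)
Your proof is correct. The ``only if'' direction (Naimark-dilating each POVM to a PVM one input at a time, keeping the spaces finite dimensional so that previously dilated PVMs remain PVMs) is exactly the paper's argument. Your ``if'' direction, though, takes a genuinely different route. The paper uses the structure theory of the two commuting finite-dimensional subalgebras $\cl C$, $\cl D$ of $\cl B(\cl H)$: it writes $\cl H = \bigoplus_s \bb{C}^{k_s}\otimes\bb{C}^{l_s}$ with $\cl C = \bigoplus_s M_{k_s}\otimes 1$ and $\cl D\subseteq\bigoplus_s 1\otimes M_{l_s}$, and then pads to embed $\cl H$ into a single $\bb{C}^{K}\otimes\bb{C}^{L}$ so that each $E_{v,i}$ becomes $E'_{v,i}\otimes I_{L}$, each $F_{w,j}$ becomes $I_{K}\otimes F'_{w,j}$, and $\eta$ is still the state vector. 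Your argument instead passes through the abstract tensor product $\cl A\otimes\cl B$ (unambiguous by nuclearity), extends the induced vector state to $\cl B(\cl H_A)\otimes\cl B(\cl H_B)\cong\cl B(\cl H_A\otimes\cl H_B)$ via Krein/Arveson, purifies the resulting density matrix with a bipartite ancilla, and reorders the tensor factors. Both routes are valid: the paper's is more concrete and avoids state extension and purification entirely, while yours is softer and makes explicit which finite-dimensional facts actually get used (unique C*-tensor norm, density-matrix form of states, finite purification). Your remark that the purifying ancilla must itself be chosen in bipartite form $\cl H_A\otimes\cl H_B$ --- so the final reordering leaves Alice's and Bob's operators acting on disjoint tensor factors --- is precisely the point that makes this version go through.
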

\begin{proof} 
Suppose that $((p(i,j|v,w))_{v,i,w,j}$ is a correlation which possesses 
a realisation $\big((E_{v,i}), (F_{w,j}), \cl H,\eta \big)$ for which $\cl H$ is 
finite dimensional.
We now essentially recall the argument from
the unpublished preprint \cite[Theorem~1]{SW} which allows us to pass to spacial tensoring.
Let $\cl C$ (resp. $\cl D$) be the C*-algebra generated by $\{E_{v,i} : v\in V, i = 1,\dots,c\}$
(resp. $\{F_{w,j} : w\in V, j = 1,\dots,c\}$). 
Since $\cl C$ is finite dimensional, we may assume, without loss of generality,
that $H = \oplus_{s=1}^t \bb{C}^{k_s}\otimes \bb{C}^{l_s}$ and 
$\cl C = \oplus_{s=1}^t M_{k_s}\otimes 1_{l_s}$. 
Since $\cl D$ and $\cl C$ commute, 
we have that $\cl D$ is contained in the C*-algebra 
$\oplus_{s=1}^t 1_{k_s}\otimes M_{l_s}$. 
Thus, $E_{v,i} = \oplus_{s=1}^t E^{s}_{v,i}\otimes I_{l_s}$ and 
$F_{w,j} = \oplus_{s=1}^t I_{k_s}\otimes F_{w,j}^{s}$ for some projections 
$E^{s}_{v,i}\in M_{k_s}$, $F_{w,j}^{s}\in M_{l_s}$. 
Now let $k = \max\{k_1,\dots,k_t\}$ and $l = \max\{l_1,\dots,l_t\}$. 
Consider the Hilbert space $H$ as a subspace, in the natural way, 
of $\oplus_{s=1}^t \bb{C}^{k}\otimes \bb{C}^{l}$, and identify the latter space 
with $\bb{C}^{k}\otimes \bb{C}^{lt}$. Under these identifications, 
the projections $E_{v,i}$ (resp. $F_{w,j}$) have the form
$E_{v,i} = E_{v,i}'\otimes I_{lt}$ (resp. $F_{w,j} = I_{k}\otimes F_{w,j}'$), 
for some projections $E_{v,i}'$ (resp. $F_{w,j}'$) on $\bb{C}^{k}$
(resp. $\bb{C}^{lt}$). 
It follows that $((p(i,j|v,w))_{v,i,w,j}\in C_{\rm q}(n,c)$.

Conversely, assume that $(p(i,j|v,w)) \in C_{\rm q}(n,c)$.  
Then there exist finite dimensional Hilbert spaces $\cl H_A$ and $\cl H_B$,   
POVM's  $\big( (P_{v,i})_{i=1}^c \big)_{v \in V}$ on $\cl H_A$,
POVM's $\big( (R_{w,j})_{j=1}^c \big)_{w \in V}$ on $\cl H_B$ 
and a unit vector $\eta \in \cl H_A \otimes \cl H_B$ such that 
$p(i,j|v,w) = \langle  P_{v,i} \otimes R_{w,j} \eta, \eta \rangle.$

For convenience, set $V= \{1,\dots,n \}$. Let $\tilde{\cl H}_A = \cl H_A \otimes \bb C^c$, 
regarded as the direct sum of $c$ copies of $\cl H_A$.
Note that $\tilde{\cl H}_A$ is still finite dimensional and define an inclusion  
$W: \cl H_A \to \tilde{\cl H}_A$ via $h \to \big( P_{1,1}^{1/2}h,\dots, P_{1,c}^{1/2}h \big).$ 
The fact that $( P_{1,i} )_{i=1}^c$ is a POVM implies that this inclusion is an isometry. 

Define operators on $\tilde{\cl H}_A$ by setting:
$\tilde{P}_{1,i} = I_{\cl H_A} \otimes E_{i,i}$ 
(where $E_{i,i}$ here denotes the corresponding diagonal matrix unit on $\bb{C}^c$) 
and for $v \ne 1,$ let 
$\tilde{P}_{v,i}$ be the operator matrix, with $(k,l)$-entry, 
\[ \tilde{P}_{v,i} = \big( P_{1,k}^{1/2} P_{v,i} P_{1,l}^{1/2} \big) , i \ne 1,\]
and 
\[ \tilde{P}_{v,1} = \big( P_{1,k}^{1/2} P_{v,1} P_{1,l}^{1/2} \big) +(I_{\tilde{\cl H}} - WW^*). \]
Note that this standard dilation trick turns the POVM $(P_{1,i})_{i=1}^c$ into a 
PVM $(\tilde{P}_{1,i})_{i=1}^c$, and turns each 
POVM $(P_{v,i})_{i=1}^c$, $v\neq 1$, into a new POVM $(\tilde{P}_{v,i})_{i=1}^c$ on the larger space. 
Moreover, $W^*\tilde{P}_{v,i} W = P_{v,i}.$

Also, note that for $i \ne 1$,
\[ \tilde{P}_{v,i}^2 = \sum_{t=1}^c  \big( P_{1,k}^{1/2} P_{v,i} P_{1,t}^{1/2}P_{1,t}^{1/2} P_{v,i} P_{1,l}^{1/2} \big) 
= \big( P_{1.k}^{1/2} P_{v,i}^2 P_{1,l}^{1/2} \big).\]  Similarly, 
\[ \tilde{P}_{v,1}^2 = \big( P_{1,k}^{1/2} P_{v,1}^2 P_{1,l}^{1/2} \big) + (I_{\tilde{\cl H}} - WW^*).\]
Thus, if $P_{v,i}$ is a projection, then it's dilation $\tilde{P}_{v,i}$ is also a projection.  

Hence, if any $\big( P_{v,i} \big)_{i=1}^c$ is already a PVM, that property is preserved by the dilation.
It follows that if we repeat this standard dilation trick $n$ times, 
once for each $v$, then we will obtain a family of PVM's $\big( \hat{P}_{v,i} \big)_{i=1}^c$, $1 \le v \le n,$ on the finite dimensional Hilbert space $\hat{\cl H}_A =\cl H_A \otimes \bb C^{2^nc}$ and an isometric embedding  $W_A:\cl H_A \to \hat{H}_A$ such that
$W_A^*\hat{P}_{v,i} W_A = P_{v,i}.$ 

Repeating the same process for the POVM's $\big( R_{w,j} \big)$ on $\cl H_B,$ we obtain a family of PVM's $\big( \hat{R}_{w,j} \big)$ on a finite dimensional space $\hat{\cl H}_B$ and an isometry $W_B: \cl H_B \to \hat{\cl H}_B$ such that $R_{w,j}= W_B^* \hat{R}_{w,j} W_B.$

Finally,  $\langle (\hat{P}_{v,i} \otimes I)(I \otimes \hat{R}_{w,j}) (W_A \otimes W_B) \eta, (W_A \otimes W_B) \eta \rangle =p(i,j|v,w)$ so that  
$$\Big( \big( \hat{P}_{v,i}\otimes I \big), \big( I \otimes \hat{R}_{w,j} \big),  \hat{\cl H}_A \otimes \hat{\cl H}_B,  (W_A \otimes W_B) \eta \Big)$$ 
is a realisation of $(p(i,j|v,w))$ by commuting PVM's on a finite dimensional Hilbert space. It now follows
that $((p(i,j|v,w))_{v,i,w,j}$ is in $C_{\rm q}(n,c)$.
\end{proof}

\begin{remark}
Similarly it can be shown that, $((p(i,j|v,w)) \in C_{\rm loc}(n,c)$ if and only if the realisation can be chosen such that all the operators commute. 
We do not know of an analogous characterisation of correlations in $C_{\rm qa}(n,c).$ 
\end{remark}

\begin{thm}\label{trthm} 
Let $(p(i,j|v,w)) \in C^s_{\cc}(n,c)$ be a
synchronous correlation with realisation 
$\{(E_{v,i})_{i=1}^c, (F_{w,j})_{j=1}^c,\cl H,\eta\}$.  Then
\begin{itemize}
\item[(i)] $E_{v,i} \eta = F_{v,i} \eta, \ \ \ v\in V, i = 1,\dots,c$;
\item[(ii)] $p(i,j|v,w) = \langle E_{v,i}E_{w,j} \eta, \eta \rangle = \langle F_{w,j}F_{v,i} \eta, \eta \rangle = p(j,i|w,v)$
\item[(iii)] The functional $s : X \to \langle X \eta, \eta \rangle$ is a tracial state 
on the C*-algebra generated by the set $\{E_{v,i} : v\in V, i = 1,\dots,c\}$
(resp. $\{F_{w,j} : w\in V, j = 1,\dots,c\}$).
\end{itemize}
Conversely, given a family of projections $\{ e_{v,i}: v \in V, \, 1 \le i \le c \}$ in a unital C*-algebra $\cl A$ such that 
$\sum_{i=1}^c e_{v,i} = I$, $v\in V$, 
and a tracial state $s$ on $\cl A$, then 
$(p(i,j|v,w)) =(s(e_{v,i} e_{w,j}))_{v,i,w,j}$ is in $C_{\cc}^s(n,c)$. That is,  
there exists a Hilbert space $\cl H$, a unit vector $\eta\in \cl H$ and  
mutually commuting POVM's $(E_{v,i})_{i=1}^c$ and $(F_{w,j})_{j=1}^c$ on $\cl H$
which are a realisation of $(s(e_{v,i} e_{w,j}))_{v,i,w,j}$ additionally satisfying
\begin{equation}\label{eq_sexp}
s(e_{v,i} e_{w,j}) = \langle E_{v,i}E_{w,j} \eta, \eta
\rangle = \langle F_{w,j}F_{v,i} \eta, \eta \rangle = \langle E_{v,i}
F_{w,j} \eta, \eta \rangle.
\end{equation}
\end{thm}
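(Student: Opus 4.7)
The plan is to prove the forward implications (i)--(iii) by exploiting synchronicity, which forces certain off-diagonal expectations to vanish, and then to handle the converse via a GNS construction tailored to the tracial state $s$. For (i), the synchronicity condition $\sum_i p(i,i|v,v)=1$ together with the fact that the $p(i,j|v,v)$ form a probability distribution forces $\langle E_{v,i}F_{v,j}\eta,\eta\rangle = 0$ whenever $i\ne j$. Because $E_{v,i}$ and $F_{v,j}$ are commuting projections, their product is itself a projection, so this expectation equals $\|E_{v,i}F_{v,j}\eta\|^2$, forcing $E_{v,i}F_{v,j}\eta = 0$ for $i\ne j$. Writing $E_{v,i}\eta = E_{v,i}\bigl(\sum_j F_{v,j}\bigr)\eta = E_{v,i}F_{v,i}\eta$ and symmetrically $F_{v,i}\eta = E_{v,i}F_{v,i}\eta$ yields (i); part (ii) then follows routinely by substituting $F_{w,j}\eta$ for $E_{w,j}\eta$ (or vice versa) via (i) and using commutativity of the two families with self-adjointness.

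Part (iii) is the main substantive step. The key lemma, which I would prove by induction on word length, asserts that for every word $X = E_{v_1,i_1}\cdots E_{v_k,i_k}$ in the first family of generators, $X\eta = \tilde X\eta$, where $\tilde X := F_{v_k,i_k}\cdots F_{v_1,i_1}$ is the reversed word in the second family. The inductive step peels off the rightmost factor via $E_{v_k,i_k}\eta = F_{v_k,i_k}\eta$ from (i), commutes the new $F_{v_k,i_k}$ past the remaining $E$'s to the left, and invokes the hypothesis. Granted this lemma, for two words $X,Y$ in the $E$'s,
\[ s(XY) = \langle XY\eta,\eta\rangle = \langle X\tilde Y\eta,\eta\rangle = \langle \tilde Y X\eta,\eta\rangle = \langle X\eta,\tilde Y^*\eta\rangle = \langle X\eta, Y^*\eta\rangle = \langle YX\eta,\eta\rangle = s(YX), \]
where the penultimate equality uses the analogue of the lemma applied to $\tilde Y^*$ (a word in the $F$'s whose reversal as a word in the $E$'s is exactly $Y^*$). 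Linear and norm extension yields traciality on the generated C*-algebra, and the case of the algebra generated by the $F$'s is symmetric.

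For the converse, I would apply the GNS construction tailored to $s$: the left kernel $\cl N_s = \{a\in \cl A : s(a^*a)=0\}$ is a two-sided ideal because $s$ is tracial, so $\cl A/\cl N_s$ carries the inner product $\langle a,b\rangle_s := s(b^*a)$ with completion some Hilbert space $\cl H_s$. Left multiplication $\lambda(a)[b]=[ab]$ and ``reversed right multiplication'' $\rho(a)[b]=[ba^*]$ give two $*$-representations of $\cl A$ on $\cl H_s$; the identity $\rho(a)^*=\rho(a^*)$ is precisely where traciality is essential, unpacking to $s(c^*ba^*)=s(a^*c^*b)$. The two representations commute because $\lambda(a)\rho(c)[b] = [abc^*] = \rho(c)\lambda(a)[b]$. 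Setting $E_{v,i} := \lambda(e_{v,i})$, $F_{w,j} := \rho(e_{w,j})$, and $\eta := [1]$ then yields commuting PVM's with $\langle E_{v,i}F_{w,j}\eta,\eta\rangle = s(e_{v,i}e_{w,j})$. The resulting correlation is synchronous because $s(e_{v,i}e_{v,j}) = 0$ for $i\ne j$, so applying (i)--(iii), already proved, delivers the remaining identities in (\ref{eq_sexp}).

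The main obstacle is setting up the word-reversal lemma in (iii) cleanly and seeing how, applied twice (once to $Y$, once to $\tilde Y^*$), it yields traciality without ever having to commute the $E$'s among themselves, which they need not do. A secondary subtlety on the converse side is verifying that $\rho$ is genuinely a $*$-representation: it is here, and essentially only here, that the traciality hypothesis enters the GNS construction, so the argument would not go through for an arbitrary state.
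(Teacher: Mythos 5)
Your proposal is correct and, at the level of global strategy, matches the paper's: both prove traciality from the word-reversal identity $E_{v_1,i_1}\cdots E_{v_k,i_k}\eta = F_{v_k,i_k}\cdots F_{v_1,i_1}\eta$ (proved by the same peel-and-commute induction using (i)), and both obtain the converse by running the GNS construction for $s$ and defining the $F$'s as right multiplications, with traciality being exactly what makes the right action well-defined and bounded. Your recasting of the converse in the standard $\lambda/\rho$ language of the left/right regular representations is cleaner conceptually than the paper's explicit contraction estimate, but it is the same construction. The one genuinely different step is your proof of (i): you observe that, since the $E_{v,i}$ and $F_{v,j}$ are commuting projections, $E_{v,i}F_{v,j}$ is itself a projection and hence $p(i,j\mid v,v)=\|E_{v,i}F_{v,j}\eta\|^2$, so synchronicity ($\sum_i p(i,i\mid v,v)=1$ within a probability distribution) immediately forces $E_{v,i}F_{v,j}\eta=0$ for $i\ne j$, and summing the resolutions of identity gives $E_{v,i}\eta=E_{v,i}F_{v,i}\eta=F_{v,i}\eta$. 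The paper instead runs a two-stage Cauchy--Schwarz chain and analyses the equality cases, which requires an extra argument to rule out a unimodular phase. Your route to (i) is shorter and more transparent; the paper's has the minor virtue of not needing the $E$'s and $F$'s to be projections (only that they be positive contractions with $E_{v,i}F_{v,j}\ge 0$), but since Definition~\ref{d_real} already stipulates projections this generality is not exploited. One small point to make explicit if you write this up: boundedness of $\rho(a)$ also uses traciality, via $s(ab^*ba^*)=s(a^*ab^*b)\le\|a\|^2 s(b^*b)$.
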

\begin{proof} 
Applying the Cauchy-Schwarz inequality, for every $v\in V$, we have the following
chain of identities and inequalities.
\begin{align*}
1 &=  \sum_{i,j=1}^c p(i,j|v,v) = \sum_{i = 1}^c p(i,i|v,v) 
   = \sum_{i=1}^c \langle E_{v,i}F_{v,i} \eta, \eta \rangle\\ 
  &=  \sum_{i=1}^c \langle F_{v,i} \eta, E_{v,i} \eta \rangle 
   \le \sum_{i=1}^c \|F_{v,i} \eta\| \|E_{v,i}   \eta\| \\
  &\le \left(\sum_{i=1}^c \|F_{v,i}\eta \|^2 \right)^{1/2} \left(\sum_{i=1}^c \|E_{v,i} \eta\|^2 \right)^{1/2} \\
  &= \left(\sum_{i=1}^c \langle F_{v,i} \eta, \eta \rangle \right)^{1/2} \left( \sum_{i=1}^c \langle E_{v,i} \eta, \eta \rangle \right)^{1/2} = 1.
\end{align*}
Thus, we must have equality throughout. In particular, 
the equality between the 2nd and 3rd lines implies that the vectors 
$\big( \|F_{v,1} \eta\|,\dots,\|F_{v,c} \eta\|)$ and $(\|E_{v,1} \eta \|,\dots,\|E_{v,c} \eta \| \big)$ are equal. 
Thus, $\|F_{v,i}\eta\| = \|E_{v,i}\eta\|$, $v\in V$, $1 \le i \le c$.  
On the other hand, the equality on the second line implies
that $F_{v,i} \eta = \alpha_i E_{v,i} \eta$, for some $|\alpha_i| =1$, $i = 1,\dots,c$.
If $E_{v,i} \eta \ne 0$ then
\[\alpha_i E_{v,i} \eta = F_{v,i}^2 \eta = F_{v,i}( \alpha_i E_{v,i} \eta) = \alpha_i E_{v,i} F_{v,i} \eta = \alpha_i E_{v,i}(\alpha_i E_{v,i} \eta) = \alpha_i^2 E_{v,i} \eta,\] which forces $\alpha_i =1$.   Thus,
\begin{equation}\label{eq_sameon}
E_{v,i} \eta = F_{v,i} \eta, \ \ \ v\in V, i = 1,\dots,c.
\end{equation}
and so (i) holds.

To prove (ii), note that, by condition (i) of Definition \ref{d_real}, we have
\[ p(i,j|v,w) = \langle E_{v,i} F_{w,j} \eta, \eta \rangle = \langle E_{v,i}E_{w,j} \eta, \eta \rangle.\]
Using condition (iii) of Definition \ref{d_real}, we have
\[\langle E_{v,i} F_{w,j} \eta, \eta \rangle = \langle F_{w,j} E_{v,i} \eta, \eta \rangle 
= \langle F_{w,j} F_{v,i} \eta, \eta \rangle .\]
Finally,
\[p(j,i| w,v) = \langle E_{w,j} E_{v,i} \eta, \eta \rangle = \langle \eta, E_{w,j} E_{v,i} \eta \rangle = \langle E_{v,i} E_{w,j} \eta, \eta \rangle = p(i,j|v,w).\]

Combining (i) with commutativity we have that
\[ E_{v_1,i_1}E_{v_2,i_2} \eta = E_{v_1,i_1} F_{v_2,i_2} \eta = F_{v_2,i_2}E_{v_1.i_1} \eta = F_{v_2,i_2} F_{v_1,i_1} \eta.\] Proceeding inductively, we have the following word reversal:
\begin{equation}\label{word}
 E_{v_1,i_1}E_{v_2,i_2} \cdots E_{v_k,i_k} \eta = F_{v_k, i_k} \cdots F_{v_2,i_2} F_{v_1,i_1} \eta. \end{equation}

To prove (iii),
let $W$ be an operator that is a product of elements of the set 
$\{E_{v,i} : v\in V, i = 1,\dots,c\}$; then
\begin{align*}
  s(E_{v,i}W) &= \langle E_{v,i}W \eta, \eta \rangle 
              = \langle W\eta, E_{v,i} \eta \rangle 
              = \langle W \eta, F_{v,i} \eta \rangle \\ 
              &= \langle F_{v,i}W \eta, \eta \rangle 
              = \langle WF_{v,i} \eta, \eta \rangle 
              = \langle WE_{v,i} \eta, \eta \rangle 
              = s(WE_{v,i}).
\end{align*}
Thus, we have 
\begin{align*}
  s \big( (E_{v_1,i_1}E_{v_2,i_2})W \big) 
    &= s \big( E_{v_1,i_1} (E_{v_2,i_2} W) \big)
     = s \big( E_{v_2,i_2} (WE_{v_1,i_1}) \big) \\ 
    &= s \big( W E_{v_1,i_1} E_{v_2,i_2} \big).
\end{align*}
The general case follows by induction and the fact that the linear combinations of 
the words on the set $\{E_{v,i} : v\in V, i = 1,\dots,c\}$ are dense in 
the C*-algebra generated by this set. 

The proof that $s$ is a tracial state on the C*-algebra generated by the
set $\{F_{w,j} : w\in V, j = 1,\dots,c\}$ is identical.

Finally, assume that we have a unital C*-algebra $\cl A$, a tracial state
$s$, and projections $e_{v,i}$ as above. It is clear that
$s(e_{v,i}e_{v,j}) = 0$ whenever $v\in V$ and $i\neq j$; thus, 
$(p(i,j|v,w))_{v,i,w,j}$ is synchronous. 
Without loss of generality, we can assume that $\cl A$ is generated by the set 
$\{e_{v,i} : v\in V, i = 1,\dots,c\}$.
The GNS construction associated with $(\cl A, s)$, produces
a Hilbert space $\cl H$, a unital *-homomorphism $\pi: \cl A \to B(\cl H)$ and a unit vector $\eta\in \cl H$ 
such that $s(X) = \langle \pi(X) \eta,\eta \rangle$, $X\in \cl A$.  
Set $E_{v,i} = \pi(e_{v,i})$, $v\in V$, $i = 1,\dots,c$. Since these operators are the images of projections that sum to 1, they form a PVM. 
By construction, $\cl H$ is the cyclic subspace corresponding to $\eta$. 
Thus, every vector in $\cl H$ can be approximated by a sum of the
form
\[ \sum_{r=1}^k W_r \eta, \]
where $W_1,\dots,W_r$ are words on $\{E_{v,i} : v\in V, i = 1,\dots,c\}$.

Fix $v\in V$ and $j\in \{1,\dots,c\}$. 
Using the facts that $s$ is a tracial state and that $E_{v,j}^*= E_{v,j}$, we have that
\begin{align*}
\left\|\sum_{r=1}^k W_r E_{v,j} \eta \right\|^2  
&\le \left\| \sum_{r=1}^k W_rE_{v,j} \eta \right\|^2 +
\left\|\sum_{r=1}^k W_r(I -E_{v,j}) \eta\right\|^2\\
&\!\!\!\!\!\!\!\!\!\!\!\!\!\!\!\!\!\!\!\!
 = \sum_{r,l=1}^k \langle E_{v,j}^* W_l ^* W_r E_{v,j} \eta, \eta \rangle + 
\sum_{r,l=1}^k \langle
(I-E_{v,j})^* W_l^* W_r (I- E_{v,j}) \eta, \eta \rangle\\
&\!\!\!\!\!\!\!\!\!\!\!\!\!\!\!\!\!\!\!\!
 = 
\sum_{r,l=1}^k \langle E_{v,j} W_l ^* W_r \eta, \eta \rangle + 
\sum_{r,l=1}^k \langle
(I-E_{v,j}) W_l^* W_r \eta, \eta \rangle\\
&\!\!\!\!\!\!\!\!\!\!\!\!\!\!\!\!\!\!\!\!
 = 
\sum_{r,l=1}^k \langle  W_l ^* W_r \eta, \eta \rangle = \left\| \sum_{r=1}^k W_r \eta\right\|^2.
\end{align*}
Thus, the operator $F_{v,j}$ on $\cl H$ given by 
\[ 
  F_{v,j}\left(\sum_{r=1}^k W_r \eta\right) = \sum_{r=1}^k W_r E_{v,j} \eta
\]
is a well-defined contraction. 

Using that the $E_{v,j}$'s form a PVM, it follows that $F_{v,j}^2=F_{v,j}= F_{v,j}^*$ and $\sum_{j=1}^c F_{v,j} = I$, 
{\it i.e.}, the $F_{v,j}$'s also form a PVM.
Clearly, $F_{v,j} \eta = E_{v,j} \eta$.  Also, 
$$F_{v,j}E_{w,i} (W \eta) = E_{w,i} (WE_{v,j} \eta) = E_{w,i}(F_{v,j} W \eta)$$ 
whenever $W$ is a word on $\{E_{v,i} : v\in V, i = 1,\dots,c\}$,
which shows that $F_{v,j}E_{w,i} = E_{w,i} F_{v,j}$. 

The fact that $E_{v,i}F_{w,j} = F_{w,j}E_{v,i}$ 
easily implies the relations (\ref{eq_sexp}). 
\end{proof}

\begin{cor}\label{synchronouscor} 
A correlation $(p(i,j|v,w))_{v,i,w,j}$ belongs to $C^s_{\cc}(n,c)$ (resp. $C_{\rm q}^s(n,c)$, resp. $C^s_{\rm loc}(n,c)$) 
if and only if there exists a C*-algebra(resp. finite dimensional C*-algebra $\cl A$, resp. abelian C*-algebra) $\cl A$,
a tracial state $s: \cl A \to \bb C$ 
and a generating family $\{e_{v,i} : v\in V, i = 1,\dots,c\}$ of projections 
satisfying $\sum_{i=1}^c e_{v,i} = 1$, $v\in V$, such that
\[ p(i,j|v,w) = s(e_{v,i} e_{w,j}), \ \ \ v, w \in V, i,j = 1,\dots,c.\]
\end{cor}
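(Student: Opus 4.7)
The plan is to reduce the corollary to Theorem \ref{trthm} together with two simple refinements that control the \emph{size} and \emph{commutativity structure} of the witnessing C*-algebra in the quantum and local cases. The central observation is that Theorem \ref{trthm}(iii) already exhibits a tracial state, namely the vector state $X\mapsto\langle X\eta,\eta\rangle$, on the C*-algebra generated by the Alice projections $\{E_{v,i}\}$, while the converse half of Theorem \ref{trthm} shows how to start from any tracial state and build a realisation. Thus for $C^s_{\cc}(n,c)$ the forward direction takes a realisation $\big((E_{v,i}),(F_{w,j}),\cl H,\eta\big)$, sets $\cl A = C^*(E_{v,i} : v\in V,\ 1\le i\le c)$, $e_{v,i}=E_{v,i}$, and $s(\,\cdot\,)=\langle\,\cdot\,\eta,\eta\rangle$; the identity $p(i,j|v,w)=s(e_{v,i}e_{w,j})$ is Theorem \ref{trthm}(ii), and the reverse direction is the converse half of Theorem \ref{trthm}.

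For $C_{\rm q}^s(n,c)$ I would invoke Theorem \ref{findimrealisation} in addition. In the forward direction, a synchronous quantum correlation admits a finite-dimensional realisation, so the generated $\cl A\subseteq \cl B(\cl H)$ is itself finite-dimensional. In the reverse direction, given a tracial state on a finite-dimensional $\cl A$, the GNS Hilbert space is a quotient of $\cl A$ and hence finite-dimensional; the converse half of Theorem \ref{trthm} then produces mutually commuting PVMs $(E_{v,i}),(F_{w,j})$ on this finite-dimensional space with $s(e_{v,i}e_{w,j})=\langle E_{v,i}F_{w,j}\eta,\eta\rangle$, whereupon Theorem \ref{findimrealisation} certifies that the resulting correlation lies in $C_{\rm q}(n,c)$.

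For $C^s_{\rm loc}(n,c)$ I would use the Remark following Theorem \ref{findimrealisation}, which characterises local correlations as those admitting a realisation by mutually commuting operators. In the forward direction this means that all $\{E_{v,i},F_{w,j}\}$ commute, so $\cl A$ generated by $\{E_{v,i}\}$ is abelian and the vector state on it is automatically tracial. In the reverse direction the GNS representation of an abelian $\cl A$ consists of mutually commuting operators; the operators $F_{w,j}$ constructed in the converse of Theorem \ref{trthm} are then seen to equal $E_{w,j}$ on every vector of the form $W\eta$ (because $WE_{w,j}\eta=E_{w,j}W\eta$ for any word $W$ in commuting $E$'s), so Alice- and Bob-PVMs mutually commute and the Remark returns $(p(i,j|v,w))$ to $C^s_{\rm loc}(n,c)$.

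The main obstacle is a bookkeeping one: one must verify precisely that finite-dimensionality of $\cl A$ transfers to the GNS space and then back to a finite-dimensional realisation of the form required by Theorem \ref{findimrealisation}, and dually that in the abelian case the $F_{w,j}$ produced by the GNS construction in Theorem \ref{trthm} genuinely commute with the $E_{v,i}$ (rather than merely with each other). Both points are short but must be stated explicitly because the proof of Theorem \ref{trthm} only asserts $E_{v,i}F_{w,j}=F_{w,j}E_{v,i}$ in general, and only in the abelian setting does one get the stronger conclusion $F_{w,j}=E_{w,j}$ on the cyclic vector that is needed to apply the local-realisation Remark.
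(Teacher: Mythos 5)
Your proposal is correct and follows essentially the same route as the paper's own proof: all three cases reduce to Theorem \ref{trthm}, with Theorem \ref{findimrealisation} supplying the finite-dimensional realisation in the quantum case and the Remark after it handling the local case by commutativity. The only cosmetic difference is in the abelian reverse direction, where you re-derive $F_{w,j}=E_{w,j}$ from the GNS construction of Theorem \ref{trthm}, whereas the paper simply sets $F_{v,i}=E_{v,i}$ directly (a valid realisation since the $E$'s already mutually commute) — the two observations are equivalent.
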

\begin{proof} 
The statement concerning $C^s_{\cc}(n,c)$ is immediate from Theorem \ref{trthm}. 
For the second equivalence, notice that if 
a synchronous correlation belongs to $C^s_{\q}(n,c)$ 
then it admits a realisation $\{(E_{v,i})_{i=1}^c, (F_{w,j})_{j=1}^c,\cl H,\eta\}$
for which $\cl H$ is finite dimensional. Thus, 
the C*-algebra generated by $\{E_{v,i} : v\in V, i = 1,\dots,c\}$ is finite dimensional.  
Conversely, if $\cl A$ is a finite dimensional C*-algebra and $s: \cl A \to \bb C$ is any state, then the GNS 
construction yields a finite dimensional Hilbert space. 
Thus, the operators $E_{v,i}$ and $F_{w,j}$ from Theorem \ref{trthm} 
act on a finite dimensional Hilbert space, and the claim now follows from Theorem~\ref{findimrealisation}.

Finally, if a synchronous correlation belongs to $C_{\rm loc}(n,c),$ then it has a realisation such that the C*-algebra generated by $\{ E_{v,i} : v \in V, i=1\ldots c \}$ is abelian. Conversely, if $\cl A$ is abelian and $s$ is any state, then the GNS construction yields an abelian family of projections $\{ E_{v,i} \}$ and one can set $F_{v,i} = E_{v,i}.$ 
\end{proof}

Let $\{(E_{v,i})_{i=1}^c$, $(F_{w,j})_{j=1}^c,\cl H,\eta\}$ be a realisation of a 
synchronous correlation $(p(i,j|v,w))_{v,i,w,j}$. 
Let $\cl H_0$ be the smallest closed subspace of $\cl H$ containing $\eta$ and 
invariant under the operators $F_{w,j}$, $w\in V$, $j = 1,\dots,c$. 
Since $F_{w,j}$ is selfadjoint, it is reduced by $\cl H_0$.
Thus, $F_{w,j}$ has a diagonal matrix form with respect to the decomposition
$\cl H = \cl H_0 \oplus \cl H_0^{\perp}$. 
Moreover, since $F_{w,j}$ is a projection, the operator $F^0_{w,j} = F_{w,j}|_{\cl H_0}$ 
is a projection and 
$\sum_{j=1}^c F^0_{w,j} = I_{\cl H_0}$, {\it i.e.}, 
$(F^0_{w,j})_{j=1}^c$ is a PVM on $\cl H_0$ for each $w\in V$. 

By equation (\ref{word}), $\cl H_0$ 
reduces the operators $E_{v,i}$. Hence, setting $E^0_{v,i} = E_{v,i}|_{\cl H_0}$, we have 
that $(E^0_{v,i})_{i=1}^c$ is a PVM on $\cl H_0$;  moreover,
\[ E^0_{v,i} F^0_{w,j}= F^0_{w,j}E^0_{v,i}, \ \ \ v,w\in V, i,j = 1,\dots,c.\]
Thus, all the properties of (\ref{commprop}) are satisfied for 
the new family of operators, but in addition $\eta$ is cyclic for the C*-algebra generated 
by $\{F_{w,j} : w\in V, j = 1,\dots,c\}$.

\begin{defn}\label{d_crea} 
Given a synchronous correlation $(p(i,j|v,w)) \in
  C^s_{\qc}(n,c)$, we call a realisation  $\{ (E_{v,i})_{i=1}^c,
  (F_{w,j})_{j=1}^c , \cl H, \eta \}$  \emph{minimal} if
  $\eta$ is a cyclic vector for the C*-algebra generated by the family $\{F_{w,j} : w\in V, j = 1,\dots,c\}$. 
  
  Given a graph $G = (V,E)$,
  a collection $\{ (E_{v,i})_{i=1}^c,(F_{w,j})_{j=1}^c,\cl H, \eta \}$ will be called a \emph{$c$-realisation of $G$}
  provided that it is a realisation of a synchronous correlation
  $(p(i,j|v,w))_{v,i,w,j}$ that belongs to the kernel of the functional
  $L_{G,c}$.  We will refer to the collection as a \emph{minimal}
  $c$-realisation of $G$ provided it is also a minimal realisation.
\end{defn}

The discussion preceding Definition \ref{d_crea} 
shows how to obtain a minimal $c$-realisation from any $c$-realisation.

\begin{prop}\label{zeroproducts}  
Let $(p(i,j|v,w))_{v,i,w,j} \in C^s_{\cc}(n,c)$
be a synchronous correlation with a minimal realisation 
$((E_{v,i})_{i=1}^c,(F_{w,j})_{j=1}^c ,\cl H,\eta)$. Then the following are equivalent:
\begin{itemize}
\item[(i)] $\langle E_{v,i}F_{w,j} \eta, \eta \rangle =0$,
\item[(ii)] $E_{v,i}E_{w,j}= 0$,
\item[(iii)] $F_{v,i}F_{w,j} =0$. 
\end{itemize}
If this family is a minimal $c$-realisation of a graph $G$ on $n$
vertices, then for every edge $(v,w)$ of $G$ we have that
\[E_{v,i}E_{w,i} = F_{v,i}F_{w,i} =0, \ \ \ i = 1,\dots,c.\]
\end{prop}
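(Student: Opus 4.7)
The plan is to exploit two facts from Theorem~\ref{trthm}: the functional $s(X):=\langle X\eta,\eta\rangle$ is tracial on each of the C*-algebras $\cl C_E$ and $\cl C_F$ generated by $\{E_{v,i}\}$ and $\{F_{w,j}\}$ respectively, and the word-reversal identity~(\ref{word}) shows that the $\cl C_E$-cyclic and $\cl C_F$-cyclic subspaces of $\eta$ coincide. Consequently the minimality hypothesis --- cyclicity of $\eta$ for $\cl C_F$ --- automatically yields cyclicity for $\cl C_E$ as well.

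The key intermediate step is to upgrade ``cyclic'' to ``separating'' for both algebras. For this I would show that, for a tracial state $s$ on any unital C*-algebra, the left kernel $\{a:s(a^*a)=0\}$ is a two-sided ideal: the trace identity $s(b^*a^*ab)=s(a^*a\cdot bb^*)$, combined with the Cauchy-Schwarz-type estimate $|s(XY)|^2\le s(X)\,s(YXY)$ for $X,Y\ge 0$ (obtained from GNS Cauchy-Schwarz applied with $X^{1/2}$ and $X^{1/2}Y$), gives $s(a^*a)=0\Rightarrow s((ab)^*(ab))=0$ for all $b$. Cyclicity of $\eta$ then forces any $a$ in the null ideal to annihilate a dense subspace of $\cl H$, hence to vanish as an operator.

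With this in hand the equivalences are short. For (i)$\Rightarrow$(ii), Theorem~\ref{trthm}(ii) rewrites the hypothesis as $s(E_{v,i}E_{w,j})=0$, and the trace identity together with $E_{v,i}^2=E_{v,i}$, $E_{w,j}^2=E_{w,j}$ gives
\[
\|E_{v,i}E_{w,j}\eta\|^2 \;=\; s(E_{w,j}E_{v,i}E_{w,j}) \;=\; s(E_{v,i}E_{w,j}^2) \;=\; s(E_{v,i}E_{w,j}) \;=\; 0,
\]
so $E_{v,i}E_{w,j}\eta=0$; the separating property then promotes this to the operator identity $E_{v,i}E_{w,j}=0$. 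The implication (i)$\Rightarrow$(iii) is symmetric inside $\cl C_F$, using $\langle E_{v,i}F_{w,j}\eta,\eta\rangle=\langle F_{w,j}F_{v,i}\eta,\eta\rangle$ from Theorem~\ref{trthm}(ii). The reverse implications (ii)$\Rightarrow$(i) and (iii)$\Rightarrow$(i) are immediate from the same identity. Finally, if the realisation is a $c$-realisation of $G$, then $L_{G,c}$ vanishes on the associated correlation; non-negativity of probabilities forces every summand to vanish, so $\langle E_{v,i}F_{w,i}\eta,\eta\rangle=0$ for each edge $(v,w)\in E$ and each colour $i$, and the equivalence delivers $E_{v,i}E_{w,i}=F_{v,i}F_{w,i}=0$.

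The main obstacle is the separating property; everything else is bookkeeping with the tracial identity and the projection relations. A legitimate shortcut is to simply invoke the general principle that the cyclic vector of the GNS representation of a tracial state is automatically separating, which replaces the second paragraph entirely.
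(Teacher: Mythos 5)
Your proof is correct, but it takes a different route from the paper's. Where you invoke the abstract principle that the GNS cyclic vector of a tracial state is separating (establishing it via the two-sided ideal property of the null left-kernel, or appealing to word-reversal to transfer cyclicity from $\cl C_F$ to $\cl C_E$), the paper's argument is more elementary and bypasses separating entirely: having shown $E_{v,i}E_{w,j}\eta = 0$ (by a similar norm computation, though phrased via~(\ref{eq_sameon}) and the fact that $E_{v,i}F_{w,j}$ is a projection rather than via traciality), it simply observes that $E_{v,i}E_{w,j}$ commutes with every $F_{w_1,j_1}\cdots F_{w_k,j_k}$, so it annihilates the dense subspace $\cl C_F\eta$ -- which is exactly what minimality hands you. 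Your approach buys generality (it would work whenever $s$ is tracial and $\eta$ is cyclic for the algebra in which you want the operator identity), but at the cost of the extra Cauchy--Schwarz/ideal machinery; the paper's buys brevity by exploiting the commuting bipartite structure, which makes cyclicity-for-$\cl C_F$ directly applicable to elements of $\cl C_E$ without any detour through the separating property. Both your norm computation and your treatment of the graph-edge consequences are sound.
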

\begin{proof} 
We prove the equivalence of (i) and (ii). 
The equivalence of (i) and (iii) is identical. If (ii) holds then, by (\ref{eq_sameon}), 
$\langle E_{v,i} F_{w,j} \eta, \eta \rangle = \langle E_{v,i} E_{w,j} \eta, \eta \rangle =0$. 

Conversely, if (i) holds, then
\begin{align*}
\|E_{v,i}E_{w,j} \eta\|^2 & = \langle E_{v,i} E_{w,j} \eta, E_{v,i}E_{w,j} \eta \rangle =\langle E_{v,i}F_{w,j} \eta, E_{v,i}F_{w,j} \eta \rangle\\
& = \langle E_{v,i}F_{w,j} \eta, \eta \rangle =0,
\end{align*}
since the operators $E_{v,i}$ and $F_{w,j}$ are commuting projections.

Next, for any vector $\xi$ of the form $\xi = F_{w_1,j_1} \cdots F_{w_k,j_k} \eta$, we have that
\[ E_{v,i}E_{w,j}\xi = F_{w_1,j_1} \cdots F_{w_k,j_k} E_{v,i} E_{w,j} \eta =0.\]
Part (ii) now follows by minimality.

Finally, the statement involving graphs follows from the 
equivalence of (i) and (ii) and the fact that if $(v,w)$ is an
edge, then $\langle E_{v,i}F_{w,i} \eta, \eta \rangle = 0$.
\end{proof}

Note that when $p(i,j|v,w) = \langle E_{v,i}F_{w,j} \eta, \eta \rangle,$ then
\[ \sum_j p(i,j|v,w) = \langle E_{v,i} \eta, \eta \rangle := p_A(i|v) \]
is independent of $w$ and represents the marginal probability that Alice produces outcome
$i$ given input $v.$  Similarly, $\sum_i p(i,j|v,w) = \langle F_{w,j} \eta, \eta \rangle := p_B(j|w)$ represents the marginal probability of Bob producing outcome $j$ given input $w$.

\begin{prop}\label{constantmarginals} 
Let $G = (V,E)$ be a graph on $n$ vertices that admits a $c$-realisation.
Then there exists a minimal $c$-realisation $((E_{v,i})_{i=1}^c,(F_{w,j})_{j=1}^c ,\cl H$, $\eta)$ 
of $G$ such that the marginal probabilities satisfy 
\begin{equation}\label{eq_1overc}
\langle E_{v,i} \eta, \eta \rangle = \langle F_{w,j} \eta, \eta \rangle = \frac{1}{c}
\end{equation}
for every $v,w\in V$ and every $i,j = 1,\dots,c$.

Moreover, if $G$ admits a $c$-realisation for which the 
corresponding synchronous correlation is in $C_{\rm x}(n,c),$ for 
${\rm x} \in \{{\rm loc}, {\rm q}, {\rm qa}\}$,
 then a minimal $c$-realisation
$((E_{v,i})_{i=1}^c,(F_{w,j})_{j=1}^c ,\cl H,\eta)$ with marginal probabilities equal to $\frac{1}{c}$ can be chosen so that the corresponding synchronous correlation is in $C_{\rm x}(n,c).$
\end{prop}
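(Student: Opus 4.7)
The plan is to symmetrize a given $c$-realisation over the natural action of the symmetric group $S_c$ on the colour labels, and afterwards cut down to a cyclic subspace to obtain minimality. Starting from any $c$-realisation $\big((E_{v,i}),(F_{w,j}),\cl H,\eta\big)$ of $G$, I form $\hat{\cl H}=\bigoplus_{\sigma\in S_c}\cl H$, together with $\hat{E}_{v,i}=\bigoplus_\sigma E_{v,\sigma(i)}$, $\hat{F}_{w,j}=\bigoplus_\sigma F_{w,\sigma(j)}$, and $\hat\eta=\tfrac{1}{\sqrt{c!}}\bigoplus_\sigma \eta$. It is routine to verify that $(\hat E_{v,i})_{i=1}^c$ and $(\hat F_{w,j})_{j=1}^c$ are commuting PVMs on $\hat{\cl H}$, that $\hat\eta$ is a unit vector, and that the resulting correlation is the colour-symmetrised average
\[
\hat p(i,j|v,w)=\frac{1}{c!}\sum_{\sigma\in S_c}p(\sigma(i),\sigma(j)|v,w).
\]

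Next I would verify that $\hat p$ is still a $c$-realisation of $G$ with uniform marginals. Synchronicity survives because $\sigma(i)\ne\sigma(j)$ whenever $i\ne j$, and $L_{G,c}(\hat p)=0$ because each summand is annihilated by $L_{G,c}$. The symmetrisation flattens the marginals: for every $v$ and $i$,
\[
\langle \hat E_{v,i}\hat\eta,\hat\eta\rangle=\frac{1}{c!}\sum_{\sigma}\langle E_{v,\sigma(i)}\eta,\eta\rangle=\frac{(c-1)!}{c!}\sum_{k=1}^{c}\langle E_{v,k}\eta,\eta\rangle=\frac{1}{c},
\]
using $\sum_k E_{v,k}=I$ and $\|\eta\|=1$; the computation for $\hat F_{w,j}$ is identical. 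For minimality, I then restrict to the cyclic subspace $\hat{\cl H}_0\subseteq\hat{\cl H}$ generated by $\hat\eta$ under the C*-algebra generated by $\{\hat F_{w,j}\}$, as in the paragraph preceding Definition~\ref{d_crea}. The word-reversal identity (\ref{word}), applied to the synchronous correlation $\hat p$, forces $\hat{\cl H}_0$ to also reduce every $\hat E_{v,i}$, so the restrictions yield a minimal realisation; and since passage to a reducing subspace containing $\hat\eta$ does not alter matrix coefficients at $\hat\eta$, the uniform marginals $1/c$ persist.

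For the second assertion I work first at the level of correlations. For each $\sigma\in S_c$ the permuted correlation $p_\sigma(i,j|v,w):=p(\sigma(i),\sigma(j)|v,w)$ lies in the same class $C_{\rm x}(n,c)$ as $p$: for $C_{\rm loc}$ this follows immediately from the convex-combination formula (\ref{eq_localco}); for $C_{\rm q}$ one uses the same tensor-product realisation with POVMs relabelled by $\sigma$; for $C_{\rm qa}$ one passes to limits of the $C_{\rm q}$ case. Each $C_{\rm x}(n,c)$ is convex by the standard orthogonal-direct-sum construction, and $C_{\rm qa}(n,c)$ is additionally closed, so the average $\hat p=\tfrac{1}{c!}\sum_\sigma p_\sigma$ again lies in $C_{\rm x}(n,c)$. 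Starting the construction above from any realisation of $\hat p$ (which exists by Corollary~\ref{synchronouscor}) then produces a minimal realisation with marginals $1/c$ whose correlation is $\hat p\in C_{\rm x}(n,c)$, as required.

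The main obstacle I anticipate is reconciling two levels of description: class-preservation is most transparent at the level of correlations, whereas uniformity of marginals and minimality are most natural at the level of realisations. I plan to handle this by running the correlation-level argument once and then letting the realisation-level construction of the first half certify the outcome. A secondary subtlety is the verification that $\hat{\cl H}_0$ reduces every $\hat E_{v,i}$, where synchronicity of $\hat p$ enters essentially via (\ref{word}); had we merely averaged operators without simultaneously averaging $\eta$, this reduction step would fail.
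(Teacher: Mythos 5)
Your proof is correct and follows the same core idea as the paper: flatten the marginals by symmetrising the colour labels over a transitive group action, verify the averaged correlation is still a synchronous $c$-realisation with uniform marginals, and then pass to a cyclic subspace for minimality (the word-reversal identity guaranteeing that the cyclic subspace for the $F$'s also reduces the $E$'s). The paper averages over the cyclic group $\mathbb{Z}_c$ acting by shifts on the colour set, taking a direct sum of $c$ copies of $\cl H$ with $\tilde E_{v,i} = \bigoplus_{l} E_{v,l+i}$, whereas you average over the full symmetric group $S_c$ and take $c!$ copies; any transitive action on $\{1,\dots,c\}$ would do, so the two choices are interchangeable, yours merely using a larger ancilla. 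For the ``moreover'' clause the paper stays at the level of realisations (observing that for $\mathrm{x}=\mathrm{q}$ the enlarged Hilbert space remains finite-dimensional, for $\mathrm{x}=\mathrm{loc}$ the new operators still commute, and for $\mathrm{x}=\mathrm{qa}$ one averages a sequence of $C_{\rm q}$ correlations converging to $p$), while you work at the level of correlations, invoking permutation-invariance and convexity of each class $C_{\rm x}(n,c)$ to place $\hat p$ directly in $C_{\rm x}(n,c)$, and then taking any realisation of $\hat p$ and minimising. Both routes are valid; the correlation-level argument is slightly cleaner in unifying the three cases, at the cost of an extra appeal to convexity of $C_{\rm x}(n,c)$ (which is standard, but worth stating explicitly for $C_{\rm qa}$).
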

\begin{proof} 
Let $\{(E_{v,i})_{i=1}^c, (F_{w,j})_{j=1}^c, \cl H, \eta \}$ be a $c$-realisation of $G$.
Let $\tilde{\cl H}$ be the direct sum of $c$ copies of $\cl H$ and set
\[\tilde{E}_{v,i} = E_{v,1+i} \oplus \cdots \oplus E_{v,c+i} \text{ and }
\tilde{F}_{w,j} = F_{w,1+j} \oplus \cdots \oplus F_{w, c+j}, \]
where the addition in the set of indices is performed modulo $c$.
Set $\tilde{\eta} = \frac{1}{\sqrt{c}} ( \eta \oplus \cdots \oplus \eta)$. 

It is easy to check that $\{ (\tilde{E}_{v,i})_{i=1}^c, (\tilde{F}_{w,j})_{j=1}^c, \tilde{\cl H}, \tilde{\eta}\}$
is a $c$-realisation of $G$. Moreover, for all $v\in V$ and all $i = 1,\dots,c$, we have 
$$\langle \tilde{E}_{v,i} \tilde{\eta}, \tilde{\eta} \rangle = 
\frac{1}{c}\sum_{k=1}^c \langle E_{v,k}\eta,\eta \rangle = \frac{1}{c};$$
similarly, 
\[ \langle \tilde{F}_{w,j} \tilde{\eta}, \tilde{\eta} \rangle = \frac{1}{c}\]
for all $w\in V$, $j = 1,\dots,c$.
The proof is complete after passing  to a minimal $c$-realisation, as described before Definition \ref{d_crea}.

Suppose that the original synchronous correlation belongs to $C_{\rm q}(n,c)$, 
then it has a $c$-realisation of $G$ whose Hilbert space is finite dimensional. 
Then the procedure described in the previous two paragraphs yields a finite dimensional 
Hilbert space, which shows that the graph $G$ admits a $c$-realisation 
that satisfies (\ref{eq_1overc}) and whose synchronous correlation belongs to $C_{\rm q}(n,c)$. 

If the original correlation belongs to $C_{\rm loc}(n,c)$, then all PVM's realising the given correlations can be chosen to commute with each other and the described procedure yields a commuting family of operators, and hence the claim follows. 

Finally, suppose that the original synchronous correlation of a $c$-realisation of $G$  belongs to 
$C_{\rm qa}(n,c)$. Let $(p_k)_{k\in \bb{N}}$ be a sequence of correlations that belong to $C_{\rm q}(n,c)$
such that $\lim_k p_k(i,j|v,w)= p(i,j|v,w)$ for all $i,j,v,w$. 
By the above construction, the correlations, defined by 
$$\tilde{p}_k(i,j|v,w)= \frac{1}{c}\sum_{l=1}^c p_k(i+l,j+l|v,w),$$ 
belong to $C_{\rm q}(n,c)$ and have constant marginals.  Moreover,  
$\lim_k \tilde{p}_k(i,j|v,w) = \frac{1}{c}\sum_{l=1}^c p(i,j|v,w) := \tilde{p}(i,j|v,w).$  
Thus, $\tilde{p} \in C_{\rm qa}(n,c)$ has constant marginals.  Finally, the fact that $L_{G,c}(p) =0$ implies that $L_{G,c}(\tilde{p}) =0.$ 
\end{proof}

D.\ Roberson and L.\ Man\v{c}inska~\cite{arxiv:1212.1724} define the 
\emph{projective rank}  $\xi_{\f}(G)$ of a graph $G$ to be the infimum of the numbers
$\frac{d}{r}$ such that there exists a Hilbert space of (finite) dimension $d$ and projections $E_v$, $v\in V$, 
all of rank $r$, such that $E_vE_w=0$ whenever $(v,w)$ is an edge of $G$; such a
collection is called a \textit{$d/r$-projective representation} of $G$.
Recall that the functional $\tr(X) = \frac{1}{d} {\rm Tr}(X)$, where ${\rm Tr}$ 
is the usual trace on $M_d$,
is the unique tracial state on $M_d$, and that if $E_v$ is a projection of rank $r$ then  
$\tr(E_v) = \frac{r}{d}$. 
Thus, $\xi_{\f}(G)^{-1}$ is the supremum of quantities of the form $s(E_v)$, 
over a set of tracial states $s$ of matrix algebras.
This viewpoint motivates the following definition.

\begin{defn}\label{def:xi_tr}
Let $G = (V,E)$ be a graph. We define the \emph{tracial rank} $\xi_{\tr}(G)$ of $G$ 
to be the reciprocal of the supremum of the set of real numbers 
$u$ for which there exists a unital C*-algebra $\cl A$, a tracial state $s$ on $\cl A$
and projections $e_v\in \cl A$, $v \in V$, such that $e_v e_w=0$ whenever $(v,w) \in E$ 
and $s(e_v) = u$ for every $v \in V$. 
\end{defn}

\begin{prop}\label{fdtr=proj} 
Let $G$ be a graph. 
Then $\xi_{\f}(G)$ is equal to 
the reciprocal of the supremum of the set of real numbers 
$u$ for which there exists a finite dimensional C*-algebra $\cl A$, a tracial state $s$ on $\cl A$
and projections $e_v\in \cl A$, $v \in V$, such that $e_v e_w=0$ whenever $(v,w) \in E$ 
and $s(e_v) = u$ for every $v \in V$.
\end{prop}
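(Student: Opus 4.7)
The plan is to prove the two inequalities between $\xi_{\f}(G)$ and $\beta(G)$, where $\beta(G)$ denotes the right-hand side of the claimed identity. One direction is immediate: any $d/r$-projective representation $(E_v)_{v\in V}\subseteq M_d$ of $G$ furnishes a witness for $\beta(G)^{-1}$ with $\cl A = M_d$, $s = \tr$ the (unique) normalised trace, and $u = r/d$, so $\xi_{\f}(G)^{-1}\le \beta(G)^{-1}$, i.e.\ $\beta(G)\le \xi_{\f}(G)$.

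For the reverse inequality, I would start from a witness $(\cl A, s, \{e_v\}_{v\in V})$ with $\cl A$ finite-dimensional and $s(e_v) = u$ for all $v$, and apply Artin--Wedderburn to decompose $\cl A = \bigoplus_{k=1}^m M_{n_k}$, $e_v = \bigoplus_k e_{v,k}$, and $s = \sum_k \lambda_k \tr_{n_k}$ with $\lambda_k\ge 0$ and $\sum_k \lambda_k = 1$. Writing $r_{v,k} = \rank(e_{v,k})$, assume first that all $\lambda_k$ are rational; then one can choose positive integers $m_k$ and $N = \sum_k m_k n_k$ with $\lambda_k = m_k n_k/N$. Amplifying the $k$-th summand with multiplicity $m_k$ embeds $\cl A$ unitally into $M_N$ in such a way that the normalised trace of $M_N$ restricts to $s$ on $\cl A$. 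The amplified projections $E_v\in M_N$ satisfy $E_vE_w = 0$ for $(v,w)\in E$ block-wise, and have the common rank $\sum_k m_k r_{v,k} = N\,s(e_v) = Nu$. Hence $(E_v)_{v\in V}$ is a genuine $N/(Nu)$-projective representation of $G$, giving $\xi_{\f}(G)\le 1/u$.

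The main obstacle is the case when some $\lambda_k$ are irrational, which I would handle by a perturbation combined with a rank-equalisation trick. Approximate each $\lambda_k$ by rationals $\lambda_k^{(\epsilon)}$ with $\sum_k\lambda_k^{(\epsilon)}=1$ and $\lambda_k^{(\epsilon)}\to\lambda_k$, and apply the previous construction to $s^{(\epsilon)} = \sum_k\lambda_k^{(\epsilon)}\tr_{n_k}$, producing projections $E_v\in M_{N_\epsilon}$ with $E_vE_w=0$ along the edges, but now with ranks $R_v^{(\epsilon)} = N_\epsilon\, s^{(\epsilon)}(e_v)$ that may depend on $v$ (since the perturbed tracial state no longer assigns the common value $u$). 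The key observation is that the relation $E_vE_w=0$ between self-adjoint projections forces their ranges to be mutually orthogonal, so arbitrary sub-projections inherit the annihilation relation. Replacing each $E_v$ by a sub-projection $\tilde E_v\le E_v$ of the common rank $r_\epsilon = \min_v R_v^{(\epsilon)}$ therefore yields an honest $N_\epsilon/r_\epsilon$-projective representation of $G$, so $\xi_{\f}(G)\le 1/\min_v s^{(\epsilon)}(e_v)$. Since $s^{(\epsilon)}(e_v)\to s(e_v)=u$ for every $v$ as $\epsilon\to 0$, the limit gives $\xi_{\f}(G)\le 1/u$, and taking supremum over witnesses for $\beta(G)^{-1}$ completes the proof.
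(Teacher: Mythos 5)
Your proposal is correct, and both directions are handled rigorously; but the way you deal with irrational weights in the tracial state is genuinely different from the paper's argument. After decomposing $\cl A=\bigoplus_l M_{d_l}$ and writing the trace weights as $q_l$, the paper considers the linear program of maximising $t=\sum_l q_l\,\mathrm{rank}(e_v^l)$ (constant in $v$) subject to $\sum_l q_l d_l=1$, $q_l\ge 0$, and invokes the fact that an LP with integer constraint coefficients attains its optimum at a rational vertex; this directly produces a rational $u'\ge u$ with a matching projective representation. You instead perturb the (possibly irrational) weights $\lambda_k$ to nearby rationals $\lambda_k^{(\epsilon)}$, accept that the resulting projections no longer have equal rank, and restore equal rank by passing to subprojections of the minimal rank — which is legitimate precisely because $E_vE_w=0$ is a range-orthogonality condition and so survives passage to subprojections — then let $\epsilon\to 0$. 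The paper's LP route is non-perturbative and also records the extra fact that the supremum defining $\beta(G)^{-1}$ is actually approached through rationals $u'$ (a fact the paper reuses later when discussing irrationality of $\xi_{\f}$); your perturbation-and-truncation route is more analytic and arguably more elementary, needing no LP rationality lemma, at the cost of not exhibiting a single rational witness dominating $u$.
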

\begin{proof} 
Let $\cl U$ be the set of all positive real numbers $u$
for which there exists a finite dimensional C*-algebra $\cl A$, 
a tracial state $s$ on $\cl A$
and projections $e_v\in \cl A$, $v \in V$, such that $e_v e_w=0$ whenever $(v,w) \in E$ 
and $s(e_v) = u$ for every $v \in V$.
Set $U= \sup \cl U$.  
By the paragraph preceding Definition \ref{def:xi_tr}, 
we see that each $r/d$ appearing in the definition of $\xi_{\f}(G)$ is in $\cl U$, and hence $\xi_{\f}(G)^{-1} \le U$.

Let $u \in \cl U$ and $\cl A$ be a finite dimensional C*-algebra as in the previous paragraph. 
Then $\cl A$ is *-isomorphic to a direct sum of matrix algebras, say, 
$\cl A\cong \sum_{l=1}^L \oplus M_{d_l}$, and every tracial state on $\cl A$ has the form
\[s\left(\oplus_{l=1}^L X_l\right) = \sum_{l=1}^L p_l \tr(X_l),\]
for some $p_l \ge 0$ with $\sum_{l=1}^L p_l = 1$. Set $q_l = \frac{p_l}{d_l}$, $l = 1,\dots,L$. 

Each projection $e_v$ is of the form $e_v= \oplus_{l=1}^L e_v^l$, where $e_v^l$ is a 
projection in $M_{d_l}$, and
\[u = \sum_{l=1}^L p_l \frac{{\rm rank}(e_v^l)}{d_l} = \sum_{l=1}^L q_l \, {\rm rank}(e_v^l).\]
Moreover, $\sum_{l=1}^L q_l d_l =1$.

Let 
\[u^{\prime} = \max \{t : \mbox{ there exist } q_l \ge 0, l = 1,\dots,L, \mbox{ such that } \sum_{l=1}^L q_ld_l =1$$
$$\mbox{ and } \sum_{l=1}^L q_l {\rm rank}(e_v^l) = t \mbox{ for all } v\in V\}.\] 
By the previous paragraph, $u \le u^{\prime}$.

Since the coefficients of the constraint equations are all integers, the  maximum $u^{\prime}$ will be attained 
at an $L$-tuple $(q_1,\dots,q_L)$ whose entries are rational. Writing
$q_l = m_l/d$ for some integers $d$ and $m_l$, $l = 1,\dots,L$, and setting
\[e^{\prime}_v = \oplus_{l=1}^L e_v^l \otimes I_{m_l}\]
we obtain a set of projection matrices of size
\[\sum_{l=1}^L m_l d_l =d\] satisfying the required relations and
such that
\[{\rm rank}(e^{\prime}_v) =  {\rm Tr}(e^{\prime}_v) =d \sum_{l=1}^L \frac{m_l}{d} \, {\rm rank}(e_v^l) = d u^{\prime}.\]

Hence, $u^{\prime} \le \xi_{\f}(G)^{-1}$ and it follows that $U \le \xi_{\f}(G)^{-1}$ so that the proof is complete.
\end{proof}

The following is the analogue of the inequality $\xi_{\f}(G)\le
\chi_{\q}(G)$ established in~\cite{arxiv:1212.1724}.

\begin{thm}\label{th_trqc}
Let $G$ be a graph. Then $\xi_{\tr}(G) \le \chi_{\qc}(G)$.
\end{thm}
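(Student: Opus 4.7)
The plan is to start with a commuting quantum $c$-colouring and extract from it, via the machinery of synchronous correlations, a configuration that witnesses $\xi_{\tr}(G) \le c$.

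First, I would assume $\chi_{\qc}(G) = c < \infty$ (otherwise the inequality is trivial). By Corollary~\ref{c_qcita}, $G$ admits a quantum $c$-colouring in the commuting model. I would then observe that such a colouring is automatically synchronous: the condition $p(i,j|v,v)=0$ for $i\neq j$ combined with $\sum_{i,j}p(i,j|v,v)=1$ forces $\sum_i p(i,i|v,v)=1$, so the associated correlation lies in $C^s_{\qc}(n,c)$ and is in fact a $c$-realisation of $G$ in the sense of Definition~\ref{d_crea}.

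Next, using Proposition~\ref{constantmarginals}, I would replace this with a minimal $c$-realisation $((E_{v,i}),(F_{w,j}),\cl H,\eta)$ whose marginals satisfy $\langle E_{v,i}\eta,\eta\rangle = 1/c$ for every $v\in V$ and every $i = 1,\dots,c$. By Theorem~\ref{trthm}(iii), the vector state $s(X) = \langle X\eta,\eta\rangle$ restricts to a tracial state on the unital C*-algebra $\cl A$ generated by $\{E_{v,i} : v\in V, 1\leq i\leq c\}$. Meanwhile, Proposition~\ref{zeroproducts} applied to the minimal $c$-realisation of $G$ gives $E_{v,i}E_{w,i} = 0$ for every edge $(v,w)\in E$ and every colour $i$.

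Now I would fix a single colour, say $i = 1$, and set $e_v := E_{v,1}\in \cl A$ for $v\in V$. These are projections with $e_v e_w = 0$ whenever $(v,w)\in E$, and $s(e_v) = 1/c$ for every $v\in V$. Thus $(\cl A, s, \{e_v\}_{v\in V})$ is an admissible triple in Definition~\ref{def:xi_tr} with parameter $u = 1/c$. Taking the supremum, $\sup u \geq 1/c$, and hence
\[
  \xi_{\tr}(G) = (\sup u)^{-1} \leq c = \chi_{\qc}(G).
\]
The one point that requires a moment's care is the synchronicity observation and the passage to constant marginals; everything else is a direct assembly of Theorem~\ref{trthm}, Proposition~\ref{zeroproducts} and Proposition~\ref{constantmarginals}. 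No infinite-dimensional obstruction arises precisely because the tracial rank is defined on arbitrary unital C*-algebras, bypassing the finite-dimensional issue that restricts $\xi_{\f}(G)$ to a bound on $\chi_{\q}(G)$.
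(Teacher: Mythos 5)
Your argument is correct and is essentially the paper's own proof, which likewise invokes Proposition~\ref{constantmarginals} to get constant marginals and Proposition~\ref{zeroproducts} to get $E_{v,1}E_{w,1}=0$ on edges. The one thing you spell out that the paper leaves implicit is the routine check that a commuting quantum $c$-colouring is automatically synchronous (so that a $c$-realisation in the sense of Definition~\ref{d_crea} actually exists), and the explicit appeal to Theorem~\ref{trthm}(iii) for the tracial state; both are correct and clarify the paper's terse presentation rather than deviating from it.
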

\begin{proof} 
Given any $c$-realisation of $G$, 
Proposition \ref{constantmarginals} shows that 
there exists a $c$-realisation $((E_{v,i})_{i=1}^c,(F_{w,j})_{j=1}^c ,\cl H,\eta)$ of $G$ 
such that $\langle E_{v,i}\eta,\eta\rangle = c^{-1}$ for all $v\in V$ and all $i = 1,\dots,c$.
By Proposition \ref{zeroproducts}, 
$E_{v,1}E_{w,1}=0$ when $(v,w)$ is an edge of $G$. Thus, $c^{-1} \le \xi_{\tr}(G)^{-1}$ and the proof is complete.
\end{proof}

\section{Graph homomorphisms and projective ranks}
\label{s_ghpr}
Recall that 
we set $C_{\loc}(n,c) = {\rm Loc}(n,c)$, 
$C_{\q}(n,c) = Q(n,c)$ and $C_{\qa}(n,c)= Q(n,c)^-$. Then
for ${\rm x} \in \{ \textrm{loc, q, qa, qc} \}$ and a graph $G$ on $n$ vertices, we have that 
$\chi_{\xx}(G) \le c$ if and only if there exists $A \in C_{\rm x}(n,c)$ such that $L_{G,c}(A) = 0$.


The condition $L_{G,c}((p(i,j|v,w))_{v,i,w,j})=0$ can more compactly be written as
\[p(i = j|v = w)=1 \text{ and } p(i = j|v \sim w) =0,\]
where 
$p(i = j| v \sim w) = 0$ means that  $p(i = j|v,w) = 0$ whenever $(v,w)\in E(G)$.
If we write $p(i,j|v,w) = \langle E_{v,i}F_{w,j}\eta,\eta\rangle$, where 
$(E_{v,i})_{i=1}^c$ and $(F_{w,j})_{j=1}^c$ are mutually commuting PVM's on a Hilbert space $\cl H$, 
$v,w\in V$, and $\eta\in \cl H$ is a unit vector, then we have that
\[p_A(i|v) := \sum_{j=1}^c p(i,j|v,w)= \langle E_{v,i} \eta, \eta \rangle \]
does not depend on $w$ and $j$; 
a similar statement holds for $p_B(j|w)$.

\begin{remark}\label{constantmarginalsrem} 
In the notation introduced above, 
Proposition~\ref{constantmarginals} shows that, for any
${\rm x}\in \{ \textrm{loc, q, qa, c} \}$, if $G$ is a graph on 
$n$ vertices and the correlation $(p(i,j|v,w))_{v,i,w,j} \in C_{\xx}(n,c)$ satisfies 
$p(i=j|v=w)=1$ and $p(i=j| v \sim w) =0$, then there is a correlation
$(p'(i,j|v,w))_{v,i,w,j} \in C_{\xx}(n,c)$
additionally satisfying $p_A'(i|v)= p_B'(j|w) = c^{-1}$ for every $v,w,i,j$.
\end{remark}

We recall the following characterization of points in $C_{\loc}(n,c):$ 
   \begin{align}\label{eq:L}
       C_{\loc}(n,c) &= {\rm Loc}(n,c) \\ 
              &=\Bigl\{ (p(i,j|v,w))_{v,i,w,j} : p(i,j|v,w) =
                         \sum_k \lambda_k \delta(i=f_k(v)) \delta(j=g_k(w)) \Bigr. , \\
              &\Bigl. \text{ for some } \lambda_k > 0 \mbox{ with } \sum_k \lambda_k = 1 
                      \text{ and some }  f_k, g_k : V \to \{1,\dots,c\} \Bigr\}.
    \end{align}

\noindent (Here, the $\delta$ function evaluates to 1 when its condition
argument is true and 0 otherwise, like the Iverson bracket.)

\begin{definition}\label{def:homgame}
   Let $G = (V(G),E(G))$ and $H = (V(H),E(H))$ 
   be graphs on $n$ and $m$ vertices, respectively. For ${\rm x} \in \{\loc, {\rm q}, \qa, \cc\}$ 
   write $G \homX H$
    if there is a correlation
    $(p(i,j|v,w))_{v,i,w,j} \in C_{\xx}(n,m)$ with $v,w \in V(G)$ and $i,j \in V(H)$ such that
    \begin{align*}
        p(i=j|v=w) &= 1 \\
        p(i \sim_H j | v \sim_G w) &= 1,
    \end{align*}
 where $p(i \sim_H j| v,w) := \sum_{(i,j) \in E(H)} p(i,j|v,w)$ and $p(i \sim_H j |v \sim_G w)=1$ means that $p(i \sim_H j | v,w) =1$ whenever $(v,w) \in E(G).$
     
    We will say that such a $(p(i,j|v,w))_{v,i,w,j}$ is an $\xx$-homomorphism from $G$ to $H$.
\end{definition}

Stated briefly, the above conditions are the requirement that $p$ be synchronous and that whenever the inputs $v$ and $w$ are adjacent in $G$, then, with probability 1, the output pair $(i,j)$ is adjacent in $H$.

We will sometimes write $G \to H$ for $G \homL H$, since it can be shown that this corresponds to the classical
definition of a graph homomorphism.
The homomorphism variant $G \homq H$ has been extensively studied in
\cite{roberson2013variations} and \cite{arxiv:1212.1724}.
The following is immediate from the definitions of \cite{pt_chrom} and \cite{cnmsw}:

We let $K_c$ denotes the complete graph on $c$ vertices, i.e.,  $(i,j) \in E(K_c)$ for all $i \ne j$.

\begin{prop}\label{p_xlq}
Let $G$ be a graph. For ${\rm x} \in \{\loc, {\rm q}, \qa, \cc\}$, 
we have that $\chi_{\xx}(G) = \min \{ c: G \homX K_c\}$.
\end{prop}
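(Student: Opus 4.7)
The plan is to show that the existence of an $\xx$-homomorphism $G \homX K_c$ is equivalent, as a condition on correlations, to $\chi_{\xx}(G)\le c$ as characterised by Proposition~\ref{p_chqaqc}, and then take the minimum over $c$ on both sides. In other words, I want to verify that the pair of conditions defining $G\homX K_c$ in Definition~\ref{def:homgame} is the same constraint on $p\in C_{\xx}(n,c)$ as the pair of conditions (\ref{tdefn}).

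For the synchronous condition, note that $\sum_{i,j} p(i,j|v,v) = 1$, so $p(i=j|v=w)=1$ is equivalent to $\sum_{i=1}^c p(i,i|v,v) = 1$, which by non-negativity of the entries of $p$ (valid in each of the four models $\xx \in \{\loc,\q,\qa,\cc\}$) is equivalent to $p(i,j|v,v)=0$ whenever $i\ne j$; that is precisely the first line of (\ref{tdefn}). For the edge condition, I would observe that $(i,j)\in E(K_c)$ if and only if $i\ne j$, so for $(v,w)\in E(G)$ the requirement $p(i\sim_{K_c}j\,|\,v,w)=1$ reads $\sum_{i\ne j} p(i,j|v,w) = 1$; using again $\sum_{i,j}p(i,j|v,w)=1$, this is equivalent to $\sum_{i=1}^c p(i,i|v,w)=0$, and by non-negativity, equivalent to $p(i,i|v,w)=0$ for every $i$ and every edge $(v,w)$, which is the second line of (\ref{tdefn}).

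Thus a correlation $p\in C_{\xx}(n,c)$ is an $\xx$-homomorphism $G\to K_c$ if and only if it satisfies (\ref{tdefn}). Taking the minimum $c$ for which such a $p$ exists yields $\min\{c : G\homX K_c\} = \chi_{\xx}(G)$, as claimed. The only substantive point in the argument is the non-negativity of entries of correlations in each of the models, which is standard (indeed the same non-negativity underlies the proof of Propositions~\ref{p_LGc} and~\ref{p_chqaqc}); everything else is a direct translation between the ``vanishing of forbidden entries'' formulation and the ``probability-$1$ homomorphism'' formulation, so no real obstacle is expected.
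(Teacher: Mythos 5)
Your proposal is correct and is exactly the unpacking of definitions that the paper has in mind: the paper states Proposition~\ref{p_xlq} as ``immediate from the definitions'' without spelling out a proof, and your two observations (that $p(i=j|v=w)=1$ is, via $\sum_{i,j}p(i,j|v,w)=1$ and non-negativity, the same as the first line of (\ref{tdefn}), and that $E(K_c)=\{(i,j):i\ne j\}$ turns the adjacency-preservation condition into the second line) are precisely what makes it immediate. One small addition worth stating explicitly, which you use but only mention in passing, is that $\sum_{i,j}p(i,j|v,w)=1$ holds for every correlation in each of the four models; this follows from $\sum_i E_{v,i}=\sum_j F_{w,j}=I$ in the $\q$ and $\qc$ cases, from the convex-combination form in the $\loc$ case, and by continuity in the $\qa$ case.
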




Let us denote by $\Gc$ the 
complementary graph of $G$, that is, the graph whose vertex set 
coincides with that of $G$ and for which $(v,w)$ is an edge precisely when 
$(v,w)$ is not an edge of $G$ (here it is assumed that $v\neq w$).
Proposition \ref{p_xlq} motivates us to define, for $\xx \in \{\loc, {\rm q}, \qa, \cc\}$,
\[
  \alpha_{\xx}(\Gc) = \omega_{\xx}(G) = \max\{ c : K_c \homX G \}.
\]
The parameters $\omega_{\xx}(G)$ are \emph{quantum clique numbers} of $G$ and 
are complementary to the corresponding chromatic numbers $\chi_{\xx}(G)$.
They will not be used later on in this paper.
Note, however, that $\omega_{\loc}(G)$ (resp. $\alpha_{\loc}(G)$) coincides with the classical 
clique number $\omega(G)$ (resp. independence number $\alpha(G)$) of $G$.




\begin{definition}\label{def:xiX} 
Let $G$ be a graph on $n$ vertices.
    For $\xx \in \{\loc, {\rm q}, \qa, \cc\}$, let $\xi_{\xx}(G)$ be the infimum of the 
    positive real numbers
    $t$ such that there exists $(p(a,b|v,w))_{v,a,w,b} \in C_{\xx}(n,2)$ 
    satisfying
    \begin{align*}
        p(a=b|v=w) &= 1
        \\ p(a=1,b=1 | v \sim w) &= 0
        \\ p(a=1|v) &= t^{-1}.
    \end{align*}
\end{definition}

Note that it makes sense, in the above definition, 
to use only $v$ in the third condition since $C_{\xx}(n,2)$ is
non-signaling.

Suppose that 
\[
  p' = (p'(i,j|v,w))_{v,i,w,j}\in C_{\cc}(n,c) \ \mbox{ and } \ 
  p'' = (p''(a,b|i,j))_{i,a,j,b}\in C_{\cc}(c,l).
\]
We let $p''p' = ((p''p')(a,b|v,w))_{v,a,w,b}$ be the matrix whose entries are given by 
\[
  (p''p')(a,b|v,w) = \sum_{i,j = 1}^c p''(a,b|i,j) p'(i,j|v,w).
\]
Thus, if $p'$ (resp. $p''$) is considered as an element of $M_{c^2,n^2}$ (resp. $M_{l^2,c^2}$), 
whose rows are indexed by the pairs $(a,b)$ (resp. $(i,j)$) and whose columns -- by the pairs
$(i,j)$ (resp. $(v,w)$), then $p''p'$ is the matrix product of $p''$ and $p'$.

\begin{lemma}\label{l_mul}
Let $\xx \in \{\loc, {\rm q}, \qa, \cc\}$. 
\begin{itemize}
\item[(i)] If $p'\in C_{\xx}(n,c)$ and $p''\in C_{\xx}(c,l)$ then 
$p''p'\in C_{\xx}(n,l)$;

\item[(ii)] If $p'\in C_{\xx}(n,c)$ and $p''\in C_{\loc}(c,l)$ then $p''p'\in C_{\xx}(n,l)$.
\end{itemize}
\end{lemma}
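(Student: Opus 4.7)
The plan is to realise the composed correlation $p''p'$ from realisations of $p'$ and $p''$ by viewing composition as a two-stage strategy: Alice runs a first sub-experiment which takes her input $v$ to an intermediate label $i$, and then feeds $i$ into a second sub-experiment to produce her final output $a$, while Bob does the analogous thing with $w$, $j$ and $b$. Concretely, I would fix realisations $((E'_{v,i}), (F'_{w,j}), \cl H', \xi')$ of $p'$ and $((E''_{i,a}), (F''_{j,b}), \cl H'', \xi'')$ of $p''$ (commuting POVMs from Theorem \ref{findimrealisation} and its $\qc$-analogue), and on $\cl H'\otimes \cl H''$ define
\[
  G_{v,a} := \sum_{i=1}^c E'_{v,i}\otimes E''_{i,a}, \qquad
  H_{w,b} := \sum_{j=1}^c F'_{w,j}\otimes F''_{j,b},
\]
with state $\xi'\otimes \xi''$.

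First I would check that this is a commuting realisation: (a) $(G_{v,a})_a$ and $(H_{w,b})_b$ are POVMs, since e.g.\ $\sum_a G_{v,a} = \sum_i E'_{v,i}\otimes I = I$; (b) $[G_{v,a}, H_{w,b}]=0$, because bilinearity reduces this to $[E'_{v,i},F'_{w,j}]=0$ and $[E''_{i,a},F''_{j,b}]=0$; and (c) a direct expansion gives
\[
  \langle G_{v,a} H_{w,b}(\xi'\otimes \xi''), \xi'\otimes \xi''\rangle
  = \sum_{i,j} p'(i,j|v,w)\, p''(a,b|i,j) = (p''p')(a,b|v,w).
\]
This settles $\xx = \cc$. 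For $\xx = \q$ I would feed in realisations on $\cl H' = \cl H'_A\otimes \cl H'_B$ and $\cl H'' = \cl H''_A\otimes \cl H''_B$, and use the canonical identification $(\cl H'_A\otimes \cl H'_B)\otimes(\cl H''_A\otimes \cl H''_B) \cong (\cl H'_A\otimes \cl H''_A)\otimes (\cl H'_B\otimes \cl H''_B)$; under this rearrangement $G_{v,a}$ acts on the Alice factor and $H_{w,b}$ on the Bob factor, giving a genuine tensor-product realisation.

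For $\xx = \qa$ I would approximate both $p'$ and $p''$ by sequences in $C_\q$, invoke the $\q$ case to obtain a composed sequence in $C_\q(n,l)$, and pass to the limit using the fact that $(p',p'')\mapsto p''p'$ is a bilinear operation between finite-dimensional spaces and hence continuous. For $\xx = \loc$ I would use the extremal description \eqref{eq:L}: a deterministic pair $(f_k,g_k)$ composes with $(f'_l,g'_l)$ to give the deterministic pair $(f'_l\circ f_k,\, g'_l\circ g_k)$, so composition of convex combinations of such deterministic correlations stays local. Finally, part (ii) is immediate from part (i) together with the containment $C_\loc(c,l)\subseteq C_\xx(c,l)$ valid for every choice of $\xx$.

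The main obstacle I anticipate is the bookkeeping in the $\q$ case: one must carefully separate Alice's and Bob's Hilbert spaces through the tensor-factor permutation and verify that the composed operators $G_{v,a}$ and $H_{w,b}$ really do act on the Alice factor $\cl H'_A\otimes \cl H''_A$ and the Bob factor $\cl H'_B\otimes \cl H''_B$ respectively. The remaining verifications are straightforward algebra using the PVM axioms and the assumed commutation relations.
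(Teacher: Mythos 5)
Your construction is the same one the paper uses: tensor the two realisations together and define composed measurement operators $\sum_i E'_{v,i}\otimes E''_{i,a}$ (the paper writes the factors in the opposite order, which is immaterial), then verify the POVM, commutation, and inner-product identities; the $\qa$ case by approximation and part (ii) by containment also match. The only slight divergence is your handling of $\loc$ via deterministic extremal strategies rather than via the paper's remark that local correlations admit realisations by a single commuting family of POVMs, but both are routine and the argument is essentially identical.
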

\begin{proof}
(i) Assume first that $\xx = \cc$.
Suppose that $\cl H'$ (resp. $\cl H'')$ is a Hilbert space, $\eta\in \cl H'$ (resp. $\eta\in \cl H''$)
is a unit vector and $(E'_{v,i})_{i=1}^c$ and $(F'_{w,j})_{j=1}^c$ 
(resp. $(E''_{i,a})_{a=1}^l$ and $(F''_{j,b})_{b=1}^l$) are mutually commuting PVM's 
such that 
\[
  p'(i,j|v,w) = \langle E'_{v,i}F'_{w,j}\eta',\eta'\rangle \ \mbox{ (resp. } 
  p''(a,b|i,j) = \langle E''_{i,a}F''_{j,b}\eta'',\eta''\rangle\mbox{)},
\]
for all $v,w,i,j,a,b$. 
Let 
$\cl H = \cl H''\otimes\cl H'$, $\eta = \eta''\otimes\eta'$,
\[
  E_{v,a} = \sum_{i=1}^c E''_{i,a}\otimes E'_{v,i} \ \mbox{ and } \ 
  F_{w,b} = \sum_{j=1}^c F''_{j,b}\otimes F'_{w,j}.
\]
It is clear that $(E_{v,a})_{a=1}^l$ and $(F_{w,b})_{b=1}^l$
are mutually commuting POVM's for all $v$ and $w$. Moreover, 
\begin{align*}
\langle E_{v,a}F_{w,b}\eta,\eta\rangle 
& =
\sum_{i,j=1}^c \langle (E''_{i,a}\otimes E'_{v,i})(F''_{j,b}\otimes F'_{w,j})(\eta''\otimes\eta'),(\eta''\otimes\eta')\rangle\\
& = 
\sum_{i,j=1}^c \langle E''_{i,a} F''_{j,b}\eta'',\eta''\rangle  \langle E'_{v,i}F'_{w,j}\eta',\eta'\rangle\\
& = \sum_{i,j=1}^c p''(a,b|i,j) p'(i,j|v,w) = (p''p')(a,b|v,w).
\end{align*}
It follows that $p''p'\in C_{\cc}(n,l)$. 


The arguments given above also apply in the case $\xx = {\rm q}$.
The claim concerning $\xx = \qa$ follows from the fact that 
$C_{\qa}(n,c) = \overline{C_{{\rm q}}(n,c)}$ for all $n$ and $c$. 
The case $\xx = \loc$ follows from the observation preceding Proposition \ref{p_chqaqc}. 

(ii) follows from (i) and the fact that $C_{\loc}(c,l)\subseteq C_{\xx}(c,l)$. 
\end{proof}


\begin{theorem}
    \label{thm:xiX_monotone}
    For $\xx\in \{\loc, {\rm q}, \qa, \cc\}$, we have that $\xi_{\xx}(G) \le \chi_{\xx}(G)$.
    Moreover, if $G \homX H$ then $\xi_{\xx}(G) \le \xi_{\xx}(H)$.
\end{theorem}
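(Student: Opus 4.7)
The plan is to prove the monotonicity statement first and deduce the inequality $\xi_{\xx}(G) \le \chi_{\xx}(G)$ as an immediate corollary. Setting $c = \chi_{\xx}(G)$, Proposition~\ref{p_xlq} gives $G \homX K_c$, so once monotonicity is available it yields $\xi_{\xx}(G) \le \xi_{\xx}(K_c)$, and it remains to note $\xi_{\xx}(K_c) \le c$. This last bound is witnessed by a local (and therefore $\xx$-) correlation: Alice and Bob share a uniformly random $k \in \{1,\dots,c\}$ and, on input $v$, each outputs $1$ if $v=k$ and $2$ otherwise. Then $p$ is synchronous, $p_A(1|v) = \Pr[v=k] = 1/c$, and for $v \ne w$ we have $p(1,1|v,w) = \Pr[v=k=w] = 0$, which is the edge condition for $K_c$.

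For the monotonicity, assume $G \homX H$ via some $p' \in C_{\xx}(n,m)$, where $n = |V(G)|$ and $m = |V(H)|$. Fix any $t > \xi_{\xx}(H)$ together with a witness $q' \in C_{\xx}(m,2)$ satisfying the three defining properties of $\xi_{\xx}$ applied to $H$. I would then form the composition $q := q'p' \in C_{\xx}(n,2)$, which lies in the correct correlation class by Lemma~\ref{l_mul}(i), and verify that $q$ witnesses $\xi_{\xx}(G) \le t$; passing to the infimum over $t > \xi_{\xx}(H)$ then gives $\xi_{\xx}(G) \le \xi_{\xx}(H)$.

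Three conditions on $q$ have to be checked. Synchronicity: $p'(i,j|v,v) = 0$ for $i \ne j$ by synchronicity of $p'$, and $q'(a,b|i,i) = 0$ for $a \ne b$ by synchronicity of $q'$, so $q(a,b|v,v) = 0$ whenever $a \ne b$. Edge condition: for $(v,w) \in E(G)$, the homomorphism property of $p'$ forces $p'(\cdot,\cdot|v,w)$ to be supported on $E(H)$, which excludes both the diagonal $i = j$ (no loops) and any non-adjacent off-diagonal pair; on this support $q'(1,1|i,j) = 0$ by the edge condition for $q'$, hence $q(1,1|v,w) = 0$. Marginals: since $q'_A(1|i) = 1/t$ is independent of $i$,
\[
 q_A(1|v) = \sum_{b,i,j} q'(1,b|i,j)\,p'(i,j|v,w) = \sum_i q'_A(1|i) \sum_j p'(i,j|v,w) = \frac{1}{t}\sum_{i,j} p'(i,j|v,w) = \frac{1}{t}.
\]

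There is no substantive obstacle: the whole argument reduces to bookkeeping once Lemma~\ref{l_mul} supplies closure of each correlation class under composition. The subtlest point is the edge check, which uses both structural properties of $p'$ simultaneously---synchronicity to annihilate the $i = j$ contributions (which $q'$ by itself does not kill, since $q'(1,1|i,i) = 1/t \ne 0$), and the homomorphism property to confine the remaining support to $E(H)$, where $q'$ does vanish.
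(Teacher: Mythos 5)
Your proof is correct, and the organization is a genuine (and arguably cleaner) variant of the paper's. The paper proves $\xi_{\xx}(G) \le \chi_{\xx}(G)$ and the monotonicity statement by two parallel applications of Lemma~\ref{l_mul}; for the first it composes the homomorphism $G\homX K_c$ with a deterministic ``is-the-colour-$1$?'' strategy, and it must invoke Proposition~\ref{constantmarginals} to normalize the marginals of the homomorphism correlation before composing. You instead prove monotonicity first and then reduce the other inequality to the single explicit bound $\xi_{\xx}(K_c)\le c$, witnessed by the symmetric shared-randomness strategy, which already has constant marginals $1/c$ by construction; since it is local it lies in every $C_{\xx}(c,2)$. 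That sidesteps Proposition~\ref{constantmarginals} entirely, and the monotonicity verification (composition via Lemma~\ref{l_mul}, then the three feasibility checks) matches the paper's argument essentially line for line.

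One small point worth flagging: your concluding remark mischaracterizes the edge check. You correctly argue in the body that, for $(v,w)\in E(G)$, the homomorphism condition $p'(i\sim_H j\,|\,v\sim_G w)=1$ alone forces $p'(i,j|v,w)=0$ for all $(i,j)\notin E(H)$, which in particular includes the diagonal $i=j$. Synchronicity of $p'$ is a statement about $v=w$ and contributes nothing when $v\sim w$ (so $v\neq w$); it is used only in the synchronicity check for $q$. The remark asserting that both properties are needed ``simultaneously'' in the edge check is therefore inaccurate, though it does not affect the validity of the proof steps themselves.
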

\begin{proof}
    Let $(p(i,j|v,w))_{v,i,w,j}$ be an $\xx$-homomorphism from $G$ to $K_c$ with $c=\chi_{\xx}(G)$.
    By Proposition \ref{constantmarginals}, we may assume that $p(i|v) = \frac{1}{c}$ for all $i$ and all $v$. 
Let $p_A'(a|i)$ (resp. $p_B'(b|j)$) be the probability distribution given by 
$p_A'(1|1) = 1, p_A'(0|1)=0$ (resp. $p_B'(1|1) = 1, p_B'(0|1)=0$) and $p_A'(1|i) = 0, p_A'(0|i)=1$ 
(resp. $p_B'(1|j) = 0, p_B'(0|j)=1$) if $i \neq 1$ (resp. $j\neq 1$). 
Set 
\[
  p'(a,b|i,j) = p_A'(a|i) p_B'(b|j), \ \ \ \ a,b = 0,1, i,j = 1,\dots,c.
\]
It is clear that $p'\in C_{\xx}(c,2)$.
By Lemma \ref{l_mul}, $p'p\in C_{\xx}(n,2)$. It remains to check that $p'p$ 
satisfies the conditions of Definition \ref{def:xiX}.

Suppose that $a\neq b$. If $i =j$ then $p'(a,b|i,j) = 0$, while if $i \neq j$ then 
$p(i,j|v,v) = 0$. It follows that $(p'p)(a,b|v,v) = 0$ for all $v$.
Suppose that $v\sim w$. Then $p(i,i|v,w) = 0$ for all $i$, while $p'(1,1|i,j) = 0$ if $i\neq j$. 
It follows that $(p'p)(1,1|v,w) = 0$.
Finally, for fixed $v$, we have
\begin{align*}
(p'p)(1|v) & = \sum_{i,j=1}^c p'(1,0|i,j)p(i,j|v,v) + p'(1,1|i,j)p(i,j|v,v)\\
& = \sum_{i,j=1}^c p_A'(1|i) p_B'(0|j) p(i,j|v,v) + p_A'(1|i) p_B'(1|j) p(i,j|v,v)\\
& = \sum_{j=1}^c p_B'(0|j) p(1,j|v,v) + p_B'(1|j) p(1,j|v,v)\\
& = \sum_{j=1}^c p(1,j|v,v) = \frac{1}{c}.
\end{align*}

We now show the monotonicity of $\xi_{\xx}$. Suppose that $G \homX H$  and 
let 
$$(p(i,j|v,w))_{v,i,w,j}\in C_{\xx}(|V(G)|,|V(H)|)$$ 
be as in Definition \ref{def:homgame}.
Let also 
$(p'(a,b|i,j))_{i,a,j,b} \in C_{\xx}(|V(H)|,2)$ satisfy the three equations of Definition \ref{def:xiX} for the graph $H$. 
Suppose that $v\in V(G)$ and $a\neq b$. Then, 
if $i\neq j$ we have that $p(i,j|v,v) = 0$, while $p'(a,b|i,i) = 0$. Thus,
$(p'p)(a,b|v,v) = 0$. Suppose that $(v,w)\in E(G)$. If $(i,j)\not\in E(H)$ then 
$p(i,j|v,w) = 0$, while if $(i,j)\in E(H)$ then $p'(1,1|i,j) = 0$; thus, 
$(p'p)(1,1|v,w) = 0$. 
Finally, 
\begin{align*}
(p'p)(1|v) & = \sum_{i,j=1}^c p'(1,0|i,j)p(i,j|v,v) + p'(1,1|i,j)p(i,j|v,v)\\
& = \sum_{i=1}^c p'(1,0|i,i)p(i,i|v,v) + p'(1,1|i,i)p(i,i|v,v)\\
& = \sum_{i=1}^c p'(1,1|i,i)p(i,i|v,v) =  \frac{1}{t} \sum_{i=1}^c p(i,i|v,v) = \frac{1}{t},
\end{align*}
for all $v\in V(G)$. By Lemma \ref{l_mul}, $p'p\in C_{\xx}(|V(G)|,2)$. 
Thus, $\xi_{\xx}(G) \le \xi_{\xx}(H)$. 
\end{proof}

\begin{lemma}
    \label{thm:Qq_synchronous_outputs}
    Suppose that $p(i,j|x,y) \in C_{\q}(n,2)$(respectively, $C_{\cc}(n,2)$) satisfies $p(i=j|x=y)=1$.
    Then there exist a finite dimensional C*-algebra $\cl A$ (resp. a C*-algebra $\cl A$),
    a tracial state $s: \cl A \to \bb C$ and projections $E_{v,i}\in \cl A$, $v\in V$, $i = 1,2$, such that
    \begin{enumerate}
        \item $\sum_i E_{v,i} = I$, $v\in V$;
        \item $p(i,j|v,w) = s(E_{v,i} E_{w,j})$ for all $v,w\in V$ and all $i,j = 1,2$;
        \item $E_{v,i} E_{w,j} = 0$ if and only if $p(i,j|v,w)=0$.
    \end{enumerate}
\end{lemma}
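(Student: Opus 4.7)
The plan is to assemble the lemma from three ingredients already available in this section: the realisation theorem (Theorem \ref{findimrealisation}), the minimisation procedure described just before Definition \ref{d_crea}, and the zero-product characterisation (Proposition \ref{zeroproducts}). The hypothesis $p(i=j|x=y)=1$ forces $p(i,j|x,x)=0$ for $i\ne j$, so $p$ is synchronous. In the $C_{\cc}(n,2)$ case, $p$ admits a realisation $\bigl((E_{v,i}),(F_{w,j}),\cl H,\eta\bigr)$ by definition. In the $C_{\q}(n,2)$ case, Theorem \ref{findimrealisation} provides such a realisation with $\cl H$ finite dimensional.

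Next I would pass to a minimal realisation. Following the construction described in the paragraph preceding Definition \ref{d_crea}, let $\cl H_0$ be the closed subspace of $\cl H$ generated by the vectors $F_{w_1,j_1}\cdots F_{w_k,j_k}\eta$, and let $E^0_{v,i}, F^0_{w,j}$ denote the restrictions of $E_{v,i}, F_{w,j}$ to $\cl H_0$. That discussion shows $\cl H_0$ reduces both families, that the restrictions are PVMs satisfying the commutativity relation in (\ref{commprop}), and that $\eta$ is cyclic for the C*-algebra generated by $\{F^0_{w,j}\}$; thus $\bigl((E^0_{v,i}),(F^0_{w,j}),\cl H_0,\eta\bigr)$ is a minimal realisation of the same correlation $p$. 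Crucially, finite dimensionality of $\cl H$ is preserved by the restriction, so $\cl H_0$ is finite dimensional in the $C_{\q}$ case.

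Now define $\cl A$ to be the C*-algebra generated inside $B(\cl H_0)$ by $\{E^0_{v,i}:v\in V,\,i=1,2\}$, and set $s(T)=\langle T\eta,\eta\rangle$. In the $C_{\q}$ case, $\cl A\subseteq B(\cl H_0)$ is automatically finite dimensional. Condition (1) is immediate, since $(E^0_{v,i})_{i=1}^2$ is a PVM. Theorem \ref{trthm}(iii) applied to the minimal realisation gives that $s$ is a tracial state on $\cl A$. For (2), Theorem \ref{trthm}(ii) yields
\[
  p(i,j|v,w)=\langle E_{v,i}F_{w,j}\eta,\eta\rangle=\langle E^0_{v,i}F^0_{w,j}\eta,\eta\rangle=\langle E^0_{v,i}E^0_{w,j}\eta,\eta\rangle = s(E^0_{v,i}E^0_{w,j}).
\]
For (3), the direction $E^0_{v,i}E^0_{w,j}=0\Rightarrow p(i,j|v,w)=0$ is immediate from (2); the converse is precisely the equivalence (i)$\Leftrightarrow$(ii) of Proposition \ref{zeroproducts} applied to the minimal realisation.

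There is no serious obstacle here: the lemma is essentially a packaging of existing results, and the only point requiring care is that Proposition \ref{zeroproducts} demands a \emph{minimal} realisation (in the sense that $\eta$ be cyclic for the $F$'s), which is why the restriction to $\cl H_0$ is needed before Proposition \ref{zeroproducts} can be invoked. A cleaner but alternative route, which I would record as a remark rather than a proof, is to start instead from Corollary \ref{synchronouscor} and quotient $\cl B$ by the two-sided ideal $\{a\in\cl B:\tau(a^*a)=0\}$ (which is an ideal thanks to the trace property together with Cauchy--Schwarz); faithfulness of the induced trace on the quotient, combined with the identity $\tau((e_{v,i}e_{w,j})^*(e_{v,i}e_{w,j}))=\tau(e_{v,i}e_{w,j})$ (again using the trace property and $e^2=e$), yields (3) without going through a realisation.
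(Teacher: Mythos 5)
Your main argument is correct and follows the same route the paper intends: the paper's proof is a one-line citation of Corollary~\ref{synchronouscor} and Proposition~\ref{zeroproducts}, and you have correctly supplied the step that citation leaves implicit, namely that Proposition~\ref{zeroproducts} requires a \emph{minimal} realisation, obtained by restricting to $\cl H_0$ as described before Definition~\ref{d_crea} (noting that this preserves finite dimensionality in the $C_{\q}$ case). Your alternative remark --- quotienting by the trace ideal $\{a:\tau(a^*a)=0\}$ and using $\tau\bigl((e_{v,i}e_{w,j})^*e_{v,i}e_{w,j}\bigr)=\tau(e_{v,i}e_{w,j})$ to get faithfulness directly --- is also valid and is a somewhat cleaner way to obtain item (3) without appealing to minimality of a Hilbert-space realisation.
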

\begin{proof} The existence of the C*-algebra, tracial state  and corresponding
  operators follow from the fact that the state is synchronous,
  Corollary~\ref{synchronouscor} and Proposition~\ref{zeroproducts}.
\end{proof}

A graph $G$ is said to have an {\em $a/b$-coloring} provided that to each vertex we can assign a $b$ element subset of $\{ 1,...,a \}$ such that whenever two vertices are adjacent, their corresponding subsets are disjoint. The {\em fractional chromatic number of G} is then defined by
$\chi_f(G) = \inf \{ a/b | \text{ G has an a/b-coloring } \}.$  Alternatively, there is a family of graphs known as the {\em Kneser graphs K(a,b), } where each vertex corresponds to a $b$ element subset of an $a$ element set with vertices adjacent when the sets are disjoint, and $\chi_f(G) = \inf \{ a/b:  G \to K(a,b) \}.$  For more discussion of these ideas and proofs see \cite{gr}.

\begin{theorem}\label{thm:xiq_is_xif}
We have that
     \begin{enumerate}
\item $\xi_{\loc}(G)$ is equal to the fractional chromatic number $\chi_{\f}(G)$;
 \item   $\xi_{\q}(G)$ is equal to the projective rank, $\xi_{\f}(G)$;
\item $\xi_{\qc}(G)$ is equal to the tracial rank, $\xi_{\tr}(G)$.
\end{enumerate}
\end{theorem}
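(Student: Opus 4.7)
The plan is to identify, in each of the three cases, a correlation satisfying Definition \ref{def:xiX} with an appropriate piece of C*-algebraic data --- a tracial state $s$ on a C*-algebra $\cl A$ together with projections $\{e_v\}$ --- and read off the desired equality. The first condition of Definition \ref{def:xiX} is exactly synchronicity, so the machinery of Section \ref{s_tre} applies throughout.

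For parts (2) and (3) I would establish both inequalities. For $\xi_{\xx}(G) \le \xi$ (with $\xi$ the corresponding rank and $\xx \in \{\q, \qc\}$), I would start from data $(\cl A, s, \{e_v\})$, finite-dimensional when $\xx = \q$ (by Proposition \ref{fdtr=proj}), satisfying $e_v e_w = 0$ on edges and $s(e_v) = u$ for all $v$. Setting $E_{v,1} = e_v$ and $E_{v,2} = 1 - e_v$, the converse direction of Corollary \ref{synchronouscor} produces a synchronous correlation $p(i,j|v,w) = s(E_{v,i}E_{w,j}) \in C_{\xx}^s(n,2)$ meeting Definition \ref{def:xiX} with $t^{-1} = u$. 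Conversely, given $p \in C_{\xx}(n,2)$ satisfying Definition \ref{def:xiX}, Lemma \ref{thm:Qq_synchronous_outputs} applied to $p$ yields $(\cl A, s, \{E_{v,i}\})$ (finite-dimensional when $\xx = \q$) with $p(i,j|v,w) = s(E_{v,i}E_{w,j})$, $E_{v,1}+E_{v,2}=1$, and the crucial equivalence $E_{v,i}E_{w,j} = 0 \Leftrightarrow p(i,j|v,w) = 0$. Setting $e_v = E_{v,1}$ gives $s(e_v) = t^{-1}$ and $e_v e_w = 0$ on edges, so $u := t^{-1}$ is admissible for $\xi_{\tr}$ (or its finite-dimensional version in the sense of Proposition \ref{fdtr=proj} for part (2)).

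For part (1), I would use the description (\ref{eq:L}) of $C_{\loc}(n,2)$: write $p(i,j|v,w) = \sum_k \lambda_k \delta(i = f_k(v))\delta(j = g_k(w))$. Synchronicity forces $f_k(v) = g_k(v)$ for every $v$ on the support of $\lambda$, so I may take $f_k = g_k$. The edge condition then forces $I_k := f_k^{-1}(1)$ to be an independent set in $G$, and the marginal condition becomes $\sum_{k:\,v\in I_k} \lambda_k = t^{-1}$ for every $v$, which is precisely the LP characterisation of $\chi_{\f}(G)^{-1}$. Since that LP attains its optimum at a rational point, this matches $\xi_{\loc}(G)$ with the infimum of ratios $a/b$ over $a/b$-colourings, i.e.\ with $\chi_{\f}(G)$.

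The principal technical point across parts (2) and (3) is the upgrade from the scalar vanishing $p(1,1|v,w) = 0$ to the operator vanishing $e_v e_w = 0$ demanded by Definition \ref{def:xi_tr}; this is exactly what Lemma \ref{thm:Qq_synchronous_outputs}(3) was arranged to provide, via Proposition \ref{zeroproducts} applied to a minimal realisation. In particular, for part (2) one must note that the minimal subspace $\cl H_0$ of a finite-dimensional realisation is itself finite-dimensional, so the resulting C*-algebra remains finite-dimensional and Proposition \ref{fdtr=proj} identifies the supremum with $\xi_{\f}(G)$ rather than merely $\xi_{\tr}(G)$.
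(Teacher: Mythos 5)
Your proposal is correct and follows essentially the same structure as the paper: part~(1) via the explicit decomposition~\eqref{eq:L}, and parts~(2), (3) via Lemma~\ref{thm:Qq_synchronous_outputs} for one inequality and Corollary~\ref{synchronouscor} for the other. The one genuine divergence is in the direction $\xi_{\q}(G) \le \xi_{\f}(G)$ of part~(2): the paper takes a $d/r$-projective representation $(E_v)$, forms the maximally entangled state $\eta = d^{-1/2}\sum_i e_i\otimes e_i$, and sets $F_{w,1} = \overline{E}_w$ to produce the required correlation directly, whereas you first pass through Proposition~\ref{fdtr=proj} to get a tracial state on a finite-dimensional C*-algebra and then invoke the converse of Corollary~\ref{synchronouscor}. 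Your route is more uniform with part~(3) and stays entirely inside the tracial-state machinery; the paper's is more explicit and shows concretely which quantum strategy witnesses the bound. Both are valid. One small remark on part~(1): you sketch the direction $\chi_{\f}(G)\le\xi_{\loc}(G)$ carefully, but for $\xi_{\loc}(G)\le\chi_{\f}(G)$ you only gesture at an LP correspondence; the paper spells out the shared-randomness protocol (Alice and Bob sample a common colour $k$ and each outputs $1$ iff $k$ is in her/his assigned $b$-subset), and it would be worth making that construction explicit rather than appealing to rationality of the LP optimum, which is not really the issue.
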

\begin{proof}
    To prove $\xi_{\loc}(G) = \chi_{\f}(G)$, 
    colour the graph $G$ with subsets $S_v \subseteq \{1,\dots,p\}$ of size $\abs{S_v}=q$
    where $p/q=\chi_{\f}(G)$ (this is possible since $\chi_{\f}(G)$ can be interpreted in terms of
    homomorphisms to Kneser graphs).
    Consider the following protocol in $C_{\loc}(n,2)$: Alice and Bob receive vertices
    $v$ and $w$.  They use shared randomness to choose $k \in \{1,\dots,p\}$.
    Alice outputs $1$ if $k \in S_v$ while Bob outputs $1$ if $k \in S_w$.
    The corresponding correlation 
    satisfies the conditions of Definition \ref{def:xiX} with $t=p/q$, so $\xi_{\loc}(G) \le \chi_{\f}(G)$.

    Conversely, suppose that $p(a,b|x,y) \in C_{\loc}(n,2)$ satisfies the conditions of
    Definition \ref{def:xiX} for some $t$.
    By~\eqref{eq:L}, we have
    $p(a,b|v,w) = \sum_k \lambda_k \delta(a=f_k(v)) \delta(b=g_k(w))$ with
    $\lambda_k > 0$ and $\sum_k \lambda_k = 1$.
    The condition $p(a=b|v=w)=1$ requires $f_k=g_k$ for all $k$.
    The condition $p(a=1,b=1 | v \sim w)=0$ guarantees that $f_k(v)f_k(w)=0$ for all $k$
    when $v \sim w$; consequently $V_k := \{ v \in V(G) : f_k(v)=1 \}$ is an independent
    set.  Assigning weight $t \lambda_k$ to set $V_k$ gives a fractional colouring of weight
    $t$ (see Section 7.1 of~\cite{gr}).
    Indeed, $\sum_k t \lambda_k = t$ and for each $v \in V(G)$ we have
    $\sum_{V_k\ni v} t \lambda_k = t \sum_k f_k(v) \lambda_k = t p(a=1|v) = 1$.
    So $\chi_{\f}(G) \le \xi_{\loc}(G)$.

    To prove $\xi_{\q}(G) = \xi_{\f}(G)$,
    suppose $p(a,b|v,w) \in C_{\q}(n,2)$ satisfies the conditions of Definition 
    \ref{def:xiX}. In particular it is synchronous. Let
     $\{E_{v,i} : i = 1,2, v\in V\} \in M_d$ be the representation guaranteed by
    \ref{thm:Qq_synchronous_outputs}.
The operators $E_{v,1}$ then satisfy the conditions in Proposition~\ref{fdtr=proj} and hence, referring to the proof,  $t^{-1} \in \cl U$.
    Taking the infimum over all possible $t$ gives $\xi_{\f}(G) \le \xi_{\q}(G)$.

    Conversely, suppose that $(E_v)_{v\in V}$ is a $d/r$-projective representation of $G$.
    Let $\eta = d^{-1/2} \sum_i e_i \ot e_i$, where the $\{e_i\}_{i=1}^d$ 
    is the standard orthonormal basis and set
    $E_{v,1}=E_v$, $E_{v,0} = (I-E_v)$,
    $F_{w,1}=\overline{E}_w$, and $F_{w,0} = (I-\overline{E}_w)$.
    The probability distribution $p(a,b|v,w) = \langle E_{v,a} \ot F_{w,b} \eta, \eta \rangle$
    is feasible for \ref{def:xiX} with value
    $t^{-1} = p(a=1|v) = \langle E_{v,1}\eta,\eta\rangle = d^{-1} \Tr(E_v) = r/d$.
    So $\xi_{\q}(G) \le d/r$.  Taking the infimum over possible values of $d/r$ gives
    $\xi_{\q}(G) \le \xi_{\f}(G)$.

Finally, we prove $\xi_{\qc}(G) = \xi_{\tr}(G)$. It follows from
Lemma~\ref{thm:Qq_synchronous_outputs} that if $t$ is feasible for
$\xi_{\qc}(G)$ then there exists a C*-algebra and tracial state
satisfying the conditions of Definition~\ref{def:xi_tr}.
Thus, $\xi_{\tr}(G) \le \xi_{\qc}(G)$.

Conversely, assume that we have a C*-algebra $\cl A$ a tracial state
$s$ and projections $E_v$ and a real number $u=t^{-1}$ satisfying the
hypotheses of Definition~\ref{def:xi_tr}.
We set $E_{v,1} = E_v$ and $E_{v,0} = I - E_v$.

If we set $p(i,j|v,w) = s(E_{v,i}E_{w,j})$ then by Corollary~\ref{synchronouscor}, we have that $(p(i,j|v,w)) \in C^s_{\qc}(n,2) \subseteq C_{\qc}(n,2)$.
Thus $t$
is feasible for $\xi_{\qc}(G)$.
This shows $\xi_{\qc}(G) \le \xi_{\tr}(G)$ and the proof is complete.
\end{proof}

\begin{thm} If there exists a graph $G$ for which $\xi_{\f}(G)$ is irrational,
  then the closure conjecture is false, and consequently, Tsirelson's
  conjecture is false. In fact, if $G$ is a graph on $n$ vertices with
  $\xi_{\f}(G)$ irrational, then $C_{\q}(n,2) \ne C_{\q}(n,2)^-$.
\end{thm}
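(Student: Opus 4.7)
The plan is a proof by contradiction: I would assume $C_{\q}(n,2) = C_{\q}(n,2)^-$ and derive that $\xi_{\f}(G)$ must be rational. Under the assumption, $C_{\q}(n,2) = C_{\qa}(n,2)$ is closed in $M_{2n}$, and since each entry of a correlation lies in $[0,1]$, it is in fact compact. The feasible set for $\xi_{\q}(G)$, namely
\[
\Omega = \{p \in C_{\q}(n,2) : p(a{=}b|v{=}w) = 1,\ p(1,1|v,w) = 0 \text{ for } (v,w) \in E,\ p_A(1|v) \text{ constant in } v\},
\]
is closed, hence compact. By Theorem \ref{thm:xiq_is_xif}, $\xi_{\q}(G) = \xi_{\f}(G) < \infty$, so $\Omega$ is nonempty and the continuous function $p \mapsto p_A(1|v_0)$ attains a positive maximum $M$ on $\Omega$. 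Hence the infimum defining $\xi_{\q}(G)$ is \emph{attained}: there exists $p^* \in C_{\q}(n,2)$ with $p^*_A(1|v) = M = 1/\xi_{\f}(G)$ for every $v \in V$.

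I would then apply Lemma \ref{thm:Qq_synchronous_outputs} to $p^*$. Since $p^*$ is a synchronous element of $C_{\q}(n,2)$, this furnishes a \emph{finite-dimensional} C*-algebra $\cl A$, a tracial state $s$ on $\cl A$, and projections $E_{v,1} \in \cl A$ such that $E_{v,1}E_{w,1} = 0$ whenever $(v,w) \in E$, and $s(E_{v,1}) = p^*(1,1|v,v) = p^*_A(1|v) = 1/\xi_{\f}(G)$ for every $v$. In the notation of Proposition \ref{fdtr=proj}, this shows that $u := 1/\xi_{\f}(G)$ lies in the set $\cl U$ of real numbers realisable by finite-dimensional tracial data satisfying the edge constraints of $G$.

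The crucial observation is that any $u \in \cl U$ with $u = \sup \cl U = 1/\xi_{\f}(G)$ must in fact be rational. Writing $\cl A = \bigoplus_{l=1}^L M_{d_l}$ and $E_{v,1} = \bigoplus_l e_v^l$, the proof of Proposition \ref{fdtr=proj} associates to the pair $(\cl A, (E_{v,1})_v)$ the number
\[
u' = \max\left\{t : q_l \ge 0,\ \sum_{l=1}^L q_l d_l = 1,\ \sum_{l=1}^L q_l \rank(e_v^l) = t \text{ for all } v\right\},
\]
and establishes $u \le u' \le 1/\xi_{\f}(G)$. Since $u'$ is the optimum of a linear program with integer coefficient data, it is rational; the assumption $u = 1/\xi_{\f}(G)$ then collapses the inequalities to $u = u' \in \bb Q$, contradicting the irrationality of $\xi_{\f}(G)$. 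Therefore $C_{\q}(n,2) \ne C_{\q}(n,2)^-$, which is the failure of the closure conjecture. Tsirelson's conjecture then fails automatically, because $C_{\q} = C_{\cc}$ together with $C_{\q} \subseteq C_{\qa} \subseteq C_{\cc}$ would force $C_{\q} = C_{\qa}$.

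The main obstacle, I expect, is the bookkeeping for the rationality step: one must verify carefully that the LP optimum $u'$ in the proof of Proposition \ref{fdtr=proj} is genuinely an upper bound on $u$ (so that equality $u = u'$ is forced when $u$ already saturates $\sup \cl U$), and that the attained synchronous correlation $p^*$ yields, via Lemma \ref{thm:Qq_synchronous_outputs}, exactly the projections needed to place $1/\xi_{\f}(G)$ itself into $\cl U$ rather than only an approximating value.
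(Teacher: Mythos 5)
Your proposal is correct and follows essentially the same route as the paper's proof: use the attainment of the infimum defining $\xi_{\q}(G)$ (which, under closedness of $C_{\q}(n,2)$, follows from compactness), pass via Lemma~\ref{thm:Qq_synchronous_outputs} to a finite-dimensional tracial realisation, and invoke the integer-LP rationality argument from the proof of Proposition~\ref{fdtr=proj} to conclude that the attained value would be rational. You spell out the compactness step and the verification that the saturating value actually lands in $\cl U$ (and hence coincides with the rational LP optimum $u'$) more explicitly than the paper does, which is a useful clarification rather than a deviation.
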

\begin{proof} Let $n$ be the number of vertices of $G$. 
By Theorem \ref{thm:xiq_is_xif}, $\xi_{\f}(G)^{-1} = \xi_{\q}(G)^{-1}$ 
and this value is characterized as the infimum of the positive real numbers $t$ over  
the elements $(p(a,b|v,w))_{v,a,w,b} \in C_{\q}(n,2)$ such that $p(a=1|v) = t^{-1}$.
If this infimum was attained, then there would exist a representation 
of $(p(a,b|v,w))_{v,a,w,b}$ {\it via} a finite dimensional C*-algebra.
It follows by the proof of Proposition~\ref{fdtr=proj} 
that the infimum is a rational number.

Hence, if the infimum is attained by a point in $C_{\q}(n,2)$ then it must be rational.
Thus, if $\xi_{\q}(G)$ is irrational, then we must have a point in $C_{\q}(n,2)^-$ that is not in $C_{\q}(n,2)$.
\end{proof}

\begin{cor} 
If there exists a graph $G$ with $\xi_{\tr}(G)$ irrational, then Tsirelson's conjecture is false. In fact, if $G$ has $n$ vertices, then $C_{\qc}(n,2) \ne C_q(n,2)$.
\end{cor}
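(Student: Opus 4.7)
The strategy is to mirror the proof of the preceding theorem, exploiting the fact that $C_{\qc}(n,2)$ is automatically compact, so the extremal configuration defining $\xi_{\tr}(G)^{-1}=\xi_{\qc}(G)^{-1}$ (via Theorem~\ref{thm:xiq_is_xif}(3)) is always attained. Concretely, $C_{\qc}(n,2)$ is the image of the state space of $C^*(F(n,2))\otimes_{\max}C^*(F(n,2))$ under a continuous linear map, hence compact; the synchronicity and edge-constraints defining the feasible set in Definition~\ref{def:xiX} are closed, and the objective $p\mapsto p(a=1|v)$ is continuous. Thus the supremum is attained at some $p^{*}\in C_{\qc}(n,2)$ with $p^{*}(a=1|v)=\xi_{\tr}(G)^{-1}$.

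Next, suppose for contradiction that $C_{\qc}(n,2)=C_q(n,2)$. Then $p^{*}\in C_q(n,2)$, so by Theorem~\ref{findimrealisation} it admits a finite-dimensional realisation, and Lemma~\ref{thm:Qq_synchronous_outputs} (in the $C_q$ case) produces a finite-dimensional C*-algebra $\cl A$, a tracial state $s$ on $\cl A$, and projections $E_{v,1}\in\cl A$ with $s(E_{v,1})=u:=\xi_{\tr}(G)^{-1}$ for all $v$, satisfying $E_{v,1}E_{w,1}=0$ for $(v,w)\in E(G)$. The construction in the proof of Proposition~\ref{fdtr=proj} then yields a rational number $u^{\prime}$ with $u\le u^{\prime}\le\xi_{\f}(G)^{-1}$, the latter inequality coming from the explicit $d/r$-projective representation built by tensoring each $E_{v,1}^{l}$ with the appropriate multiplicity $I_{m_l}$. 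Combined with the easy inequality $\xi_{\qc}(G)\le\xi_q(G)$ (more feasible correlations produce a smaller infimum), which rearranges to $u=\xi_{\qc}(G)^{-1}\ge\xi_q(G)^{-1}=\xi_{\f}(G)^{-1}$ by Theorem~\ref{thm:xiq_is_xif}(2), this squeezes $\xi_{\f}(G)^{-1}\le u\le u^{\prime}\le\xi_{\f}(G)^{-1}$, forcing $u=u^{\prime}$. Hence $\xi_{\tr}(G)^{-1}$ is rational, contradicting the irrationality hypothesis.

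The only substantive point beyond the preceding theorem is the automatic-compactness observation that removes any ``if the infimum were attained'' caveat; everything else reduces to the rationality argument from the proof of Proposition~\ref{fdtr=proj}. I expect the main bookkeeping issue is simply to confirm that the chain of inequalities really does squeeze $u$ to the rational value $u^{\prime}$, which requires invoking both directions: $u\le u^{\prime}\le\xi_{\f}(G)^{-1}$ from Proposition~\ref{fdtr=proj}, and $u\ge\xi_{\f}(G)^{-1}$ from the set inclusion $C_q(n,2)\subseteq C_{\qc}(n,2)$ together with Theorem~\ref{thm:xiq_is_xif}(2).
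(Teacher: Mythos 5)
Your proof is correct, and it reaches the same conclusion by the same core idea as the paper's, but packages it differently. The paper's own proof is a one-liner: it assumes Tsirelson's conjecture, observes that then the closure conjecture holds and $\xi_{\tr}(G)=\xi_{\qc}(G)=\xi_{\q}(G)=\xi_{\f}(G)$, and then cites the preceding theorem (irrational $\xi_{\f}$ forces $C_{\q}(n,2)\ne C_{\q}(n,2)^-$) to get a contradiction. You instead assume directly that $C_{\qc}(n,2)=C_{\q}(n,2)$ and rerun the preceding theorem's rationality argument \emph{inside} that hypothesis: compactness of $C_{\qc}(n,2)$ gives an attained optimizer $p^*$, the $C_{\q}$ case of Lemma~\ref{thm:Qq_synchronous_outputs} produces a finite-dimensional tracial-state representation, and the construction in Proposition~\ref{fdtr=proj} squeezes $u=\xi_{\tr}(G)^{-1}$ against $\xi_{\f}(G)^{-1}$ to a rational value. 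This has the advantage of making explicit the compactness of $C_{\qc}(n,2)$, which the paper's terse proof uses only implicitly (it is what makes Tsirelson imply the closure conjecture, and what makes $C_{\qc}(n,2)=C_{\q}(n,2)$ incompatible with $C_{\q}(n,2)\neq C_{\q}(n,2)^-$). Two minor remarks: once you have posited $C_{\qc}(n,2)=C_{\q}(n,2)$, the ``easy inequality'' $\xi_{\qc}(G)\le\xi_{\q}(G)$ is automatically an equality, so the squeeze is even simpler than you state; and to recover the first sentence of the corollary you should add the (trivial) observation that Tsirelson's conjecture in particular asserts $C_{\q}(n,2)=C_{\cc}(n,2)=C_{\qc}(n,2)$, so falsity of the latter falsifies Tsirelson.
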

\begin{proof} If Tsirelson's conjecture is true, then the closure conjecture is true and $\xi_{\tr}(G) = \xi_{\qc}(G) = \xi_{\q}(G) = \xi_{\f}(G)$, contradicting the previous result.
\end{proof}


Given that $\xi_{\loc}(G)=\chi_{\f}(G)$, $\xi_{\q}(G)=\xi_{\f}(G)$, and $\xi_{\qc}(G)=\xi_{\tr}(G)$, we will henceforth use
the more established notation $\chi_{\f}(G)$, $\xi_{\f}(G)$, $\xi_{\tr}(G)$ and will drop
the notation $\xi_{\loc}(G)$, $\xi_{\q}(G)$, $\xi_{\qc}(G)$.

We turn now to a deeper investigation of $\xi_{\tr}(G)$. We first show
that the equality for a feasible value can be relaxed to an inequality.

\begin{theorem}
    \label{thm:xi_tr_ge}
    The number $\xi_{\tr}(G)$ is equal to the minimum of the positive real numbers 
    $t$ such that there exist a Hilbert space $\cl H$, a unit vector $\eta\in \cl H$, 
    a (unital) C*-algebra $\cl A\subseteq \cl B(H)$
    and projections
    $E_v\in \cl A$, $v \in V$, satisfying
    \begin{align}
        &\; \textrm{ the map } X\to s(X)=\langle X\eta,\eta \rangle
        \textrm{ is a tracial state on } \mathcal{A};
        \label{eq:xi_tr_ge_1}
        \\ &\; E_vE_w=0 \textrm{ if } v \sim w;
        \label{eq:xi_tr_ge_2}
        \\ &\; \langle E_v \eta,\eta \rangle \ge t^{-1} \textrm{ for all } v \in V(G).
        \label{eq:xi_tr_ge_3}
    \end{align}
\end{theorem}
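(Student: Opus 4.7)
The plan is to prove the two inequalities bounding $\xi_{\tr}(G)$ from the infimum in the relaxed formulation, and then establish attainment. One direction is immediate: given $(\cl A,s,(e_v))$ realising the value $u$ in Definition~\ref{def:xi_tr}, the GNS construction of $(\cl A,s)$ produces a Hilbert space $\cl H$ with a cyclic vector $\eta$ satisfying $\langle \pi_s(X)\eta,\eta\rangle = s(X)$; the projections $\pi_s(e_v)$ then fulfil \eqref{eq:xi_tr_ge_1}--\eqref{eq:xi_tr_ge_3} with $t = u^{-1}$ and equality in \eqref{eq:xi_tr_ge_3}, showing that the infimum in the relaxed formulation is at most $\xi_{\tr}(G)$.

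For the reverse inequality $\xi_{\tr}(G) \leq t$ whenever $(\cl H,\eta,\cl A,(E_v))$ is feasible in the relaxed sense, I would use a tensor-product trick with the hyperfinite $\mathrm{II}_1$ factor $\cl R$ to promote the inequality in \eqref{eq:xi_tr_ge_3} to an equality. By passing to the bicommutant and the normal extension of $s$, replace $\cl A$ by a tracial von Neumann algebra $(\cl M,\tau)$ containing the $E_v$. Since $\cl R$ has projections of every trace in $[0,1]$, for each $v$ choose $p_v \in \cl R$ with $\mathrm{tr}_{\cl R}(p_v) = t^{-1}/\tau(E_v) \in (0,1]$, and set $E_v' = E_v \otimes p_v$ in the tensor product von Neumann algebra $\cl M \otimes \cl R$. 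Then $E_v' E_w' = E_v E_w \otimes p_v p_w = 0$ whenever $v \sim w$, while $(\tau \otimes \mathrm{tr})(E_v') = t^{-1}$ for every $v$. This produces a configuration for Definition~\ref{def:xi_tr} with value $u = t^{-1}$, so $\xi_{\tr}(G)^{-1} \geq t^{-1}$.

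It remains to prove attainment. Let $\cl U$ denote the universal unital C*-algebra generated by self-adjoint idempotents $\{e_v : v \in V\}$ subject to $e_v e_w = 0$ for $v \sim w$; this algebra exists because the relations admit a Hilbert-space representation via, e.g., any proper colouring of $G$. The tracial state space $T$ of $\cl U$ is weak-$*$ compact, and since $|V|$ is finite, the map $s \mapsto \min_v s(e_v)$ is weak-$*$ continuous on $T$, so its supremum $u^*$ is attained at some $s^* \in T$. By universality, every configuration as in Definition~\ref{def:xi_tr} factors through $\cl U$, hence $u^* = \xi_{\tr}(G)^{-1}$; the GNS realisation of $s^*$ then witnesses the minimum $t = \xi_{\tr}(G)$ in the relaxed formulation. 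The main obstacle is precisely this attainment step, as one must replace the \emph{a priori} unbounded family of C*-algebras in Definition~\ref{def:xi_tr} by a single compact parameter space before weak-$*$ compactness can be applied; the tensor trick with $\cl R$ is then routine once one recalls that $\cl R$ has projections of every trace in $[0,1]$.
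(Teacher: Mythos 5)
Your proof is correct, and the two main inequalities are established by essentially the same method as the paper: the GNS construction handles the easy direction, and a tensor-product trick converts the inequality $\langle E_v\eta,\eta\rangle\ge t^{-1}$ into an exact constant. The one cosmetic difference is the auxiliary algebra used in the tensor trick: you tensor with the hyperfinite ${\rm II}_1$ factor $\cl R$, using the standard fact that its trace achieves every value in $[0,1]$ on projections, whereas the paper tensors with the commutative algebra $L^\infty(0,1)$ acting on $L^2(0,1)$ and takes explicit spectral projections $P_{c/c_v}$ (with $c=\min_v s(E_v)$). The paper's choice is lighter---it stays elementary and avoids any appeal to ${\rm II}_1$ factors and their projection lattices---but both constructions do the job. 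Your optional pass to the bicommutant and normal extension of $s$ before tensoring is unnecessary (one can tensor at the C*-level, since $\cl R$ is nuclear and $s\otimes\tr$ is then automatically a trace on $\cl A\otimes_{\min}\cl R$), though it is not wrong.

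Where your proof is a genuine improvement is the attainment step. The theorem asserts a \emph{minimum} (and Definition~\ref{def:xi_tr} asserts a supremum), and the following theorem on $\xisdp(G)\le\xi_{\tr}(G)$ then picks ``$\eta$ and $E_v$ \dots\ with $t=\xi_{\tr}(G)$,'' so attainment is actually used; yet the paper's proof only shows the two infima agree. Your compactness argument fills this gap cleanly: since the constraints $e_v^*=e_v=e_v^2$ and $e_ve_w=0$ for $v\sim w$ are admissible bounded relations, the universal unital C*-algebra $\cl U$ on them exists and is nonzero (a proper colouring furnishes a representation), its tracial state space is weak-$*$ compact and nonempty (pull back the trace from a matrix-algebra quotient), and the finite minimum $s\mapsto\min_v s(e_v)$ is weak-$*$ continuous, so its supremum $u^*$ is attained. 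Every feasible configuration factors through $\cl U$, giving $u^*\ge\xi_{\tr}(G)^{-1}$, and the tensor trick applied to the optimal $s^*$ gives $u^*\le\xi_{\tr}(G)^{-1}$; the GNS data of $s^*$ then realises the minimum. The only place you should tighten the write-up is the phrase ``hence $u^*=\xi_{\tr}(G)^{-1}$,'' which as stated gives only the inequality $u^*\ge\xi_{\tr}(G)^{-1}$; the reverse inequality needs the tensor trick you prove above, applied to $s^*$.
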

\begin{proof}
    Any solution feasible for Definition~\ref{def:xi_tr} induces a solution feasible for the above conditions, 
    with the same value.  Note that, using the GNS construction, we can assume, without loss of generality, that 
    $s(X)=\langle X\eta,\eta \rangle$, $X\in \cl A$, for some unit vector $\eta$.

  Conversely, suppose we have a feasible solution to the above conditions.
  Let $c_v = s(E_v)$, where $c_v\ge t^{-1}$.
  Set $c = \min \{ c_v : v \in V(G) \}$.

Let $\hat{\cl H} = \cl H \otimes L^2(0,1)$, $\hat{\eta} = \eta \otimes \chi_{[0,1]} \in \hat{\cl H}$ 
and, for $0 \le r \le 1$, let $P_r:L^2(0,1) \to L^2(0,1)$ denote the projection onto the subspace $L^2(0,r)$.  
Let $\cl D$ be the multiplication algebra of $L^{\infty}(0,1)$ acting on $L^2(0,1)$,
$\hat{E}_v = E_v \otimes P_{c/c_v}$ and $\hat{s}(X) = \langle X \hat{\eta}, \hat{\eta} \rangle$, 
$X\in \cl A\otimes\cl D$.

The state $\hat{s}$ is tracial because it is the tensor product of two tracial states.
It is easily verified that this new family of projections $\hat{E}_v$ and state $\hat{s}$
satisfy the conditions of Definition \ref{def:xi_tr} with
$\hat{s}(E_v)=r^{-1}$ where $r^{-1} = c \ge t^{-1}$, so that $r \le t$.

Thus, we attain the same infimum if we require equality in \eqref{eq:xi_tr_ge_3} for all $v$.
\end{proof}

We shall refer to a vector and set of operators satisfying (\ref{eq:xi_tr_ge_1})--(\ref{eq:xi_tr_ge_3}) for some $t$ 
a {\it feasible set for $\xi_{\tr}(G)$ with value $t$.}
The following is an adaptation of a proof from~\cite{cmrssw}.

\begin{theorem}
    $\xi_{\tr}(G \djp H) = \xi_{\tr}(G[H]) = \xi_{\tr}(G) \xi_{\tr}(H)$, where $G * H$ is the disjunctive product
    (co-normal product, OR product) and $G[H]$ is the lexicographical product.
\end{theorem}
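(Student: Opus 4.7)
The plan is to establish the three inequalities
\[\xi_{\tr}(G)\xi_{\tr}(H) \le \xi_{\tr}(G[H]) \le \xi_{\tr}(G\djp H) \le \xi_{\tr}(G)\xi_{\tr}(H),\]
which together give both equalities. The middle inequality is immediate from the definitions: every edge of $G[H]$ is an edge of $G\djp H$ (the two products agree when $v_1 = v_2$ and whenever $v_1 \sim_G v_2$, while $G[H]$ has strictly fewer edges between distinct non-$G$-adjacent vertices), so any family feasible for $\xi_{\tr}(G\djp H)$ already satisfies the (weaker) set of orthogonality constraints required for $\xi_{\tr}(G[H])$.

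For the rightmost inequality I would use a tensor-product construction. Given $\varepsilon > 0$, choose feasible tuples $(\cl A, s, \{E_v\}_{v \in V(G)})$ and $(\cl B, \tau, \{F_h\}_{h \in V(H)})$ with values $t_1 \le \xi_{\tr}(G)+\varepsilon$ and $t_2 \le \xi_{\tr}(H)+\varepsilon$. Then $s \otimes \tau$ is a tracial state on $\cl A \omin \cl B$, and $P_{v,h} := E_v \otimes F_h$ is a projection with $(s \otimes \tau)(P_{v,h}) = (t_1 t_2)^{-1}$. Adjacency $(v_1,h_1) \sim_{G\djp H} (v_2,h_2)$ forces $v_1 \sim_G v_2$ or $h_1 \sim_H h_2$, and each case individually produces $P_{v_1,h_1}P_{v_2,h_2}=0$. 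Hence $\{P_{v,h}\}$ is feasible for $\xi_{\tr}(G\djp H)$ with value $t_1 t_2$; letting $\varepsilon \to 0$ gives the bound.

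The leftmost (crucial) inequality proceeds as follows. Fix a feasible family $(\cl A, s, \{P_{v,h}\})$ for $\xi_{\tr}(G[H])$ with $s(P_{v,h}) = t^{-1}$ and $t$ arbitrarily close to $\xi_{\tr}(G[H])$. Pass to the tracial GNS representation to embed $\cl A \hookrightarrow \cl M := \cl A''$, a finite von Neumann algebra in which the vector state extends to a faithful normal tracial state $\bar s$; inside $\cl M$ the supremum $E_v := \bigvee_h P_{v,h}$ is a bona fide projection. When $v_1 \sim_G v_2$, every pair $(v_1,h_1),(v_2,h_2)$ is a $G[H]$-edge, so $P_{v_1,h_1}P_{v_2,h_2}=0$ for all $h_1,h_2$; the ranges of $E_{v_1}$ and $E_{v_2}$ are therefore orthogonal, giving $E_{v_1}E_{v_2}=0$. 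For each $v$, the corner $E_v \cl M E_v$ is a unital von Neumann algebra with normalised tracial state $s_v(X) := \bar s(X)/\bar s(E_v)$; the collection $\{P_{v,h}\}_h$ lies inside this corner and is feasible for $H$ (since $P_{v,h_1}P_{v,h_2}=0$ when $h_1 \sim_H h_2$), with $s_v(P_{v,h}) = t^{-1}/\bar s(E_v)$. Theorem~\ref{thm:xi_tr_ge} applied to this slice yields $\bar s(E_v) \ge \xi_{\tr}(H)/t$, uniformly in $v$. A second application of Theorem~\ref{thm:xi_tr_ge} to $\{E_v\}_v$ in $(\cl M, \bar s)$ then gives $\xi_{\tr}(G) \le t/\xi_{\tr}(H)$, i.e.\ $t \ge \xi_{\tr}(G)\xi_{\tr}(H)$; letting $t \to \xi_{\tr}(G[H])$ completes the proof.

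The main obstacle is the formation of the join $\bigvee_h P_{v,h}$, which forces us to leave the C*-algebra $\cl A$ and work inside its enveloping tracial von Neumann algebra $\cl A''$. Once this is set up, the argument hinges on applying Theorem~\ref{thm:xi_tr_ge} twice, once to the corner $E_v \cl M E_v$ to obtain a lower bound on $\bar s(E_v)$ and once to $(\cl M, \bar s)$ with the projections $E_v$, so that the factor $\xi_{\tr}(H)$ extracted from the slices combines multiplicatively with $\xi_{\tr}(G)$.
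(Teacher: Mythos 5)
Your proof is correct and follows essentially the same route as the paper: form the join projections $E_v = \bigvee_h P_{v,h}$ in the enveloping von Neumann algebra (where the trace extends), observe they are feasible for $G$, and then use the compressed trace on a corner $E_v\cl M E_v$ to extract feasibility for $H$, invoking Theorem~\ref{thm:xi_tr_ge} to dispatch the resulting inequalities. The only cosmetic difference is bookkeeping: the paper picks the single vertex $f$ realising $\min_g s(\tilde E_g)$ and applies the corner argument only there, whereas you bound $\bar s(E_v)\ge \xi_{\tr}(H)/t$ uniformly over all $v$ before a single application of Theorem~\ref{thm:xi_tr_ge} for $G$ — the two presentations are logically interchangeable.
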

\begin{proof}
    The inequality $\xi_{\tr}(G[H]) \le \xi_{\tr}(G \djp H)$
    follows from the inclusion $G[H] \subseteq G \djp H$.

    To prove the inequality, $\xi_{\tr}(G \djp H) \le \xi_{\tr}(G) \xi_{\tr}(H)$,
    let $\eta$ and $E_v$ form a set feasible for $\xi_{\tr}(G)$ with value $t$
    and let $\eta'$ and $E'_{v'}$ form a set feasible for $\xi_{\tr}(H)$ with value $t'$.
    Then $\eta \ot \eta'$ and $E_v \ot E'_{v'}$
    form a set feasible for $\xi_{\tr}(G \djp H)$ with value $tt'$.

    {\bf $\xi_{\tr}(G) \xi_{\tr}(H) \le \xi_{\tr}(G[H])$:}
    Let $\eta$ and $E_{g,h}$ be a feasible set for $\xi_{\tr}(G[H])$ with value $t$, where $g \in V(G)$ and $h \in V(H)$.
    For $g \in V(G)$ define $\tilde{E}_g$ to be the projection onto the span of the ranges of
    $\{ E_{g,h} \}_{h \in H}$.

If $g \sim g'$ in $G$ then $(g,h) \sim (g', h')$ for every $h,h' \in V(H)$ and hence
$E_{g,h} E_{g',h'} =0$.  From this it follows that $g \sim g'$ implies that $\tilde{E}_g \tilde{E}_{g'}=0$.
Recall that $\tilde{E}_g$ is the strong limit of $\big( \sum_h E_{g,h} \big)^{1/n}$ and hence
$X\to s(X)=\langle X\eta, \eta \rangle$ is a tracial state on the C*-algebra generated by the $\tilde{E}_g$
(indeed, the latter C*-algebra is a subalgebra of the von Neumann algebra generated by 
the set $\{E_{g,h} : g\in G, h \in H\}$). 
Thus, $\eta$ and $\tilde{E}_g$ satisfy all the conditions to be a feasible set for $G$. Set $c_g=s(\tilde{E}_g)$ and let $r^{-1}= \min \{ c_g: g \in V(G) \}=c_f$ for some $f \in V(G)$ so that we have a feasible set for $G$ with value $r$.

Let $\tilde{\eta}=\sqrt{r} \tilde{E}_f \eta$
and $\tilde{s}(X)=\langle \tilde{\eta}, X \tilde{\eta} \rangle=r s(\tilde{E}_f X \tilde{E}_f)$.
Then $\tilde{s}$ is a state, tracial on the algebra generated by $\{E_{f,h} : h \in V(H)\}$.
Since $\tilde{s}(E_{f,h}) = r s(E_{f,h}) \ge rt^{-1} = (r^{-1} t)^{-1}$,
we see that $\tilde{\eta}$ and $\{\tilde{E}_{f,h}\}_{h \in V(H)}$
is a feasible set for $\xi_{\tr}(H)$ with value $r^{-1} t$.

Thus,
\[ 
  \xi_{\tr}(G) \xi_{\tr}(H) \le r \cdot (r^{-1}t) = t,
\]
and since $t$ was an arbitrary feasible value for $G[H]$, we have
\[
  \xi_{\tr}(G) \xi_{\tr}(H) \le \xi_{\tr}(G[H]).
\]
\end{proof}

\section{A SDP lower bound for the commuting quantum chromatic number}

We now explore a quantity which can be seen either as a semidefinite relaxation of the
quantity $\xi_{\tr}(G)$ or as a strengthening of the quantity $\thpbar(G)$.
As before, we assume that $|V(G)| = n$. For a matrix $Y\in M_{n+1}(\bb{R})$, 
we index the first row and column by $0$, and the rest -- by the elements of the set $V(G)$.  Since we are discussing a semidefinite programming problem in this section, we shall use the more familiar $Y \succeq 0$ to indicate that the matrix $Y$ is positive semidefinite.

\begin{definition}
    Define
    \begin{align}
        \notag \xisdp(G) = \min\Big\{ Y_{00} &\; |
           \, \exists \, Y \in M_{n+1}(\mathbb{R}), Y \succeq 0, Y_{vw} \ge 0,
        \\ &\; \notag Y_{0v} = Y_{vv} = 1 \textrm{ for } v \in V(G),
        \\ &\; \notag Y_{vw} = 0 \textrm{ for } v \sim w,
        \\ &\; \notag \sum_{v \in S} Y_{vw} \le 1
            \textrm{ for $S$ a clique of $G$ and } w \in V(G),
        \\ &\; Y_{00} + \sum_{v \in S} \sum_{w \in T} Y_{vw} \ge \abs{S} + \abs{T}
            \textrm{ for $S, T$ cliques of $G$} \Big\}.
        \label{eq:xi_sdp}
    \end{align}
\end{definition}

\begin{remark} If in the above definition we assumed instead that $Y$ is a complex matrix that satisfied the remaining 
  conditions, then since $Y=Y^*$ we see that $\frac{Y + Y^t}{2}$ is a
  real matrix that also satisfies these equations and has the same
  value for the $(0,0)$-entry. Thus, we obtain the same value for $\xisdp(G)$ if we
  require $Y \in M_{n+1}(\bb R)$ or $Y \in M_{n+1}(\bb C)$.
\end{remark}

\begin{theorem}
We have that
    $\thpbar(G) \le \xisdp(G) \le \xi_{\tr}(G)$.
\end{theorem}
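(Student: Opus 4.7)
The plan is to treat the two inequalities separately. The first inequality $\thpbar(G) \le \xisdp(G)$ is immediate from the standard SDP characterization
\[
  \thpbar(G) = \min\bigl\{Y_{00} : Y \in M_{n+1}(\bb R),\, Y \succeq 0,\, Y_{0v} = Y_{vv} = 1,\, Y_{vw} = 0 \text{ for } v \sim w,\, Y_{ij} \ge 0\bigr\}
\]
of Szegedy's parameter: the SDP defining $\xisdp(G)$ retains all of these constraints and imposes, on top, the two additional families of linear inequalities involving cliques of $G$. A smaller feasible set can only raise the minimum, giving the first inequality.

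For $\xisdp(G) \le \xi_{\tr}(G)$, I would fix any value $t$ feasible for $\xi_{\tr}(G)$; by Definition~\ref{def:xi_tr} this yields a unital C*-algebra $\cl A$ with a tracial state $s$ and projections $\{E_v : v \in V\} \subseteq \cl A$ such that $E_v E_w = 0$ whenever $v \sim w$ and $s(E_v) = u := 1/t$ for every $v$. Realizing $s(X) = \langle X\eta,\eta\rangle$ via GNS, I define $Y$ to be the Gram matrix of the vectors $\eta/\sqrt{u}$ (index $0$) and $E_v\eta/\sqrt{u}$ (index $v \in V$). Explicitly $Y_{00} = 1/u = t$, $Y_{0v} = Y_{vv} = 1$, and $Y_{vw} = s(E_vE_w)/u$; the entries are real because traciality gives $\overline{s(E_vE_w)} = s(E_wE_v) = s(E_vE_w)$. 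Positive semidefiniteness is built into the Gram construction; $Y_{vw} = 0$ on edges of $G$ follows from $E_vE_w = 0$; entrywise nonnegativity $Y_{vw} \ge 0$ follows from $s(E_vE_w) = s(E_wE_vE_w) \ge 0$, using traciality and positivity of $E_wE_vE_w$. For a clique $S \subseteq V$ the projections $\{E_v\}_{v\in S}$ are pairwise orthogonal, so $P_S := \sum_{v\in S}E_v$ is itself a projection with $P_S \le I$; traciality together with $E_w^2 = E_w$ then gives $s(E_wP_S) = s(E_wP_SE_w) \le s(E_w) = u$, that is, $\sum_{v\in S}Y_{vw} \le 1$.

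The delicate step, which I expect to be the main obstacle, is the inequality $Y_{00} + \sum_{v \in S, w \in T} Y_{vw} \ge |S|+|T|$ for cliques $S, T$ of $G$. My plan is to exploit positivity of the operator $(I-P)(I-Q)(I-P)$, where $P := P_S$ and $Q := P_T$. Since $I-P$ and $I-Q$ are projections, $(I-P)(I-Q)(I-P) \succeq 0$ as an operator. Traciality together with the idempotence $(I-P)^2 = I - P$ then collapses the cubic expression to a quadratic:
\[
  0 \le s\bigl((I-P)(I-Q)(I-P)\bigr) = s\bigl((I-P)(I-Q)\bigr) = 1 - u|S| - u|T| + s(PQ).
\]
Rearranging gives $1 + s(PQ) \ge u(|S|+|T|)$, and dividing by $u$ yields the required inequality. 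Hence $Y$ is feasible for $\xisdp(G)$ with value $t$, so $\xisdp(G) \le t$; taking the infimum over feasible $t$ completes the proof.
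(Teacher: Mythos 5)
Your argument is correct, and the second inequality follows the paper's proof essentially verbatim: you build the same Gram matrix $Y_{vw}=t\,s(E_vE_w)$ from a minimal realisation, use traciality and $E_w^2=E_w$ to reduce the clique constraints to positivity of $E_w(I-P_S)E_w$ and of $(I-P_S)(I-P_T)(I-P_S)$, respectively, and conclude. (You work directly from Definition~\ref{def:xi_tr} and take an infimum over feasible $t$, whereas the paper invokes Theorem~\ref{thm:xi_tr_ge} to assume equality of the marginals; both are fine, and your route is arguably a touch cleaner since Theorem~\ref{thm:xi_tr_ge} is not needed here.)

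The first inequality is where you deviate slightly in packaging. You cite the Gram-matrix SDP formulation
\[
\thpbar(G)=\min\bigl\{Y_{00} : Y\succeq 0,\ Y_{0v}=Y_{vv}=1,\ Y_{vw}=0\ (v\sim w),\ Y_{vw}\ge 0\bigr\}
\]
and observe that $\xisdp$ simply adds the two clique constraint families, which can only raise the optimum. This formulation is indeed equivalent to Szegedy's $\vartheta^+(\Gc)$, but it is asserted without proof; the paper instead works with the form stated in \cite{cmrssw} and exhibits the reduction explicitly, via a Schur-complement/Cholesky argument showing that $Z:=Y_{00}M-J$ (with $M$ the $V\times V$ block) is feasible for $\thpbar(G)$ with value $Y_{00}$. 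The two are mathematically the same transformation; your version is cleaner to state but less self-contained, so if this were submitted one would either want a citation for your formulation or the one-paragraph Schur-complement verification that the paper supplies.
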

\begin{proof}
    We first show that $\thpbar(G) \le \xisdp(G)$.    
    Let $\evec \in \mathbb{R}^{\abs{V(G)}}$ be the vector all of whose entries are equal to $1$, 
    and $J$ be the matrix all of whose entries are equal to $1$.
    Let $Y$ satisfy conditions~\eqref{eq:xi_sdp}.
    Then $Y$ has the block form
    \begin{align*}
        Y =
        \begin{pmatrix}
            Y_{00} & \evec^T \\ \evec & M
        \end{pmatrix}.
    \end{align*}
    By Cholesky's Lemma, $Y\succeq 0$ if and only if
    $M \succeq Y_{00}^{-1} J$. 
    Define $Z = Y_{00} M - J$; then $Z \succeq 0$, $Z_{vw} \ge -1$ for all $v,w\in V(G)$, 
    $Z_{vw} = -1$ whenever $v \sim w$,
    and $Z_{vv} = Y_{00}-1$ for all $v\in V(G)$.
    So $Z$ is feasible for $\thpbar(G)$ with value $Y_{00}$ (see \cite{cmrssw} for the definition of 
    $\thpbar(G)$). 

    We next show that $\xisdp(G) \le \xi_{\tr}(G)$.
    Let $\eta$ and $E_v$, $v\in V(G)$ be as in Theorem \ref{thm:xi_tr_ge},
    with $t=\xi_{\tr}(G)$.
    Note that by the proof of Theorem~\ref{thm:xi_tr_ge} we can assume equality
    in~\eqref{eq:xi_tr_ge_3}.
    Set $\eta_v = E_v \eta$, $v\in V(G)$, 
    and $Y \in M_{n+1}(\mathbb{R})$
    as $Y_{vw} = t \langle \eta_v, \eta_w \rangle$, where we take $\eta_0 =\eta$.
    We have that $Y_{vw} = ts(E_v E_w)$, $Y_{0v} = ts(E_v) = 1$, and $Y_{00}=ts(I)=t$.

    Conditions \eqref{eq:xi_tr_ge_1}--\eqref{eq:xi_tr_ge_3}, along with the fact that $E_v^2=E_v$, give
    \begin{align*}
        &\; Y_{vv} = ts(E_v E_v) = Y_{0v} = 1
        \\ &\; Y_{vw} = ts(E_v E_w) = 0 \textrm{ for } v \sim w.
    \end{align*}

    Consider a clique $S$ of $G$.
    We have that $E_v \perp E_w$ when $v \sim w$, so $I - \sum_{v \in S} E_v$ is a projection.
    For $S$ a clique of $G$ and $w \in V(G)$,
    \begin{align*}
        1 - \sum_{v \in S} Y_{vw} &= ts(E_w) - \sum_{v \in S} ts(E_v E_w)
        \\ &= ts((I - \sum_{v \in S} E_v) E_w)
        \\ &= ts(E_w (I - \sum_{v \in S} E_v) E_w) \ge 0,
    \end{align*}
    where in the last step we used that $s$ is a tracial state.
    Similarly, for $S,T$ cliques of $G$,
    \begin{align*}
        Y_{00} + \sum_{v \in S} &\sum_{w \in T} Y_{vw} - \abs{S} - \abs{T}
        \\ &= ts(( I - \sum_{v \in S} E_v )( I - \sum_{w \in S} E_w ))
        \\ &= ts(( I - \sum_{v \in S} E_v )( I - \sum_{w \in S} E_w )( I - \sum_{v \in S} E_v ))
        \ge 0.
    \end{align*}

    So $Y$ is a feasible solution to~\eqref{eq:xi_sdp} with value $t$.
\end{proof}

We can now compute the tracial rank of an odd cycle. 

\begin{theorem}
    \label{thm:xi_tr_cycle}
    $\xi_{\tr}(C_{2k+1}) = \frac{2k+1}{k}$.
\end{theorem}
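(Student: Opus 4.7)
The plan is to prove both inequalities separately. The upper bound $\xi_{\tr}(C_{2k+1})\le(2k+1)/k$ follows from Theorem~\ref{thm:xiq_is_xif} combined with the inclusion $C_{\loc}(n,c)\subseteq C_{\qc}(n,c)$: this yields $\xi_{\tr}(G)=\xi_{\qc}(G)\le \xi_{\loc}(G)=\chi_{\f}(G)$, and for the vertex-transitive graph $C_{2k+1}$ with independence number $k$ we have $\chi_{\f}(C_{2k+1})=(2k+1)/k$. Alternatively, one may exhibit an explicit realisation inside $\ell^{\infty}_{2k+1}$ with the normalised counting trace by taking $e_v$ to be the indicator function of the arc $\{kv,kv+1,\ldots,kv+k-1\}\pmod{2k+1}$ and checking that consecutive arcs are disjoint.

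For the lower bound I will invoke the preceding theorem's inequality $\xisdp(G)\le \xi_{\tr}(G)$, so it suffices to show $\xisdp(C_{2k+1})\ge(2k+1)/k$. Given a feasible $Y$ for~\eqref{eq:xi_sdp}, averaging $Y$ over the dihedral action on $V(C_{2k+1})$ preserves positive semidefiniteness, all the SDP constraints, and the value $Y_{00}$, so one may assume $Y_{vw}=f(d(v,w))$ depends only on the cyclic distance, with $f(0)=1$ and $f(1)=0$. Applying the single-edge clique constraint $Y_{uw}+Y_{u+1,w}\le 1$ with $S=\{u,u+1\}$ and letting $w$ vary yields $f(\ell)+f(\ell+1)\le 1$ for $\ell=1,\ldots,k-1$, together with $2f(k)\le 1$ from the choice $w=u+k+1$, which sits at cyclic distance $k$ from both $u$ and $u+1$. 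Pairing consecutive indices (with a minor adjustment when $k$ is even, incorporating $f(k)\le 1/2$) produces
\[ \sum_{\ell=2}^{k}f(\ell)\;\le\;\frac{k-1}{2}. \]

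Next, apply the edge--edge clique constraint $Y_{00}+\sum_{v\in S, w\in T}Y_{vw}\ge 4$ with $T=\{u+j,u+j+1\}$ for $j=1,\ldots,k$. A direct calculation of the inner sum gives $1+f(2)$ when $j=1$, $2f(j)+f(j-1)+f(j+1)$ when $2\le j\le k-1$, and $3f(k)+f(k-1)$ when $j=k$. Averaging these $k$ inequalities with equal weight $1/k$, one verifies that every $f(\ell)$ for $2\le\ell\le k$ receives coefficient exactly $4/k$ (the boundary contributions from $f(1)=0$ at $j=2$ and the $3f(k)$-term at $j=k$ are precisely what makes this collapse clean), producing
\[ t+\frac{4}{k}\sum_{\ell=2}^{k}f(\ell)\;\ge\;\frac{4k-1}{k}. \]
Subtracting $\frac{4}{k}\cdot\frac{k-1}{2}=\frac{2(k-1)}{k}$ from both sides via the previous display yields $t\ge(2k+1)/k$, as required. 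The main delicate point is the coefficient bookkeeping in this last step; the rest is routine manipulation of the SDP constraints.
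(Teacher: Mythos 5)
Your proof is correct, and the lower bound is obtained by a genuinely different argument than the paper's. The upper bound reasoning (via $\xi_{\tr}(G)\le\xi_{\loc}(G)=\chi_{\f}(G)$ and the vertex-transitivity of $C_{2k+1}$) is the same as in the paper, and your explicit realisation in $\ell^\infty_{2k+1}$ with the normalised counting trace is a nice concrete witness. For the lower bound, the paper does not symmetrize: it derives, from an edge clique constraint on $S=\{a,a+1\}$ and an edge--vertex constraint on $S=\{a+1,a+2\}$, $T=\{b\}$, the telescoping relation $Y_{a+2,b}-Y_{a,b}\ge 2-Y_{00}$, sums $k-1$ instances along the cycle, and combines with $Y_{13}\ge 3-Y_{00}$ (coming from $S=\{1,2\}$, $T=\{2,3\}$) and $Y_{n1}=0$ to get $Y_{00}\ge(2k+1)/k$ in a few lines. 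You instead average a feasible $Y$ over the dihedral automorphism group, reduce to a single function $f$ of cyclic distance, then take the uniform average of the $k$ edge--edge constraints $T=\{u+j,u+j+1\}$ and pair it with the bound $\sum_{\ell=2}^k f(\ell)\le(k-1)/2$ extracted from the edge clique constraints; the coefficient bookkeeping you flag does in fact check out (each $f(\ell)$, $2\le\ell\le k$, receives total coefficient $4$, with the boundary cases $j=1$ and $j=k$ contributing $1+f(2)$ and $3f(k)+f(k-1)$ respectively). Both proofs use only the linear constraints and hence, as the paper remarks, survive at the LP level. The paper's telescoping argument is shorter; your symmetrize-and-average argument is more systematic and makes the role of the cyclic structure more explicit, which is the sort of thing one would try on other vertex-transitive graphs.
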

\begin{proof}
    The following is inspired by a proof from~\cite{roberson2013variations}.
    Let $Y$ be feasible for~\eqref{eq:xi_sdp}.
    Considering cliques $S=\{1,2\}$ and $T=\{2,3\}$, we have
    \begin{align}
        Y_{00} + \sum_{v \in \{1,2\}} \sum_{w \in \{2,3\}} Y_{vw} \ge 4
        &\implies Y_{13} \ge 3 - Y_{00}.
        \label{eq:bound_Y13}
    \end{align}
    Let $n = 2k+1$.
    We equip the set $\{1,\dots,n\}$ with addition modulo $n$.
    For any $a,b \in \{1,\dots,n\}$, we have
    \begin{align*}
        - \sum_{v \in \{a,a+1\}} Y_{vb} \ge -1
        &\implies -Y_{a,b} -Y_{(a+1),b} \ge -1
        \\ Y_{00} + \sum_{v \in \{(a+1),(a+2)\}} \sum_{w \in \{b\}} Y_{vw} \ge 3
        &\implies Y_{(a+1),b} + Y_{(a+2),b} \ge 3-Y_{00}.
    \end{align*}
    Adding these inequalities gives
    \begin{align*}
        Y_{(a+2),b} - Y_{a,b} \ge 2-Y_{00}.
    \end{align*}
    Adding together $c$ instances of this inequality gives $Y_{(a+2c),b} - Y_{a,b} \ge c(2-Y_{00})$.
    Taking $a=3,b=1,c=k-1$ gives $Y_{n1} - Y_{31} \ge (k-1)(2-Y_{00})$.
    But $Y_{n1}=0$ since $1 \sim n$.  Adding~\eqref{eq:bound_Y13} gives
    $0 \ge (k-1)(2-Y_{00}) + (3 - Y_{00}) = (2k+1) - k Y_{00}$.
    So $Y_{00} \ge (2k+1)/k$.

Hence,
\[(2k+1)/k \le \xisdp(C_k) \le \xi_{\tr}(C_k) \le \xi_{\loc}(C_k) = \chi_{\f}(C_k).\]
And $\chi_{\f}(C_k) = (2k+1)/k$~\cite{roberson2013variations} so the result follows.
\end{proof}

Note that the preceding proof did not make use of $Y \succeq 0$.
So in fact it would even follow from an LP relaxation of the SDP.

\begin{corollary}
    There is a graph $G$ for which $\lceil \thpbar(G) \rceil < \chi_{\qc}(G)$. 
\end{corollary}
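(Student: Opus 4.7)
The plan is to exhibit a concrete graph witnessing the strict inequality, by combining the tracial-rank lower bound on $\chi_{\qc}$ with the multiplicativity of $\xi_{\tr}$ just proved. My candidate is $G = C_5 \djp C_5$, the disjunctive product of two copies of the $5$-cycle.

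First, I would combine Theorem~\ref{thm:xi_tr_cycle} (which gives $\xi_{\tr}(C_5) = 5/2$) with the preceding multiplicativity theorem to conclude
\[
  \xi_{\tr}(G) \;=\; \xi_{\tr}(C_5)^2 \;=\; 25/4.
\]
Then Theorem~\ref{th_trqc} (namely $\xi_{\tr} \le \chi_{\qc}$), together with the integrality of $\chi_{\qc}(G)$, would immediately force $\chi_{\qc}(G) \ge 7$.

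Next I would bound $\thpbar(G) = \vartheta^+(\overline G)$ from above. Since $C_5$ is self-complementary, $\overline{C_5 \djp C_5} = \overline{C_5}\boxtimes\overline{C_5} = C_5 \boxtimes C_5$, where $\boxtimes$ denotes the strong product. I would then invoke Lov\'asz's classical multiplicativity of $\vartheta$ under strong products, $\vartheta(C_5 \boxtimes C_5) = \vartheta(C_5)^2 = 5$ (the core of his Shannon-capacity computation), coupled with standard facts about Szegedy's variant: $C_5 \boxtimes C_5$ is vertex-transitive with independence number $5$, so $\chi_{\f}(C_5 \boxtimes C_5) = 25/5 = 5$, and Szegedy's sandwich inequality $\vartheta^+(H) \le \chi_{\f}(H)$ then pins down $\vartheta^+(C_5 \boxtimes C_5) = 5$. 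Hence $\lceil \thpbar(G) \rceil = 5$.

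Putting the two estimates together yields $\lceil \thpbar(G) \rceil = 5 < 7 \le \chi_{\qc}(G)$, which is the required strict separation. The only step drawing on material outside this paper is the upper bound $\vartheta^+(C_5 \boxtimes C_5) \le 5$; this would be the main obstacle in a careful write-up, but it is a standard consequence of Szegedy's inequality $\vartheta^+ \le \chi_{\f}$. Everything else --- the value $\xi_{\tr}(C_5) = 5/2$, the multiplicativity of $\xi_{\tr}$ under $\djp$, the bound $\xi_{\tr} \le \chi_{\qc}$, and the identification $\lceil \thpbar \rceil = \chi_{\vect}$ --- is already available in the excerpt.
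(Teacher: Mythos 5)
Your general plan --- combine the multiplicativity of $\xi_{\tr}$ with Theorem~\ref{thm:xi_tr_cycle} to force $\chi_{\qc}$ up, and then bound $\thpbar$ of a disjunctive product from above --- is exactly the paper's strategy (the paper uses $C_5 \djp K_3$, you use $C_5 \djp C_5$). The lower bound half of your argument, $\chi_{\qc}(C_5\djp C_5)\ge \xi_{\tr}(C_5\djp C_5)=(5/2)^2=25/4$, hence $\chi_{\qc}\ge 7$, is correct.

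The gap is in the upper bound on $\thpbar$. You identified $\thpbar(C_5\djp C_5)=\vartheta^+(\overline{C_5\djp C_5})=\vartheta^+(C_5\boxtimes C_5)$ correctly, but the inequality you then invoke, $\vartheta^+(H)\le \chi_{\f}(H)$, is not a theorem --- it is false already for $H=\overline{K_n}$ with $n\ge 2$, where $\vartheta^+(\overline{K_n})=n$ while $\chi_{\f}(\overline{K_n})=1$. The correct sandwich inequality involves the complement: $\vartheta^+(H)\le \chi_{\f}(\Hc)$, which is the content of the chain $\thpbar(G)\le \xisdp(G)\le\cdots\le\chi_{\f}(G)$ stated at the end of the paper. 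Applied here it gives only $\thpbar(C_5\djp C_5)\le\chi_{\f}(C_5\djp C_5)=25/\alpha(C_5\djp C_5)=25/4$, whose ceiling is $7$, which does not beat $\chi_{\qc}\ge 7$. Your example and the intended value $\thpbar(C_5\djp C_5)=5$ can be salvaged, but by a different route: multiplicativity of $\thpbar$ under $\djp$ (\cite[Theorem~17]{cmrssw}, which the paper's own proof cites) together with $\thpbar(C_5)=\sqrt 5$ gives $\thpbar(C_5\djp C_5)=(\sqrt5)^2=5<7$. The paper instead takes $G=C_5\djp K_3$, obtaining $\lceil\thpbar(G)\rceil=\lceil 3\sqrt5\rceil=7$ and $\chi_{\qc}(G)\ge\xi_{\tr}(C_5)\xi_{\tr}(K_3)=15/2$, hence $\chi_{\qc}(G)\ge 8$, so $7<8$.
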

\begin{proof} By \cite[Theorem~17]{cmrssw},
\[\lceil \thpbar(C_5 \djp K_3) \rceil = \lceil \thpbar(C_5) \thpbar(K_3) \rceil =\lceil 3 \sqrt{5} \rceil = 7,\]
while
\[7<\frac{15}{2} = \xi_{\tr}(K_3) \xi_{\tr}(C_5) = \xi_{\tr}(C_5 \djp K_3) \le \chi_{\qc}(C_5 \djp K_3) \le \chi(C_5 \djp K_3) =8.\]
\end{proof}

For all graphs $G$ we have
\[ \xisdp(G) \le \xi_{\tr}(G) \le \xi_{\f}(G) \le \chi_{\f}(G). \]
In the proof of Theorem~\ref{thm:xi_tr_cycle} we see that for odd cycles
$\xisdp(G)=\chi_{\f}(G)$, so this chain of inequalities collapses.
Numerical results show that, in fact, $\xisdp(G)=\chi_{\f}(G)$ for all graphs on 9 vertices or
less.  So now we know the value of $\xi_{\f}$ on all of these graphs (it is equal to
$\chi_{\f}$) whereas before the only nontrivial graphs for which this quantity was known were
the Kneser graphs and odd cycles.



\section*{Acknowledgement}
The authors would like to thank the referee for a diligent reading that improved the exposition considerably.

\end{document}